\providecommand{\U}[1]{\protect\rule{.1in}{.1in}}
\providecommand{\U}[1]{\protect \rule{.1in}{.1in}}
\newtheorem{theorem}{Theorem}[section]
\newtheorem{lemma}[theorem]{Lemma}
\newtheorem{proposition}[theorem]{Proposition}
\newenvironment{proof}[1][Proof]{\noindent \textbf{#1.} }{\  \rule{0.5em}{0.5em}}
\numberwithin{equation}{section}
\begin{document}

\title{On Two Diophantine Inequalities Over Primes}

\author{Min Zhang\footnotemark[1]\,\,\,\, \, \& \,\, Jinjiang Li\footnotemark[2]    \vspace*{-4mm} \\
$\textrm{\small Department of Mathematics, China University of Mining and Technology}^{*\,\dag}$
                    \vspace*{-4mm} \\
     \small  Beijing 100083, P. R. China  }

\footnotetext[2]{Corresponding author. \\
    \quad\,\, \textit{ E-mail addresses}: \href{mailto:min.zhang.math@gmail.com}{min.zhang.math@gmail.com} (M. Zhang),
     \href{mailto:jinjiang.li.math@gmail.com}{jinjiang.li.math@gmail.com} (J. Li).   }

\date{}
\maketitle


{\textbf{Abstract}}: Let~$1<c<37/18,\,c\neq2$~and~$N$~be a sufficiently large real number. In this paper, we prove
that, for almost all~$R\in(N,2N],$~
the Diophantine inequality~$|p_1^c+p_2^c+p_3^c-R|<\log^{-1}N$~is solvable in primes~$p_1,\,p_2,\,p_3.$~
Moreover, we also investigate the problem of six primes and prove that the
Diophantine inequality~$|p_1^c+p_2^c+p_3^c+p_4^c+p_5^c+p_6^c-N|<\log^{-1}N$~is solvable
in primes~$p_1,\,p_2,\,p_3,\,p_4,\,p_5,\,p_6$~for sufficiently large real number~$N$.~

{\textbf{Keywords}}: Diophantine inequality; Waring-Goldbach problem; prime number; exponential sum

{\textbf{MR(2010) Subject Classification}}: 11L20, 11P05, 11P55

\section{Introduction and main result}

    In 1952,~Piatetski-Shapiro \cite{Piatetski-Shapiro} considered the following analogue of the
Waring-Goldbach problem.
   Assume that~$c>1$~is not an integer and let~$\varepsilon$~be a positive number. If~$r$~is a sufficiently large
integer~(depending only on~$c$),~then the inequality
\begin{equation}\label{Introduction-1}
   |p_1^c+p_2^c+\cdots+p_r^c-N|<\varepsilon
\end{equation}
has a solution in prime numbers $p_1,p_2,\cdots,p_r$ for sufficiently large $N$. More precisely, if the least $r$ such that (\ref{Introduction-1}) has a solution in prime numbers for every $\varepsilon>0$ and $N>N_0(c,\varepsilon)$ is denoted by $H(c),$~then it is proved in \cite{Piatetski-Shapiro} that
\begin{equation*}
  \limsup_{c\to\infty}\frac{H(c)}{c\log c}\leqslant 4.
\end{equation*}

  In \cite{Piatetski-Shapiro}, Piatetski-Shapiro also proved that if $1<c<3/2,$ then $H(c)\leqslant5.$ The upper
bound $3/2$ for $c$ was improved successively to
\begin{equation*}
  \frac{14142}{8923}=1.5848\cdots, \frac{1+\sqrt{5}}{2}=1.6180\cdots, \frac{81}{40}=2.025,\frac{108}{53}=2.0377\cdots, 2.041
\end{equation*}
by Zhai and Cao \cite{Zhai-Cao-1}, Garaev \cite{Garaev}, Zhai and Cao \cite{Zhai-Cao-2}, Shi and Liu \cite{Shi-Liu},
Baker and Weingartner \cite{Baker-Weingartner-1}, respectively.

  On the other hand, the Vinogradov-Goldbach theorem \cite{I. M. Vinogradov} suggests that at least for~$c$~close to~$1,$~one should expect~$H(c)\leqslant3.$~The first result in this direction was obtained by D. I. Tolev~\cite{D. I. Tolev}, who showed that the inequality
\begin{equation}\label{Introduction-2}
   |p_1^c+p_2^c+p_3^c-N|<\varepsilon
\end{equation}
with~$\varepsilon=N^{-(1/c)(15/14-c)}\log^9N$~is solvable in primes~$p_1,p_2,p_3,$~provided
that~$1<c<15/14$~and~$N$~is sufficiently large. Later, Tolev's range was enlarged to $1<c<13/12$ in Cai~\cite{Cai-1}, $1<c<11/10 $ in Cai~\cite{Cai-2} and Kumchev-Nedeva~\cite{Kumchev-Nedeva}~independently,~$1<c<237/214$~in~Cao~and~Zhai~\cite{Cao},~$1<c<61/55$~in Kumchev~\cite{Kumchev},~$1<c<10/9$ in Baker and~Weingartner~\cite{Baker-Weingartner-2}.

  Laporta \cite{Laporta} studied the corresponding binary problem, which can be viewed as an inequality analogue of the Goldbach's conjecture for even numbers. Suppose~$1<c<15/14$~fixed,~$N$~a large real number
  and~$\varepsilon=N^{1-15/(14c)}\log^8N$. Then Laporta proved that the inequality
\begin{equation}\label{Introduction-3}
  |p_1^c+p_2^c-R|<\varepsilon
\end{equation}
is solvable for all $R\in(N,2N]\setminus \mathfrak{A}$ with
$|\mathfrak{A}|\ll N\exp\left(-\frac{1}{3}\left(\frac{\log N}{c}\right)^{1/5}\right).$
Zhai and Cao \cite{Zhai-Cao-3} improved Laporta's \cite{Laporta} result and proved for $1<c<43/36$ fixed and for all
$R\in(N,2N]\setminus\mathfrak{A}$ with $|\mathfrak{A}|\ll N\exp\left(-\frac{1}{3}\left(\frac{\log N}{c}\right)^{1/5}\right)$, the inequality (\ref{Introduction-3}) is solvable with primes $p_1,p_2\leqslant N^{1/c}$
and $\varepsilon=N^{1-43/(36c)}.$

   In this paper we shall prove the following two Theorems.

\begin{theorem}\label{Theorem-exceptional}
   Let $1<c<37/18,\,c\neq2\,\textrm{and~$N$~be a sufficiently large real number}.$ Then for all $R\in(N,2N]\setminus\mathfrak{P}$ with
   $$|\mathfrak{P}|\ll N\exp\left(-\frac{2}{15}\left(\frac{1}{c}\log\frac{2N}{3}\right)^{1/5}\right),$$
    the inequality
  \begin{equation}\label{Introduction-5}
     |p_1^c+p_2^c+p_3^c-R|<\log^{-1}N
  \end{equation}
  is solvable in three prime variables~$p_1,p_2,p_3,$~where~$\eta$~is sufficiently small positive number.
\end{theorem}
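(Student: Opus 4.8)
\bigskip

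\noindent\textit{Proof strategy.} The plan is to prove Theorem~\ref{Theorem-exceptional} by the Davenport--Heilbronn form of the circle method, extracting the exceptional set $\mathfrak{P}$ from a mean--value estimate in the variable $R$. Put $\varepsilon=\log^{-1}N$ and $P=(2N/3)^{1/c}$, and fix a constant $\eta>0$ small enough that, as $p_1,p_2,p_3$ run over the primes in $(\eta P,P]$, the sum $p_1^c+p_2^c+p_3^c$ runs over an interval strictly containing $(N,2N]$. For $\alpha\in\bR$ set (with $e(\theta)=e^{2\pi i\theta}$)
\[
f(\alpha)=\sum_{\eta P<p\le P}(\log p)\,e(\alpha p^{c}),\qquad K(\alpha)=\left(\frac{\sin(\pi\varepsilon\alpha)}{\pi\alpha}\right)^{2},
\]
so that $\widehat K(u):=\int_{\bR}K(\alpha)e(\alpha u)\,d\alpha=\max(0,\varepsilon-|u|)$ satisfies $\widehat K(u)\ge\varepsilon/2$ for $|u|\le\varepsilon/2$, $\widehat K(u)=0$ for $|u|\ge\varepsilon$, and $K(\alpha)\ll\min(\varepsilon^{2},|\alpha|^{-2})$. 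Then
\[
I(R):=\int_{\bR}f(\alpha)^{3}K(\alpha)e(-\alpha R)\,d\alpha=\sum_{\eta P<p_{1},p_{2},p_{3}\le P}(\log p_{1})(\log p_{2})(\log p_{3})\,\widehat K\bigl(p_{1}^{c}+p_{2}^{c}+p_{3}^{c}-R\bigr)\ge0,
\]
and $I(R)>0$ already exhibits a solution of (\ref{Introduction-5}). Hence it suffices to prove that $I(R)>0$ for every $R\in(N,2N]$ outside a set $\mathfrak{P}$ of the asserted size.

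Next I would carry out the Davenport--Heilbronn dissection $\bR=\mathfrak{M}\cup\mathfrak{m}\cup\mathfrak{t}$, where $\mathfrak{M}=\{|\alpha|\le\tau\}$ with $\tau$ just larger than $P^{-c}$, $\mathfrak{m}=\{\tau<|\alpha|\le T\}$ with $T=P^{c-1}(\log N)^{4}$, and $\mathfrak{t}=\{|\alpha|>T\}$. On $\mathfrak{M}$ the prime number theorem (with a classical zero--free region) lets one replace $f(\alpha)$ by $v(\alpha)=\int_{\eta P}^{P}e(\alpha t^{c})\,dt$; since $v(\alpha)\ll\min\bigl(P,|\alpha|^{-1}P^{1-c}\bigr)$ by one integration by parts, extending the integral to all of $\bR$ costs only $o(\varepsilon^{2}P^{3-c})$, and one gets
\[
\int_{\mathfrak{M}}f(\alpha)^{3}K(\alpha)e(-\alpha R)\,d\alpha=\int_{(\eta P,P]^{3}}\widehat K\bigl(t_{1}^{c}+t_{2}^{c}+t_{3}^{c}-R\bigr)\,dt_{1}\,dt_{2}\,dt_{3}+o\bigl(\varepsilon^{2}P^{3-c}\bigr)\gg\varepsilon^{2}P^{3-c}
\]
uniformly for $R\in(N,2N]$, the lower bound following because $R$ lies well inside the range of $t_{1}^{c}+t_{2}^{c}+t_{3}^{c}$ once $\eta$ is small. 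On $\mathfrak{t}$ one uses $K(\alpha)\ll|\alpha|^{-2}$, the trivial bound $|f(\alpha)|\le f(0)\ll P$, and the fact that the points $\{p^{c}:\eta P<p\le P\}$ are separated by $\gg P^{c-1}$; a large--sieve (Montgomery--Vaughan) inequality then gives $\int_{|\alpha|\le X}|f(\alpha)|^{2}\,d\alpha\ll XP\log P$ for $X\ge1$, and a dyadic decomposition yields $\int_{\mathfrak{t}}|f(\alpha)|^{3}|K(\alpha)|\,d\alpha\ll T^{-1}P^{2}\log P=o\bigl(\varepsilon^{2}P^{3-c}\bigr)$. Both estimates are uniform in $R$.

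The decisive term is $J(R):=\int_{\mathfrak{m}}f(\alpha)^{3}K(\alpha)e(-\alpha R)\,d\alpha$. Writing $I(R)=M(R)+J(R)+E(R)$, with $M(R)$ the major--arc integral and $E(R)$ the trivial--arc integral, we have $M(R)\gg\varepsilon^{2}P^{3-c}$ uniformly and $|E(R)|=o(\varepsilon^{2}P^{3-c})$, so $I(R)>0$ whenever $|J(R)|\le\tfrac12 M(R)$. Calling $R$ exceptional otherwise, Chebyshev's inequality gives
\[
|\mathfrak{P}|\ll(\varepsilon^{2}P^{3-c})^{-2}\int_{N}^{2N}|J(R)|^{2}\,dR,
\]
while Plancherel's theorem together with positivity of the integrand yields
\[
\int_{N}^{2N}|J(R)|^{2}\,dR\le\int_{\bR}|J(R)|^{2}\,dR=\int_{\mathfrak{m}}|f(\alpha)|^{6}K(\alpha)^{2}\,d\alpha.
\]
I would estimate this last integral by combining a sharp pointwise bound for $f$ on $\mathfrak{m}$ with mean--value estimates for $f$, in the manner of Laporta~\cite{Laporta} and Zhai--Cao~\cite{Zhai-Cao-3} for the binary problem. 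Concretely, a Vaughan--type (or Heath--Brown, or Harman--sieve) decomposition of $f$ into Type~I and Type~II sums, treated by van der Corput / exponent--pair estimates, should deliver
\[
\sup_{\alpha\in\mathfrak{m}}|f(\alpha)|\ll P\exp\Bigl(-\tfrac{1}{15}(\log P)^{1/5}\Bigr),
\]
and it is exactly the hypothesis $1<c<37/18$ that makes all the exponential sums arising here admit a saving of this quality throughout $\mathfrak{m}$. Feeding this into the chain above --- after a careful splitting of $\mathfrak{m}$ and of the moment integrals, and using the requisite bounds for the numbers of solutions of $|p_{1}^{c}+p_{2}^{c}-p_{3}^{c}-p_{4}^{c}|<\varepsilon$ and of $|p_{1}^{c}+p_{2}^{c}+p_{3}^{c}-p_{4}^{c}-p_{5}^{c}-p_{6}^{c}|<\varepsilon$ with all variables in $(\eta P,P]$ --- leads to $\int_{N}^{2N}|J(R)|^{2}\,dR\ll\varepsilon^{4}P^{6-2c}N\exp\bigl(-\tfrac{2}{15}(\log P)^{1/5}\bigr)$, whence
\[
|\mathfrak{P}|\ll N\exp\Bigl(-\tfrac{2}{15}(\log P)^{1/5}\Bigr)=N\exp\Bigl(-\tfrac{2}{15}\bigl(\tfrac{1}{c}\log\tfrac{2N}{3}\bigr)^{1/5}\Bigr),
\]
which is the bound claimed. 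For $R\in(N,2N]\setminus\mathfrak{P}$ we conclude $I(R)\ge M(R)-|J(R)|-|E(R)|\gg\varepsilon^{2}P^{3-c}>0$.

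I expect the main obstacle to be precisely this minor--arc analysis: establishing the pointwise bound for $f$ with the exponential saving $\exp\bigl(-c_{0}(\log P)^{1/5}\bigr)$ uniformly on $\mathfrak{m}$, together with the accompanying fourth-- and sixth--moment estimates, for the \emph{whole} range $1<c<37/18$ --- this is where the exponent $37/18$ is forced, after optimising the parameters in the Type~I/Type~II splitting against the available exponent pairs. The restriction $c\neq2$ is structural, not technical: for $c=2$ the numbers $p_{i}^{c}$ are integers, $p_{1}^{2}+p_{2}^{2}+p_{3}^{2}$ obeys congruence conditions, and $I(R)$ cannot be forced to be positive for almost all real $R$.
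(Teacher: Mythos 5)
Your overall framework is the same Tolev--Laporta scheme the paper uses: a kernel that detects $|p_1^c+p_2^c+p_3^c-R|<\varepsilon$, a splitting of the $\alpha$-line into a neighbourhood of $0$, an intermediate range and a tail, a mean--square estimate over $R\in(N,2N]$ for the intermediate-range contribution, and Chebyshev to extract $\mathfrak{P}$. Your cosmetic variants are fine: the Fej\'er kernel plus Plancherel replaces the Piatetski-Shapiro/Segal weight of Lemma~\ref{xiaobei} together with Laporta's Lemmas~\ref{Max-value-lemma} and~\ref{Functional-lemma}, and treating the major-arc main term uniformly in $R$ (instead of comparing $B_1(R)$ with $H(R)$ in mean square) is legitimate. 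The genuine gap is that the decisive ingredient is only postulated: the pointwise minor-arc bound for $f(\alpha)$ in the \emph{whole} range $1<c<37/18$. For $2<c<37/18$ no such estimate exists in the sources you invoke; producing it is precisely the content of the paper (Heath-Brown's identity, the Type~I estimate via exponent pairs, the Type~II estimate via Lemma~\ref{Fouvry-Iwaniec-lemma} and Baker--Weingartner's Theorem~1, assembled in Lemmas~\ref{Type-I}--\ref{S(x)estimate-2<c<2.025}), and it is there, not in the circle-method bookkeeping, that the exponent $37/18$ is forced. Saying a Vaughan/Heath-Brown decomposition ``should deliver'' the bound leaves the theorem unproved.

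Two further points in your sketch would fail as written. First, with $\mathfrak{m}=\{\tau<|\alpha|\le T\}$ and $\tau$ ``just larger than $P^{-c}$'', the claimed bound $\sup_{\mathfrak{m}}|f(\alpha)|\ll P\exp\bigl(-\tfrac{1}{15}(\log P)^{1/5}\bigr)$ is false near the bottom of $\mathfrak{m}$: there $f(\alpha)=v(\alpha)+O\bigl(P e^{-(\log P)^{1/5}}\bigr)$ and $v(\alpha)\ll\min(P,|\alpha|^{-1}P^{1-c})$ is genuinely of size about $P^{1-c}|\alpha|^{-1}$ (e.g.\ $\asymp P/\log P$ at $|\alpha|\asymp P^{-c}\log P$), with no exponential saving. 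You must take the major arc out to $\tau=P^{1-c-\eta}$, as the paper does, so that on the minor arc $F=|\alpha|P^{c}\gg P^{1-\eta}$, which is exactly the regime in which the exponent-pair/bilinear machinery of Lemmas~\ref{S(x)estimate-1<c<2} and~\ref{S(x)estimate-2<c<2.025} applies; your major-arc argument (replace $f$ by $v$, then extend to $\bR$ using $v(\alpha)\ll|\alpha|^{-1}P^{1-c}$) goes through unchanged for this larger $\tau$. Second, the asserted bound $\int_{N}^{2N}|J(R)|^{2}\,dR\ll\varepsilon^{4}P^{6-2c}N\exp\bigl(-\tfrac{2}{15}(\log P)^{1/5}\bigr)$ does not follow from an $\exp$-type saving alone: the fourth-moment input (Robert--Sargos, Lemma~\ref{Robert-lemma}) carries a factor $P^{\eta}$, which swamps $\exp\bigl(-\tfrac{2}{15}(\log P)^{1/5}\bigr)$. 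What is actually available for $1<c<37/18$, and what the paper proves and uses, is a \emph{power} saving $\sup_{\mathfrak{m}}|f|\ll P^{1-\delta}$, after which the mean-square bound holds with room to spare; note also that in the paper the $\exp\bigl(-c_{0}(\log N)^{1/5}\bigr)$ shape of $|\mathfrak{P}|$ comes from the major-arc approximation $S(x)=I(x)+O\bigl(Xe^{-(\log X)^{1/5}}\bigr)$ (Lemma~\ref{S(x)=I(x)+error}), not from the minor arc. With these repairs (larger major arc, power-saving minor-arc bound supplied by the paper's Section~2), your outline does yield the theorem.
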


\noindent
\textbf{Remark.}~The best result up to date for~$H(c)\leqslant3$~was obtained by Baker and~Weingartner~\cite{Baker-Weingartner-2}, who prove that~$1<c<10/9.$~From Theorem~\ref{Theorem-exceptional}, one can expect that the range of~$c$~for~$H(c)\leqslant3$~should be improved to~$1<c<37/18,\,c\neq2.$~Moreover, it is conjectured  that the range of~$c,$~which holds for~$H(c)\leqslant3,$~is~$1<c<3,\,c\neq2.$~Therefore, the range of~$c$~for~$H(c)\leqslant3$~has huge space to improve, though such a strong conjecture is out of reach at present.

\begin{theorem}\label{Theorem-1}
   Suppose that $1<c<37/18,\,c\neq2,$ then there exists a number $N_0(c)$ such that for each real number $N>N_0(c)$
   the inequality
   \begin{equation}\label{Introduction-4}
      |p_1^c+p_2^c+p_3^c+p_4^c+p_5^c+p_6^c-N|<\log^{-1}N
   \end{equation}
   is solvable in six prime variables~$p_1,p_2,p_3,p_4,p_5,p_6.$~
\end{theorem}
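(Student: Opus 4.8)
The plan is to establish Theorem~\ref{Theorem-1} directly by the Davenport--Heilbronn circle method for six prime variables. (One could instead split the six primes into two triples and invoke Theorem~\ref{Theorem-exceptional} on each: since $\mathfrak{P}$ has measure $o(N)$, a positive-measure set of $R$ has both $R$ and $N-R$ outside the relevant exceptional sets, and combining the two ternary solutions makes $|p_1^c+\cdots+p_6^c-N|$ small. This is tempting but the tolerance one gets is essentially twice the ternary one, so I prefer a direct argument; it is in any case easier than the ternary case, as six factors of the generating function leave much room on the minor arc.) Write $e(x)=e^{2\pi ix}$, put $\Delta=\log^{-1}N$, and choose $X=X(N,c)$ with $6X^c<N<6(2X)^c$, so that $N$ lies strictly inside the range of values of $p_1^c+\cdots+p_6^c$ with every $p_i\in(X,2X]$ (for instance $X=(N/9)^{1/c}$ works, since $2^c>3/2$). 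Put
\[
   S(\alpha)=\sum_{X<p\le 2X}(\log p)\,e(p^c\alpha),\qquad
   K(\alpha)=\left(\frac{\sin\pi\Delta\alpha}{\pi\alpha}\right)^{\!2},\qquad
   \mathcal{I}=\int_{-\infty}^{\infty}S(\alpha)^6\,e(-N\alpha)\,K(\alpha)\,d\alpha.
\]
Since $\int_{\bR}K(\alpha)e(u\alpha)\,d\alpha=\max(0,\Delta-|u|)$, Fourier inversion gives
\[
   \mathcal{I}=\sum_{p_1,\dots,p_6}(\log p_1)\cdots(\log p_6)\,\max\!\bigl(0,\ \Delta-\bigl|p_1^c+\cdots+p_6^c-N\bigr|\bigr)\ \ge\ 0,
\]
and any bound $\mathcal{I}\gg X^{6-c}/\log N$ forces an admissible tuple with $|p_1^c+\cdots+p_6^c-N|<\Delta$, which is \eqref{Introduction-4}. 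So it suffices to prove $\mathcal{I}\gg X^{6-c}/\log N$.

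Dissect $\bR=\mathfrak{M}\cup\mathfrak{m}\cup\mathfrak{t}$, with $\mathfrak{M}=\{|\alpha|\le\tau_0\}$ a small major arc ($\tau_0=X^{-c}\log^B N$ for a fixed $B$), $\mathfrak{t}=\{|\alpha|>P\}$ a trivial tail ($P$ a fixed power of $X$ with $X^6/P\ll X^{6-c}\log^{-2}N$), and $\mathfrak{m}$ the rest. On $\mathfrak{t}$, from $|S(\alpha)|\le\sum_{X<p\le 2X}\log p\ll X$ and $K(\alpha)\le(\pi\alpha)^{-2}$,
\[
   \int_{\mathfrak{t}}|S(\alpha)|^6K(\alpha)\,d\alpha\ \ll\ X^6\!\int_P^{\infty}\!\frac{d\alpha}{\alpha^2}\ =\ \frac{X^6}{P}\ =\ o\!\left(\frac{X^{6-c}}{\log N}\right).
\]
On $\mathfrak{M}$, replace $S$ by $I(\alpha):=\int_X^{2X}e(t^c\alpha)\,dt=\frac{1}{c}\int_{X^c}^{(2X)^c}e(u\alpha)u^{1/c-1}\,du$ using the prime number theorem with classical error term and partial summation (legitimate since $|\alpha|X^c\le\log^B N$ on $\mathfrak{M}$), extend $\int I(\alpha)^6e(-N\alpha)K(\alpha)$ to all of $\bR$ at the cost of a tail $O(\Delta^2X^{6-6c}\tau_0^{-5})$ (negligible, since $I(\alpha)\ll|\alpha|^{-1}X^{1-c}$), and evaluate: expanding $I(\alpha)^6$ and using the Fourier identity for $K$,
\[
   \int_{\bR}I(\alpha)^6e(-N\alpha)K(\alpha)\,d\alpha
   =\frac{1}{c^6}\int_{[X^c,(2X)^c]^6}\!\cdots\!\int\Bigl(\prod_{j=1}^{6}u_j^{1/c-1}\Bigr)\max\!\bigl(0,\ \Delta-\bigl|u_1+\cdots+u_6-N\bigr|\bigr)\,du_1\cdots du_6,
\]
which, because $N$ is interior to $(6X^c,6(2X)^c)$, equals a positive constant depending only on $c$ times $\Delta X^{6-c}$. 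Hence $\int_{\mathfrak{M}}S(\alpha)^6e(-N\alpha)K(\alpha)\,d\alpha\gg X^{6-c}/\log N$.

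The heart of the matter is $\mathfrak{m}$, where one must show $\int_{\mathfrak{m}}|S(\alpha)|^6K(\alpha)\,d\alpha=o(\Delta X^{6-c})$. The key unconditional input is the elementary mean value
\[
   \int_{-\infty}^{\infty}|S(\alpha)|^2K(\alpha)\,d\alpha
   =\sum_{p_1,p_2}(\log p_1)(\log p_2)\max\!\bigl(0,\ \Delta-|p_1^c-p_2^c|\bigr)\ \ll\ \Delta X\log X,
\]
valid because distinct $p_1,p_2\in(X,2X]$ satisfy $|p_1^c-p_2^c|\gg X^{c-1}\gg1>\Delta$ (as $c>1$), so only the diagonal $p_1=p_2$ contributes. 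Coupled with a pointwise bound $|S(\alpha)|\ll X^{1-\rho}$ on $\mathfrak{m}$ this would give $\int_{\mathfrak{m}}|S|^6K\ll(\sup_{\mathfrak{m}}|S|)^4\int_{\bR}|S|^2K\ll\Delta X^{5-4\rho}\log X=o(\Delta X^{6-c})$ as soon as $\rho>(c-1)/4$. Since no single estimate for $S(\alpha)$ is uniformly that strong, $\mathfrak{m}$ is further subdivided according to the size of $|\alpha|$: just outside $\mathfrak{M}$ one uses the decay $|S(\alpha)|\ll|\alpha|^{-1}X^{1-c}(\log X)^{O(1)}$ inherited from the integral approximation; for intermediate $|\alpha|$ one uses van der Corput's $A$- and $B$-processes (or Vinogradov's method when $c$ is close to an integer); and for the bulk one uses a power-saving Weyl-type bound with $\rho>(c-1)/4$ --- all transferred from sums over integers to sums over primes by Vaughan's or Heath-Brown's identity. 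In each subregion the contribution to $\int|S|^6K$ turns out to be $o(\Delta X^{6-c})$, and assembling the three pieces of the dissection gives $\mathcal{I}\gg X^{6-c}/\log N>0$, which proves the theorem.

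I expect the family of minor-arc exponential-sum estimates for $S(\alpha)=\sum_{X<p\le2X}(\log p)e(p^c\alpha)$ to be the main obstacle: one needs, uniformly in $\alpha$ up to $|\alpha|\asymp P$ --- and in particular for $\alpha$ close to a rational with small denominator --- a saving $\rho$ with $\rho>(c-1)/4$. These estimates pin down the hypothesis $1<c<37/18$ (beyond $37/18$ the available saving is too weak), and they are also why $c=2$ must be excluded, since for $c=2$ the sum $\sum_p e(p^2\alpha)$ exhibits main-term size near every rational with small denominator.
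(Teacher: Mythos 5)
Your overall skeleton (Davenport--Heilbronn with a kernel, dissection into a major arc, an intermediate/minor range, and a tail, plus a singular-integral lower bound of size $\Delta X^{6-c}$) matches the paper's, and your major-arc and tail treatments are fine in outline. The fatal problem is your minor-arc scheme. You propose to bound $\int_{\mathfrak m}|S|^6K$ by $(\sup_{\mathfrak m}|S|)^4\int_{\bR}|S|^2K$, where the second moment is only $\ll\Delta X\log X$ (diagonal), so you need a pointwise saving $\rho>(c-1)/4$ for $S(\alpha)=\sum_{X<p\le2X}(\log p)e(p^c\alpha)$ throughout the minor range. No such bound is available anywhere near the stated range: for $c$ just above $2$ you would need $S(\alpha)\ll X^{3/4-\eta}$, whereas the strongest known estimate in that regime (the paper's Lemma~\ref{S(x)estimate-2<c<2.025}, built from Heath-Brown's identity, Baker--Weingartner Type II bounds and Fouvry--Iwaniec) saves only $X^{\delta}$ with $\delta=c/2-1+\eta\approx0.03$; for $1<c<2$ the Zhai--Cao bound (Lemma~\ref{S(x)estimate-1<c<2}) saves at most $X^{1/15}$, so your condition $\rho>(c-1)/4$ would restrict you to roughly $c<19/15$, far short of $37/18$. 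You yourself flag the exponential-sum estimates as ``the main obstacle,'' but the obstacle is not that the known bounds need sharpening: your reduction demands a saving that is simply out of reach, so the argument as designed cannot close.

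The missing idea, and the paper's actual device, is a fourth-moment input in place of the second moment: by Robert--Sargos (Lemma~\ref{Robert-lemma}) one has $\int_{\tau<|x|<K}|S(x)|^4|\Phi(x)|\,\mathrm{d}x\ll(X^{4-c}+X^2)X^{\eta}$, so the minor-arc contribution is $\ll\sup|S|^2\cdot(X^{4-c}+X^2)X^{\eta}$ and only $\sup|S|^2\ll X^{4-c-\eta}$ is required --- i.e.\ \emph{any} power saving when $1<c<2$, and the tiny saving $c/2-1+\eta$ when $2<c<37/18$; this is exactly what pins down $37/18$. Without this (or an equivalent mean-value estimate) your plan fails; with it, your kernel $K(\alpha)$ versus the paper's smoothed $\Phi$ is an inessential difference. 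Two smaller remarks: for non-integer $c$ there are no major-arc spikes of $S(\alpha)$ at rationals, so that part of your closing discussion is off target ($c=2$ is excluded because it is an integer, where the inequality degenerates into an equation-type problem); and your discarded fallback of splitting into two ternary problems is in fact workable here, because the paper's Proposition~\ref{proposition-exceptional} is proved with the internal tolerance $\varepsilon=\log^{-2}X$, which absorbs the doubling you were worried about.
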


\textbf{Notation.} Throughout this paper,~$N$~always denotes a sufficiently large real number;~$\eta$~always denotes an arbitrary small positive constant, which may not be the same at different occurances;~$p$~always denotes a prime number;~
$n\sim N$~means~$N<n\leqslant 2N$;~$X\asymp N^{1/c},$~which is determined during each proof of the Theorems;~$\tau=X^{1-c-\eta},\,\varepsilon=\log^{-2}X,\,K=\log^5X;$~$\Lambda(n)$~denotes von Mangold's function;
~$\mu(n)$~denotes M\"{o}bius function;~$e(x)=e^{2\pi ix};$~$\mathscr{L}=\log X;\,E=\exp(-\mathscr{L}^{1/5}),$~
\begin{equation*}
P=\bigg(\frac{2}{E^2}\bigg)^{1/3}\mathscr{L},\,\,\,\,S(x)=\sum_{X/2<p\leqslant X}\log p\cdot e(p^cx),\,\,\,\,\,I(x)=\int_{\frac{X}{2}}^X e(t^cx)\mathrm{d}t.
\end{equation*}

\section{Preliminary Lemmas }

\begin{lemma}\label{xiaobei}
   Let $a,b$ be real numbers, $0<b<a/4,$ and let $k$ be a positive integer. There exists a function $\varphi(y)$
   which is $k$ times continuously differentiable and such that

  \begin{equation*}
    \left\{
      \begin{array}{cll}
          \varphi(y)=1,    & &  \textrm{for \quad} |y|\leqslant a-b, \\
          0<\varphi(y)<1,  & &  \textrm{for \quad} a-b<|y|< a+b, \\
          \varphi(y)=0,    & &  \textrm{for \quad} |y|\geqslant a+b,
      \end{array}
    \right.
  \end{equation*}
  and its Fourier transform
   \begin{equation*}
      \Phi(x)=\int_{-\infty}^{+\infty} e(-xy)\varphi(y)\mathrm{d}y
   \end{equation*}
   satisfies the inequality
   \begin{equation}
      \left|\Phi(x)\right|\leqslant\min\left(2a,\frac{1}{\pi|x|},\frac{1}{\pi|x|}\left(\frac{k}{2\pi|x|b}\right)^k\right).
   \end{equation}
\end{lemma}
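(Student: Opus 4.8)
I would realise $\varphi$ by the classical smoothing device of Vinogradov (the same one used in Tolev's work on inequalities over primes): convolve the indicator function of $[-a,a]$ with a smooth probability density supported on a short interval. Concretely, I would fix a length $\delta$ of size $\asymp b/k$, let $\rho$ be the (suitably many)-fold self-convolution of $\frac{1}{2\delta}\chi_{[-\delta,\delta]}$, and set $\varphi=\chi_{[-a,a]}*\rho$, i.e.
\[
  \varphi(y)=\int_{y-a}^{\,y+a}\rho(t)\,\mathrm{d}t .
\]
Then $\rho\geqslant0$, $\int_{\bR}\rho=1$, and $\operatorname{supp}\rho\subseteq[-b,b]$, so the three pointwise requirements are immediate: if $|y|\leqslant a-b$ then $[y-a,y+a]\supseteq[-b,b]\supseteq\operatorname{supp}\rho$, whence $\varphi(y)=\int\rho=1$; if $|y|\geqslant a+b$ the two intervals are disjoint and $\varphi(y)=0$; and for $a-b<|y|<a+b$ one gets $0<\varphi(y)<1$ because $\rho$ is strictly positive on the interior of its support. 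Smoothness is handled by taking enough box factors: a convolution of $m$ indicator functions of intervals is piecewise polynomial of degree $m-1$ and lies in $C^{m-2}$, so using $k+2$ factors in all makes $\varphi\in C^{k}$. (The hypothesis $b<a/4$ ensures $a-b>0$ and keeps the transition zone inside $|y|<a+b$.)

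\textbf{The Fourier estimate.} Here I would simply invoke the convolution theorem: $\Phi=\widehat{\chi_{[-a,a]}}\cdot\widehat{\rho}$, where $\widehat{\chi_{[-a,a]}}(x)=\frac{\sin 2\pi a x}{\pi x}$, so $|\widehat{\chi_{[-a,a]}}(x)|\leqslant\min\!\big(2a,\tfrac{1}{\pi|x|}\big)$, and $\widehat{\rho}(x)=\big(\frac{\sin 2\pi\delta x}{2\pi\delta x}\big)^{m}$, so $|\widehat{\rho}(x)|\leqslant\min\!\big(1,(2\pi\delta|x|)^{-k}\big)=\min\!\big(1,(\tfrac{k}{2\pi|x|b})^{k}\big)$ once $\delta$ is calibrated to $\asymp b/k$. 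Multiplying the two bounds and distributing the minima yields exactly
\[
  |\Phi(x)|\leqslant\min\!\left(2a,\ \frac{1}{\pi|x|},\ \frac{1}{\pi|x|}\Big(\frac{k}{2\pi|x|b}\Big)^{k}\right),
\]
the first entry also being $\Phi(0)=\int\varphi\leqslant 2a$ since $\varphi\geqslant0$. (Alternatively one avoids the convolution theorem and integrates by parts once to get $\Phi(x)=\frac{1}{2\pi i x}\int e(-xy)\varphi'(y)\,\mathrm{d}y$ with $\varphi'(y)=\rho(y+a)-\rho(y-a)$, hence $\Phi(x)=\frac{\sin 2\pi a x}{\pi x}\,\widehat{\rho}(x)$, and then extracts the factor $(k/(2\pi|x|b))^{k}$ from repeated integration by parts using bounds on the total variation of the derivatives of $\rho$.)

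\textbf{Main difficulty.} The only genuinely delicate point — and it is bookkeeping rather than a real obstacle — is to match the smoothness order, the decay exponent, and the constant in the last entry of the minimum simultaneously: a $k$-fold box convolution is only $C^{k-2}$, so honest $C^{k}$ smoothness costs a couple of extra mollifying factors, which in turn perturbs the constant inside $(k/(2\pi|x|b))^{k}$. Since $k$ is a free parameter in every application, this perturbation is harmless, but writing the construction so that the displayed inequality holds verbatim requires choosing $\delta$ and the number of convolution factors with a little care.
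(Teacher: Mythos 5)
The paper gives no proof of this lemma at all—it simply cites Piatetski-Shapiro and Segal—and your convolution construction ($\chi_{[-a,a]}$ smoothed by an iterated box kernel supported in $[-b,b]$, with the Fourier bound read off factorwise from $\left|\frac{\sin t}{t}\right|\leqslant\min\left(1,\frac{1}{|t|}\right)$) is exactly the standard Vinogradov-type argument underlying those references, so your proposal is correct and matches the intended proof in substance. The only caveat is the one you already flagged: the verbatim constant $\left(\frac{k}{2\pi|x|b}\right)^{k}$ comes from exactly $k$ equal boxes of half-width $b/k$, which yields only $C^{k-1}$ smoothness (extra mollifying factors shrink the admissible half-width below $b/k$ and slightly worsen the constant), but this off-by-one is immaterial here since the paper uses only $0\leqslant\varphi\leqslant1$, the support conditions, and the bound on $\Phi$ with $k=[\log X]$.
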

\begin{proof}
 See Piatetski-Shapiro~\cite{Piatetski-Shapiro} or Segal~\cite{Segal}.
\end{proof}

\begin{lemma}\label{Van der Corput}
    Let $G,\,F$ be twice differentiable on $[A,B],\, |G(x)|\leqslant H,\, G/F'$ monotonic. If $F'\geqslant K>0$ on $[A,B],$ then
      \begin{equation*}
         \int_{A}^BG(x)e(F(x))\mathrm{d}x\ll HK^{-1}.
      \end{equation*}
\end{lemma}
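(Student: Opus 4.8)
The plan is to establish this first-derivative estimate by a single integration by parts, exploiting the hypothesis that $F'$ stays bounded away from $0$ in order to extract oscillation from the factor $e(F(x))$. Since $F'\geqslant K>0$ throughout $[A,B]$ and $F$ is twice differentiable, the function $1/F'$ is well defined and continuously differentiable on $[A,B]$, so I may write
$$
   e(F(x))=\frac{1}{2\pi i\,F'(x)}\frac{\mathrm{d}}{\mathrm{d}x}\,e(F(x)).
$$
Substituting this into the integral and integrating by parts converts $\int_A^B G(x)e(F(x))\,\mathrm{d}x$ into a boundary contribution
$$
   \left[\frac{G(x)}{2\pi i\,F'(x)}\,e(F(x))\right]_A^B
$$
minus the integral $\dfrac{1}{2\pi i}\displaystyle\int_A^B e(F(x))\,\mathrm{d}\!\left(\dfrac{G(x)}{F'(x)}\right)$.

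First I would bound the boundary term: using $|G|\leqslant H$ and $F'\geqslant K$, each of the two endpoint contributions is at most $H/(2\pi K)$, so their difference is at most $H/(\pi K)$ in absolute value. The key step is the remaining integral. Because $G/F'$ is assumed monotonic on $[A,B]$, the differential $\mathrm{d}(G/F')$ keeps a constant sign, and hence $\int_A^B\bigl|\mathrm{d}(G/F')\bigr|$ equals the total variation $\bigl|(G/F')(B)-(G/F')(A)\bigr|$; by $|G|\leqslant H$ and $F'\geqslant K$ this is at most $2H/K$. Since $|e(F(x))|=1$, the integral term is therefore bounded by $(2\pi)^{-1}\cdot 2H/K=H/(\pi K)$. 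Adding the two pieces gives
$$
   \left|\int_A^B G(x)e(F(x))\,\mathrm{d}x\right|\leqslant \frac{2H}{\pi K}\ll HK^{-1},
$$
which is the claimed bound.

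There is no genuine analytic obstacle here; the only point requiring care is the handling of the Stieltjes integral $\int_A^B e(F)\,\mathrm{d}(G/F')$. The monotonicity hypothesis is exactly what allows me to replace $\int|\mathrm{d}(G/F')|$ by the difference of endpoint values, rather than by an unbounded total variation, and the condition $F'\geqslant K>0$ is what simultaneously legitimizes dividing by $F'$ and keeps $1/F'$ small. If one prefers to avoid Stieltjes integrals altogether, the same computation goes through with $\mathrm{d}(G/F')=(G/F')'\,\mathrm{d}x$, together with the observation that monotonicity forces $(G/F')'$ to retain a constant sign, so that $\int_A^B\bigl|(G/F')'\bigr|\,\mathrm{d}x=\bigl|(G/F')(B)-(G/F')(A)\bigr|$.
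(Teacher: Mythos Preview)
Your argument is correct and is exactly the classical first-derivative test via a single integration by parts; the paper does not supply its own proof but simply cites Titchmarsh, Lemma~4.3, whose proof is precisely the computation you carry out. There is nothing to add.
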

\begin{proof}
  See Titchmarsh \cite{Titchmarsh}, Lemma 4.3.
\end{proof}

\begin{lemma}\label{Robert-lemma}
  Suppose $M>1,c>1,\gamma>0.$ Let $\mathscr{A}(M;c,\gamma)$ denote the number of solutions of the inequality
 \begin{eqnarray*}
   |n_1^c+n_2^c-n_3^c-n_4^c|<\gamma,\qquad M<n_1,n_2,n_3,n_4\leqslant2M,
 \end{eqnarray*}
 then
 \begin{equation*}
    \mathscr{A}(M;c,\gamma)\ll(\gamma M^{4-c}+M^2)M^\eta.
 \end{equation*}
\end{lemma}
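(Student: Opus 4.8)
My plan is to majorise the count by a smoothed fourth moment of the generating exponential sum and to peel off the principal term. Put $T(x)=\sum_{M<n\leqslant 2M}e(n^cx)$, so that on expanding the fourth power
\[
|T(x)|^4=\sum_{M<n_1,n_2,n_3,n_4\leqslant 2M}e\big((n_1^c+n_2^c-n_3^c-n_4^c)x\big).
\]
Choosing in Lemma~\ref{xiaobei} the parameters $a=8\gamma/7$, $b=\gamma/7$, and $k$ a large integer depending on $\eta$, we obtain an even function $\varphi$ with $\varphi(y)\geqslant 1$ for $|y|\leqslant\gamma$, $0\leqslant\varphi\leqslant 1$, $\operatorname{supp}\varphi\subseteq[-9\gamma/7,9\gamma/7]$, whose Fourier transform $\Phi$ satisfies $|\Phi(x)|\ll\min\big(\gamma,|x|^{-1},|x|^{-1}(k/(|x|\gamma))^{k}\big)$. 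Since $\varphi$ dominates the indicator of $(-\gamma,\gamma)$, Fourier inversion gives
\[
\mathscr{A}(M;c,\gamma)\leqslant\sum_{n_1,n_2,n_3,n_4}\varphi\big(n_1^c+n_2^c-n_3^c-n_4^c\big)=\int_{-\infty}^{+\infty}\Phi(x)|T(x)|^4\,\mathrm{d}x,
\]
which is $\ll\int_{-\infty}^{+\infty}|\Phi(x)||T(x)|^4\,\mathrm{d}x$. As $\Phi$ decays faster than any fixed power of $|x|$ for $|x|\geqslant M^{\eta}\gamma^{-1}$, the range $|x|>M^{\eta}\gamma^{-1}$ contributes only $O(1)$ (using $|T|\leqslant M$), so it remains to estimate $\gamma\int_{|x|\leqslant M^{\eta}\gamma^{-1}}|T(x)|^4\,\mathrm{d}x$.

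For $|x|\leqslant M^{-c}$ one has $|T(x)|\ll M$ and this piece is $\ll M^{4}\cdot M^{-c}=M^{4-c}$, which produces the main term $\gamma M^{4-c}$. For $M^{-c}<|x|\leqslant c_0M^{1-c}$ the phase $t\mapsto t^cx$ has monotone derivative with $0<|cxt^{c-1}|\asymp|x|M^{c-1}<\tfrac12$ on $(M,2M]$; hence the first–derivative test for exponential sums (the Kusmin--Landau inequality, the sum analogue of Lemma~\ref{Van der Corput}) gives $|T(x)|\ll(|x|M^{c-1})^{-1}$, and a short computation shows that this sub-range too contributes $\ll\gamma M^{4-c}$.

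All the difficulty is concentrated in the range $c_0M^{1-c}<|x|\leqslant M^{\eta}\gamma^{-1}$; equivalently, one must bound the number of quadruples with $\{n_1,n_2\}\neq\{n_3,n_4\}$ and $|n_1^c+n_2^c-n_3^c-n_4^c|<\gamma$ by $(\gamma M^{4-c}+M^2)M^{\eta}$, the diagonal quadruples $\{n_1,n_2\}=\{n_3,n_4\}$ already accounting for the $M^2$ term. This is precisely the lemma of Robert and Sargos, and it is the step where essentially all the work lies: the naive argument — fix three variables, observe the fourth is confined to an interval of length $\asymp\gamma M^{1-c}$, and sum — loses a full factor $M$ once $\gamma<M^{c-1}$, since the unavoidable ``$+1$'' incurred for each of the $\asymp M^3$ triples swamps the expected size. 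To overcome this I would regard the quadruples as lattice points in a neighbourhood of thickness $\asymp\gamma M^{1-c}$ of the hypersurface $\xi_1^c+\xi_2^c=\xi_3^c+\xi_4^c$: the two–dimensional diagonal planes on it supply the $M^2$ term, and the remaining near–lattice points are counted by slicing along the level curves $\xi_1^c+\xi_2^c=\text{const}$ and invoking a second–derivative (van der Corput type) estimate for lattice points near a convex curve, organised so that the accumulated error terms total $O(M^{2+\eta})$ rather than the larger quantity the crudest version of that analysis would give. In practice I would invoke the Robert--Sargos estimate directly for this final range, or reproduce their counting argument; this off–diagonal count is the only nontrivial point, and the main obstacle.
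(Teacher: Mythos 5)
The paper does not prove Lemma \ref{Robert-lemma} at all: its ``proof'' is the citation of Robert and Sargos, Theorem 2, of which the lemma is a verbatim restatement. Your attempt ultimately does the same thing, and the material you place in front of the citation is not progress toward a proof. The smoothing via Lemma \ref{xiaobei}, the passage to $\int|\Phi(x)||T(x)|^4\,\mathrm{d}x$, and the treatment of the ranges $|x|\leqslant M^{-c}$, $M^{-c}<|x|\leqslant c_0M^{1-c}$ and $|x|>M^{\eta}\gamma^{-1}$ are correct but superfluous: the statement to be proved is already a pure counting statement to which the cited theorem applies directly, and in the one range you cannot handle you convert the integral back into exactly that counting statement (off-diagonal quadruples with $|n_1^c+n_2^c-n_3^c-n_4^c|<\gamma$). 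At that point you say you would ``invoke the Robert--Sargos estimate directly \dots or reproduce their counting argument.'' Invoking it is circular as a proof of the lemma (though it is what the paper itself does, without the detour); reproducing it is precisely what you have not done.

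The alternative route you sketch --- viewing the quadruples as lattice points in a slab of thickness $\asymp\gamma M^{1-c}$ around the hypersurface $\xi_1^c+\xi_2^c=\xi_3^c+\xi_4^c$, slicing along level curves and applying a second-derivative lattice-point estimate --- is the step that would fail as written, or at least the step where all of the work lies and none is supplied. As you yourself observe, the trivial slicing loses a factor $M$ once $\gamma<M^{c-1}$, and you give no mechanism by which the accumulated ``$+1$'' errors over $\asymp M^3$ slices are compressed to $O(M^{2+\eta})$; classical results on lattice points near a convex curve do not achieve this uniformly in a thickness parameter that may be far below every scale those results control. The actual argument of Robert and Sargos rests on the double large sieve (in the Bombieri--Iwaniec/Fouvry--Iwaniec form) together with a spacing estimate for the values $n_1^c+n_2^c$, machinery that appears nowhere in your sketch. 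Judged as a blind proof, the proposal therefore has a genuine gap: the entire content of the lemma, the off-diagonal bound, is deferred to the reference rather than proved.
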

\begin{proof}
 See Robert and Sargos \cite{Robert-Sargos}, Theorem 2.
\end{proof}

\begin{lemma}\label{6-power-zhuxiang-low-bound}
   For $1<c<3,c\neq2,$
   we have
     \begin{equation*}
        \int_{-\infty}^{+\infty} I^6(x)e(-xN)\Phi(x)\mathrm{d}x\gg\varepsilon X^{6-c}.
     \end{equation*}
\end{lemma}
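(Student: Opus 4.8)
The plan is to evaluate the integral by first restricting to a short interval around the origin, where $I(x)$ is large, and then bounding the contribution of the tail. Write the integral as $\int_{|x|\le\tau}+\int_{|x|>\tau}$ with $\tau=X^{1-c-\eta}$ as in the notation. On the minor arc $|x|>\tau$ I would use the estimate from Lemma~\ref{Van der Corput}: since $I(x)=\int_{X/2}^{X}e(t^cx)\,\mathrm{d}t$ has phase derivative $c t^{c-1}x$, which is $\gg X^{c-1}|x|$ in absolute value on the range of integration, Lemma~\ref{Van der Corput} (with $G\equiv1$, $H=1$) gives $|I(x)|\ll X^{1-c}|x|^{-1}$. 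Hence $|I(x)|^6\ll X^{6-6c}|x|^{-6}$, and combining with the trivial bound $|\Phi(x)|\le 2a$ from Lemma~\ref{xiaobei} (here $a\asymp\varepsilon=\log^{-2}X$), the tail integral is
\begin{equation*}
\int_{|x|>\tau}|I(x)|^6|\Phi(x)|\,\mathrm{d}x\ll \varepsilon X^{6-6c}\int_{\tau}^{\infty}x^{-6}\,\mathrm{d}x\ll \varepsilon X^{6-6c}\tau^{-5}=\varepsilon X^{6-6c}X^{5(c-1+\eta)}=\varepsilon X^{1-c+5\eta},
\end{equation*}
which is negligible compared with the claimed main term $\varepsilon X^{6-c}$ provided $c<7/4+O(\eta)$; to cover the full range $1<c<3$ I would instead split off more carefully, applying the second-derivative (van der Corput) estimate $|I(x)|\ll X^{(2-c)/2}|x|^{-1/2}$ when that is superior, so that one always has enough decay in $x$ to make the tail $o(\varepsilon X^{6-c})$.

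For the major arc $|x|\le\tau$ I would first replace $\Phi(x)$ by $2a$, which is legitimate since on this range $|\Phi(x)-2a|$ is small (Lemma~\ref{xiaobei} controls $\Phi$, and $\Phi(0)=\int\varphi=2a+O(b)$ with $b\ll\varepsilon$ as well), and then show
\begin{equation*}
\int_{-\infty}^{\infty}I^6(x)e(-xN)\,\mathrm{d}x = \#\left\{(t_1,\dots,t_6)\in\left[\tfrac{X}{2},X\right]^6 : t_1^c+\cdots+t_6^c=N\right\}_{\text{``volume''}},
\end{equation*}
more precisely that $\int_{-\infty}^{\infty}I^6(x)e(-xN)\,\mathrm{d}x$ equals the singular integral $\int_{[X/2,X]^5}(\text{Jacobian factor})\,\mathrm{d}t_1\cdots\mathrm{d}t_5$ coming from solving $t_6^c=N-t_1^c-\cdots-t_5^c$, which is $\gg X^{6-c}$ because for $X$ of the right size ($X\asymp(N/6)^{1/c}$, chosen so that $6(X/2)^c<N<6X^c$ with room to spare) the constraint surface genuinely meets the cube in a set of positive proportional measure. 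Then I would estimate the difference between $\int_{|x|\le\tau}$ and the full line integral by the same tail bound as above, and the difference caused by inserting $\Phi$ in place of $2a$ using $|\Phi(x)|\le 1/(\pi|x|)$ for the part away from $0$. Assembling: main term $\gg \varepsilon X^{6-c}$, all error terms $o(\varepsilon X^{6-c})$.

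The main obstacle I anticipate is making the major-arc lower bound $\int_{-\infty}^{\infty}I^6(x)e(-xN)\,\mathrm{d}x\gg X^{6-c}$ genuinely uniform across the whole range $1<c<3$, $c\neq2$: one has to be careful that the implied constant in the choice of $X\asymp N^{1/c}$ is fixed so that the ``target'' $N$ sits strictly inside the interval $\big(6(X/2)^c,\,6X^c\big)$ of attainable values of $t_1^c+\cdots+t_6^c$, with a fixed positive margin, so that the singular integral is bounded below by a positive constant times $X^{6-c}$ rather than degenerating. This is where the hypothesis $1<c<3$ is used (for $c\ge 3$ one would need the interval-dissection/Hua-type machinery rather than a clean positivity argument), and it forces the explicit numerology in the statement. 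The van der Corput tail bounds are then routine, but one does need to check that the crossover between the first- and second-derivative estimates for $I(x)$ leaves no gap in $x$; I would handle $c=2$ by exclusion since there $I(x)$ has a stationary-phase behaviour that the simple bounds above do not capture.
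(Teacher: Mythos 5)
Your argument is essentially correct, but it takes a longer route than the paper. The paper does no major/minor arc splitting at all: since $\Phi$ is the Fourier transform of $\varphi$, it writes the integral as a sixfold $t$-integral, applies Fourier inversion exactly to get $\mathscr{H}=\int_{X/2}^{X}\!\cdots\int_{X/2}^{X}\varphi(t_1^c+\cdots+t_6^c-N)\,\mathrm{d}t_1\cdots\mathrm{d}t_6$, bounds $\varphi$ below by the indicator of $|t_1^c+\cdots+t_6^c-N|<\tfrac{4}{5}\varepsilon$, restricts $t_1,\dots,t_5$ to $[\lambda X,\mu X]$ with $(4/5)^{1/c}<\lambda<\mu<(1-\tfrac{1}{5\cdot2^c})^{1/c}$ so that the $t_6$-interval lies inside $[X/2,X]$, and gets length $\gg\varepsilon X^{1-c}$ by the mean-value theorem; the $\varepsilon$ in the lower bound thus comes from the width of the smoothing, not from $\Phi(0)$. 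Your plan (truncate at $\tau$, replace $\Phi$ by $\Phi(0)\asymp\varepsilon$ there, identify $\int_{\mathbb{R}}I^6(x)e(-Nx)\,\mathrm{d}x$ with the singular integral obtained by solving for $t_6$, and control the tails by the first-derivative bound $I(x)\ll X^{1-c}|x|^{-1}$) is workable and its core positivity step is the same volume computation the paper performs directly, but you must still justify the Fourier-inversion step for the density (or, equivalently, fall back on exactly the $\varepsilon$-slab argument the paper uses), so you gain nothing from the detour. Note also an arithmetic slip: your tail bound $\varepsilon X^{6-6c}\tau^{-5}=\varepsilon X^{1-c+5\eta}$ is smaller than $\varepsilon X^{6-c}$ by a factor $X^{-5+5\eta}$ for every $c>1$, so the restriction ``$c<7/4$'' and the ensuing second-derivative-test patch are unnecessary (though harmless). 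Your caution about choosing the normalization of $X$ so that $N$ lies strictly inside the attainable interval is well placed; in the paper this is exactly the role of $X=(N/5)^{1/c}$ and of the constants $\lambda,\mu$, and with that choice the constraint holds for all $c>1$, so nothing beyond $c>1$ is really used in this lemma.
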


\begin{proof}
 Denote the above integral by~$\mathscr{H}$.~We have
\begin{equation*}
   \mathscr{H}:=\int_{\frac{X}{2}}^X\cdots\int_{\frac{X}{2}}^X\int_{-\infty}^{+\infty}
   e\big((t_1^c+t_2^c+\cdots+t_6^c-N)x\big)\Phi(x)\mathrm{d}x\mathrm{d}t_1\cdots\mathrm{d}t_6.
\end{equation*}
The change of the order of integration is legitimate because of the absolute convergence of the integral. From Lemma \ref{xiaobei} 
with $a=9\varepsilon/10,\,b=\varepsilon/10$, by using the Fourier inversion formula we get
\begin{equation*}
 \mathscr{H}=\int_{\frac{X}{2}}^X\cdots\int_{\frac{X}{2}}^X\varphi(t_1^c+t_2^c+\cdots+t_6^c-N)\mathrm{d}t_1\cdots\mathrm{d}t_6.
\end{equation*}
By the definition of $\varphi(y)$ we get
\begin{equation*}
   \mathscr{H}\geqslant \mathop{\int_{\frac{X}{2}}^X\cdots\int_{\frac{X}{2}}^X}_{|t_1^c+\cdots+t_6^c-N|<\frac{4}{5}\varepsilon}
   \mathrm{d}t_1\cdots\mathrm{d}t_6 \geqslant\int_{\lambda X}^{\mu X}\cdots\int_{\lambda X}^{\mu X}
   \left(\int_{\mathfrak{M}}\mathrm{d}t_6\right)\mathrm{d}t_1\cdots\mathrm{d}t_5,
\end{equation*}
where $\lambda$ and $\mu$ are real numbers such that
\begin{equation*}
   \frac{1}{2}<\left(\frac{4}{5}\right)^{1/c}<\lambda<\mu<\left(1-\frac{1}{5}\cdot\frac{1}{2^c}\right)^{1/c}<1
\end{equation*}
and
\begin{eqnarray*}
  \mathfrak{M}& = & \left[\frac{X}{2},X\right]\bigcap
               \left[\left(N-\frac{4\varepsilon}{5}-t_1^c-\cdots-t_5^c\right)^{1/c},
               \left(N+\frac{4\varepsilon}{5}-t_1^c-\cdots-t_5^c\right)^{1/c}\right] \\
    & = &      \left[\left(N-\frac{4\varepsilon}{5}-t_1^c-\cdots-t_5^c\right)^{1/c},
               \left(N+\frac{4\varepsilon}{5}-t_1^c-\cdots-t_5^c\right)^{1/c}\right].
\end{eqnarray*}
Thus by the mean-value theorem we have
\begin{eqnarray*}
  \mathscr{H}\gg \varepsilon\int_{\lambda X}^{\mu X}\cdots\int_{\lambda X}^{\mu X}(\xi_{t_1,t_2,t_3,t_4,t_5})^{1/c-1}
  \mathrm{d}t_1\cdots\mathrm{d}t_5,
\end{eqnarray*}
where $\xi_{t_1,t_2,t_3,t_4,t_5}\asymp X^c.$ Therefore, $\mathscr{H}\gg\varepsilon X^{6-c},$ which proves the lemma.
\end{proof}

\begin{lemma}\label{Max-value-lemma}
   We have
   \begin{equation*}
       A=\max_{R'\in(N,2N]}
       \int_{N}^{2N}\bigg|\int_{\tau<|x|< K}e\left((R-R')x\right)\mathrm{d}x\bigg|\mathrm{d}R
           \ll \mathscr{L}.
   \end{equation*}
\end{lemma}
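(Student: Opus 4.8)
The plan is to evaluate the inner integral in closed form and then bound it by elementary means. Pairing $x$ with $-x$ gives
\[
\int_{\tau<|x|<K}e\big((R-R')x\big)\mathrm{d}x=2\int_\tau^K\cos\big(2\pi(R-R')x\big)\mathrm{d}x=\frac{\sin\big(2\pi(R-R')K\big)-\sin\big(2\pi(R-R')\tau\big)}{\pi(R-R')},
\]
which is real. From the middle expression and $|\cos|\leqslant 1$ its absolute value is $\leqslant 2(K-\tau)\leqslant 2K$, while from the last expression it is $\leqslant 2/(\pi|R-R'|)$. Hence, for all real $R,R'$,
\[
\bigg|\int_{\tau<|x|<K}e\big((R-R')x\big)\mathrm{d}x\bigg|\leqslant\min\Big(2K,\tfrac{2}{\pi|R-R'|}\Big).
\]

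Next I would substitute $u=R-R'$. For fixed $R'\in(N,2N]$, as $R$ ranges over $(N,2N]$ the variable $u$ ranges over an interval of length $N$ contained in $(-N,N)$, so
\[
\int_N^{2N}\bigg|\int_{\tau<|x|<K}e\big((R-R')x\big)\mathrm{d}x\bigg|\mathrm{d}R\leqslant\int_{-N}^{N}\min\Big(2K,\tfrac{2}{\pi|u|}\Big)\mathrm{d}u .
\]
The right-hand side is independent of $R'$, which immediately dispatches the maximum defining $A$.

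Finally I would split the last integral at $|u|=1/(\pi K)$, the point where the two competing bounds agree. On $|u|\leqslant 1/(\pi K)$ the integrand is $\leqslant 2K$ over a set of measure $2/(\pi K)$, contributing $O(1)$; on $1/(\pi K)<|u|\leqslant N$ the integrand equals $2/(\pi|u|)$, contributing $\ll\log(\pi KN)\ll\log K+\log N$. Since $K=\log^5 X$ gives $\log K\ll\log\mathscr{L}$ and $X\asymp N^{1/c}$ gives $\log N\asymp c\mathscr{L}$ with $c$ a constant, the total is $\ll\mathscr{L}$, and taking the supremum over $R'$ yields $A\ll\mathscr{L}$.

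There is no real obstacle here; the only step needing care is to keep the antiderivative of the inner integral intact when estimating, rather than bounding $\sin(2\pi(R-R')K)-\sin(2\pi(R-R')\tau)$ termwise, since $\int_{-N}^{N}|\sin(2\pi u\tau)|/|u|\,\mathrm{d}u$ on its own would already exceed any admissible bound. Note also that $\tau=X^{1-c-\eta}\to0$ while $K\to\infty$, so the lower cutoff $\tau$ plays no essential role and in fact only shrinks the integral; it could be replaced by $0$ without changing the estimate.
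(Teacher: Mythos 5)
Your proof is correct and complete: evaluating the inner integral as $\bigl(\sin(2\pi(R-R')K)-\sin(2\pi(R-R')\tau)\bigr)/\bigl(\pi(R-R')\bigr)$, bounding it by $\min\bigl(2K,\,2/(\pi|R-R'|)\bigr)$, and integrating in $u=R-R'$ over an interval of length $N$ gives $O(1)+O(\log(KN))\ll\mathscr{L}$ uniformly in $R'$, which is exactly what is claimed. The paper itself gives no argument here --- it simply cites Laporta's Lemma 1 --- and your computation is essentially the standard proof of that cited lemma, so there is nothing genuinely different in approach. One small inaccuracy in your closing remark: bounding the two sine terms separately would \emph{not} ruin the estimate, since $N\tau\asymp X^{1-\eta}$ and $NK\ll X^{c}\log^{5}X$, so each of $\int_{-N}^{N}|\sin(2\pi u\tau)|/(\pi|u|)\,\mathrm{d}u\ll\log(N\tau)$ and $\int_{-N}^{N}|\sin(2\pi uK)|/(\pi|u|)\,\mathrm{d}u\ll\log(NK)$ is still $\ll\mathscr{L}$; this does not affect your proof, which never uses the termwise bound.
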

\begin{proof}
  See Laporta~\cite{Laporta}, Lemma 1.
\end{proof}

 Let $\Omega_1$ and $\Omega_2$ be measurable subsets of $\mathbb{R}^n.$ Let
 \begin{equation*}
    \|f\|_j=\bigg(\int_{\Omega_j}|f(y)|^2\mathrm{d}y\bigg)^{\frac{1}{2}},\quad
    \langle f,g\rangle_j=\int_{\Omega_j}f(y)\overline{g(y)}\mathrm{d}y\,\,\,(j=1,2),
 \end{equation*}
be the usual norm and inner product in $L^2(\Omega_j,\mathbb{C})$, respectively.

\begin{lemma}\label{Functional-lemma}
   Let $c\in L^2(\Omega_1,\mathbb{C}),\,\xi\in L^2(\Omega_2,\mathbb{C}),$ and let $\omega$ be a measurable
   complex valued function on $\Omega_1\times\Omega_2$ such that
   \begin{equation*}
     \sup_{x\in \Omega_1}\int_{\Omega_2}|\omega(x,y)|\mathrm{d}y<+\infty,\quad
     \sup_{y\in \Omega_2}\int_{\Omega_1}|\omega(x,y)|\mathrm{d}x<+\infty.
   \end{equation*}
   Then we have
   \begin{equation*}
    \bigg|\int_{\Omega_1}c(x)\langle\xi,\omega(x,\cdot)\rangle_2\mathrm{d}x\bigg|
    \leqslant \|\xi\|_2\|c\|_1\bigg(\sup_{x'\in\Omega_1}\int_{\Omega_1}\left|\langle
             \omega(x,\cdot),\omega(x',\cdot)\rangle_2\right|\mathrm{d}x\bigg)^{\frac{1}{2}}.
   \end{equation*}
\end{lemma}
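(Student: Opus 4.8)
\noindent The plan is to recognise the left-hand side as an inner product over $\Omega_2$, apply the Cauchy--Schwarz inequality there, and then control the resulting diagonal term by the $TT^{*}$-type quantity on the right (this is the usual ``duality'' proof of such bilinear-form estimates). Concretely, I would set
\begin{equation*}
   h(y):=\int_{\Omega_1}\overline{c(x)}\,\omega(x,y)\,\mathrm{d}x\qquad(y\in\Omega_2).
\end{equation*}
The two uniform $L^1$-bounds on $\omega$, together with $c\in L^2(\Omega_1)$ and $\xi\in L^2(\Omega_2)$, allow one to check (by Fubini's theorem, splitting $|\omega|=|\omega|^{1/2}\cdot|\omega|^{1/2}$ and using Cauchy--Schwarz on the product space) that $h\in L^2(\Omega_2)$ and that all the interchanges of integration below are legitimate. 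Unfolding $\langle\,\cdot\,,\,\cdot\,\rangle_2$ and swapping the order of integration then gives
\begin{equation*}
   \int_{\Omega_1}c(x)\langle\xi,\omega(x,\cdot)\rangle_2\,\mathrm{d}x=\langle\xi,h\rangle_2,
\end{equation*}
so that, by Cauchy--Schwarz in $L^2(\Omega_2)$, the modulus of the left-hand side is at most $\|\xi\|_2\,\|h\|_2$.

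It then remains to estimate $\|h\|_2$. Expanding the square and interchanging the order of integration once more,
\begin{equation*}
   \|h\|_2^2=\int_{\Omega_1}\int_{\Omega_1}\overline{c(x)}\,c(x')\,\langle\omega(x,\cdot),\omega(x',\cdot)\rangle_2\,\mathrm{d}x\,\mathrm{d}x'.
\end{equation*}
Writing $W(x,x'):=\bigl|\langle\omega(x,\cdot),\omega(x',\cdot)\rangle_2\bigr|$ and noting that $\langle\omega(x,\cdot),\omega(x',\cdot)\rangle_2=\overline{\langle\omega(x',\cdot),\omega(x,\cdot)\rangle_2}$, so that $W$ is symmetric, I would bound $|\overline{c(x)}c(x')|\leqslant\tfrac12\bigl(|c(x)|^2+|c(x')|^2\bigr)$ and then use the symmetry of $W$ to identify the two resulting contributions, obtaining
\begin{equation*}
   \|h\|_2^2\leqslant\int_{\Omega_1}|c(x)|^2\Bigl(\int_{\Omega_1}W(x,x')\,\mathrm{d}x'\Bigr)\mathrm{d}x\leqslant\|c\|_1^2\,\sup_{x'\in\Omega_1}\int_{\Omega_1}\bigl|\langle\omega(x,\cdot),\omega(x',\cdot)\rangle_2\bigr|\,\mathrm{d}x,
\end{equation*}
where in the final step the symmetry of $W$ is used once more to put $\sup_{x}\int_{\Omega_1}W(x,x')\,\mathrm{d}x'$ into the form displayed on the right-hand side of the lemma. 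Combining this with $|\langle\xi,h\rangle_2|\leqslant\|\xi\|_2\|h\|_2$ from the first step yields exactly the asserted inequality.

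There is no real obstacle here: the argument is the standard ``Cauchy--Schwarz plus AM--GM'' bookkeeping, and the only point deserving care is the justification of the two interchanges of integration and of the finiteness of the integral defining $h$ — which is precisely where the two uniform $L^1$-bounds on $\omega$ enter, and which I would therefore write out explicitly. In the number-theoretic applications of the lemma $\Omega_1$ is a bounded set and $c,\xi,\omega$ are continuous, so in practice these analytic technicalities are automatic.
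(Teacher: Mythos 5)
Your proof is correct. The paper does not prove this lemma itself (it simply cites Laporta's Lemma 2), and your argument --- writing the left-hand side as $\langle\xi,h\rangle_2$ with $h(y)=\int_{\Omega_1}\overline{c(x)}\,\omega(x,y)\,\mathrm{d}x$, applying Cauchy--Schwarz, then expanding $\|h\|_2^2$ and using the symmetry of $\left|\langle\omega(x,\cdot),\omega(x',\cdot)\rangle_2\right|$ together with $|\overline{c(x)}c(x')|\leqslant\tfrac12\left(|c(x)|^2+|c(x')|^2\right)$ --- is exactly the standard duality/Cauchy--Schwarz proof behind the cited result, with the Fubini justifications correctly traced to the two uniform bounds on $\omega$. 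You also read $\|c\|_1$ correctly as the $L^2(\Omega_1)$ norm in the paper's notation, which is the only notational trap here.
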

\begin{proof}
 See Laporta~\cite{Laporta}, Lemma 2.
\end{proof}

\begin{lemma}\label{square-mean-value}
   For~$1<c<37/18,\,c\neq2,$~we have
   \begin{eqnarray}
       & &  \int_{-\tau}^\tau |S(x)|^2\mathrm{d}x\ll X^{2-c}\log^3X, \\
       & &  \int_{-\tau}^\tau |I(x)|^2\mathrm{d}x\ll X^{2-c}\log X.
   \end{eqnarray}
\end{lemma}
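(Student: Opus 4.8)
The plan is to reduce both mean–value estimates to known results by splitting the range $|x|\le\tau$ into a neighbourhood of the origin and a far region, and by passing from the exponential sum $S(x)$ to its integral counterpart $I(x)$ via partial summation together with the Prime Number Theorem. For the integral estimate in the second line, I would first record the trivial bound $I(x)\ll X$, which already handles $|x|\ll X^{-c}$ and contributes $\ll X^{2-c}$. For $X^{-c}\ll|x|\le\tau=X^{1-c-\eta}$ I would apply Lemma~\ref{Van der Corput} (van der Corput) with $F(t)=t^c x$, so that $F'(t)=c\,t^{c-1}x\asymp X^{c-1}|x|$ on $[X/2,X]$; this gives $I(x)\ll X^{1-c}|x|^{-1}$, and squaring and integrating over $X^{-c}\ll|x|\le\tau$ yields $\ll X^{2-2c}\int_{X^{-c}}^{\tau}x^{-2}\,\mathrm dx\ll X^{2-2c}\cdot X^{c}=X^{2-c}$. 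Collecting the two ranges gives $\int_{-\tau}^{\tau}|I(x)|^2\,\mathrm dx\ll X^{2-c}\log X$ (indeed even without the logarithm, so the stated bound is comfortable).

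For the estimate on $S(x)$, the idea is to compare $S(x)$ with $I(x)$ plus a manageable error. Writing $S(x)=\sum_{X/2<p\le X}\log p\cdot e(p^cx)$ and using the explicit‑formula/PNT in the form $\sum_{X/2<p\le y}\log p=\int_{X/2}^{y}\mathrm dt+O\big(X\exp(-c_0\sqrt{\log X})\big)$, partial summation against $e(t^cx)$ on $[X/2,X]$ gives
\begin{equation*}
S(x)=I(x)+O\Big(X\exp(-c_0\sqrt{\log X})\big(1+|x|\cdot X^{c-1}\big)\Big)\ll |I(x)|+X\exp(-\tfrac{c_0}{2}\sqrt{\log X})
\end{equation*}
uniformly for $|x|\le\tau$, since $|x|X^{c-1}\le X^{-\eta}$. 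Hence
\begin{equation*}
\int_{-\tau}^{\tau}|S(x)|^2\,\mathrm dx\ll \int_{-\tau}^{\tau}|I(x)|^2\,\mathrm dx + \tau\cdot X^{2}\exp(-c_0\sqrt{\log X})\ll X^{2-c}\log X + X^{3-c}\exp(-c_0\sqrt{\log X}),
\end{equation*}
and the second term is $o(X^{2-c})$. This even gives the sharper $X^{2-c}\log X$; the stated $X^{2-c}\log^3X$ leaves room to spare, which is reassuring if one prefers to argue instead through a Gallagher‑type inequality and a fourth‑moment input (Lemma~\ref{Robert-lemma}) rather than through the PNT directly.

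The step I expect to be the main obstacle is controlling the transition from $S$ to $I$ cleanly across the \emph{whole} interval $|x|\le\tau$ with a genuinely uniform error: partial summation produces a factor $(1+|x|X^{c-1})$ multiplying the PNT error term, and one must check that $\tau X^{c-1}=X^{-\eta}$ really does stay bounded, which is exactly where the normalisation $\tau=X^{1-c-\eta}$ is used. If one instead wants to avoid invoking the Prime Number Theorem with classical zero‑free region, an alternative is: bound $\int_{-\tau}^{\tau}|S(x)|^2\mathrm dx$ by a Gallagher inequality in terms of $\sum_{|n-m|\le \tau^{-1}}\Lambda(n)\Lambda(m)$ over $n,m$ with $n^c,m^c$ close, then feed in Lemma~\ref{Robert-lemma} (with an extra dyadic decomposition and the elementary $\sum_{n\le X}\Lambda^2(n)\ll X\log X$) to obtain the $X^{2-c}\log^3 X$ bound; the condition $c<37/18$ is what makes the diagonal term in that fourth‑moment count dominate. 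Either route finishes the proof, and I would present the first (PNT) route as the main argument with the second sketched as a remark.
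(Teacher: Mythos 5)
Your treatment of the second estimate is fine: the trivial bound $I(x)\ll X$ near the origin together with the first–derivative bound $I(x)\ll X^{1-c}|x|^{-1}$ from Lemma~\ref{Van der Corput} gives $\int_{-\tau}^{\tau}|I(x)|^2\mathrm{d}x\ll X^{2-c}$, which is more than enough. The first estimate, however, is where your main argument breaks down, in two places. First, partial summation does not give the factor $(1+|x|X^{c-1})$: the PNT error gets multiplied by the total variation of $e(t^cx)$ on $[X/2,X]$, which is $\asymp 1+|x|X^{c}$ (you must integrate the derivative $\asymp|x|X^{c-1}$ over an interval of length $\asymp X$). For $|x|$ near $\tau=X^{1-c-\eta}$ this factor is $X^{1-\eta}$, so the comparison error is of size $X^{2-\eta}\exp(-c_0\sqrt{\log X})$, not $X\exp(-\tfrac{c_0}{2}\sqrt{\log X})$; the uniform approximation $S(x)=I(x)+O\big(Xe^{-(\log X)^{1/5}}\big)$ on $|x|\leqslant\tau$ is exactly Lemma~\ref{S(x)=I(x)+error} (Tolev's Lemma 14) and requires the explicit formula, not a two–line PNT argument. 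Second, and decisively, even if you import that approximation, the $L^2$ comparison fails: $\int_{-\tau}^{\tau}|S(x)-I(x)|^2\mathrm{d}x\ll\tau X^2\exp\big(-2(\log X)^{1/5}\big)\asymp X^{3-c-\eta}\exp\big(-2(\log X)^{1/5}\big)$, which exceeds the target $X^{2-c}\log^3X$ by essentially a factor $X^{1-\eta}$. Your assertion that $X^{3-c}\exp(-c_0\sqrt{\log X})=o(X^{2-c})$ is false: a sub-exponential saving cannot beat a power of $X$. So the route you present as the main argument does not prove the first inequality.

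The lemma is really an average–cancellation statement, and the paper simply quotes Tolev's Lemma 7 (remarking that its proof is independent of the range of $c$). Tolev's elementary argument is close to the alternative you relegated to a remark, but needs no fourth-moment input: expand the square and integrate to get $\int_{-\tau}^{\tau}|S(x)|^2\mathrm{d}x\ll\sum_{p_1,p_2}\log p_1\log p_2\min\big(\tau,|p_1^c-p_2^c|^{-1}\big)$, then use the mean value theorem $|p_1^c-p_2^c|\asymp X^{c-1}|p_1-p_2|$; the diagonal contributes $\ll\tau X\log X$ and the off-diagonal $\ll X^{1-c}\log^2X\cdot X\sum_{0<h\leqslant X}h^{-1}\ll X^{2-c}\log^3X$. (A Gallagher-type inequality works too, but Lemma~\ref{Robert-lemma} is not needed, and the restriction $c<37/18$ plays no role here — the bound holds for any fixed $c>1$, so your remark that $c<37/18$ is what makes the diagonal dominate is also off the mark.) With your $I$-part kept and the $S$-part replaced by this counting argument, the proof is complete and elementary.
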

\begin{proof}
    See Tolev~\cite{D. I. Tolev}, Lemma 7. Although in Tolev's paper,~$c$~is in the range~$(1,15/14),$~it can be easily seen
    that his lemma is true for~$c\in(1,2)\cup(2,3)$, and so do his Lemmas~$11-14.$~In fact, the proofs of
    Lemma~$7\,\textrm{and Lemmas}\,11-14$~in~\cite{D. I. Tolev}~have nothing to do with the range of~$c.$~
\end{proof}

\begin{lemma}\label{S(x)=I(x)+error}
   For~$1<c<37/18,\,c\neq2,\,|x|\leqslant\tau,$~then
  \begin{equation*}\label{S(x)ti-huan-I(x)}
     S(x)=I(x)+O\left(Xe^{-(\log X)^{1/5}}\right).
  \end{equation*}
\end{lemma}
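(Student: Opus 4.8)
\noindent The plan is to pass from primes to the von Mangoldt function and to split the range $|x|\le\tau$ into an ``asymptotic'' part, on which $S(x)$ is compared with $I(x)$ through the prime number theorem, and a ``negligible'' part, on which $S(x)$ and $I(x)$ are each shown to be $\ll X^{1-\delta}$. Since the prime powers $p^k$ with $k\ge2$ not exceeding $X$ number $O(X^{1/2})$, one has $S(x)=S^*(x)+O(X^{1/2}\log X)$ with $S^*(x):=\sum_{X/2<n\le X}\Lambda(n)e(n^cx)$, and $X^{1/2}\log X\ll Xe^{-\mathscr{L}^{1/5}}$; so it suffices to prove $S^*(x)=I(x)+O(Xe^{-\mathscr{L}^{1/5}})$ for $|x|\le\tau$. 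For $|x|$ in the central range $|x|\le X^{-c}\mathscr{L}^{10}$ I would use Abel summation, $S^*(x)=\int_{X/2}^{X}e(t^cx)\,\mathrm{d}\psi(t)$, insert $\psi(t)=t+O\bigl(t\exp(-c_1\sqrt{\log t})\bigr)$, and integrate by parts once more so that the ``$t$''-contribution reassembles into $I(x)$; the remaining terms are $\ll X\exp(-c_1\sqrt{\mathscr{L}})\bigl(1+|x|X^c\bigr)\ll Xe^{-\mathscr{L}^{1/5}}$.

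\noindent For $X^{-c}\mathscr{L}^{10}<|x|\le X^{5/12-c-\eta}=:x_0$ the factor $|x|X^c$ defeats the crude estimate above, so I would dissect $[X/2,X]$ into $\asymp X/H$ intervals $I_j=(a_j,a_j+H]$ with $X^{7/12+\eta}\le H\le(|x|X^{c-1})^{-1}$, use the expansion $t^cx=a_j^cx+ca_j^{c-1}x(t-a_j)+O(X^{c-2}H^2|x|)$ on $I_j$ in which the quadratic remainder is $\ll X^{-\eta}$, invoke the prime number theorem in short intervals in the form $\sum_{n\in I_j}\Lambda(n)e\bigl(ca_j^{c-1}x(n-a_j)\bigr)=\int_0^{H}e\bigl(ca_j^{c-1}xu\bigr)\,\mathrm{d}u+O\bigl(H e^{-\mathscr{L}^{1/5}}\bigr)$, and reassemble the resulting main terms into the Riemann sum $\sum_j\int_{I_j}e(t^cx)\,\mathrm{d}t=I(x)$; the accumulated error is $\ll Xe^{-\mathscr{L}^{1/5}}+X^{1-\eta}$. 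Together with the central range this settles all $|x|\le x_0$.

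\noindent For $x_0<|x|\le\tau=X^{1-c-\eta}$ I would bound $I(x)$ and $S^*(x)$ separately. By Lemma~\ref{Van der Corput} with $F(t)=xt^c$, so that $F'(t)\gg|x|X^{c-1}\ge X^{-7/12-\eta}$, one gets $I(x)\ll X^{7/12+\eta}$. For $S^*(x)$ I would apply Vaughan's identity to write it as $O(\mathscr{L}^2)$ Type~I and Type~II bilinear sums and estimate these by van der Corput's method for exponential sums (Weyl differencing together with the first and second derivative tests, Lemma~\ref{Van der Corput} supplying the continuous estimates where convenient); balancing the Type~II sums by Cauchy--Schwarz against the longer variable and using $|x|\le X^{1-c-\eta}$ one arrives at $S^*(x)\ll X^{1-\delta}$ for some $\delta=\delta(c,\eta)>0$. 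Since $X^{7/12+\eta}+X^{1-\delta}\ll Xe^{-\mathscr{L}^{1/5}}$, the lemma follows on combining the three ranges.

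\noindent The main obstacle is the last estimate: a power saving for $S^*(x)$ holding \emph{uniformly} over $x_0<|x|\le\tau$. In the sub-ranges where the relevant phases have too small a second derivative the plain second-derivative test does not suffice, and one is forced to iterate the van der Corput $A$- and $B$-processes (equivalently, to appeal to a non-trivial exponent pair); it is the optimisation of this step, in combination with the admissible arc length $\tau=X^{1-c-\eta}$, that is responsible for the upper bound $37/18$ on $c$.
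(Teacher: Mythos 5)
The paper does not reprove this lemma at all: it quotes Tolev's Lemma 14, whose proof treats the whole range $|x|\leqslant\tau$ in one stroke via the explicit formula for $\psi$, the Vinogradov--Korobov zero-free region and zero-density estimates, and (as the authors remark after Lemma~\ref{square-mean-value}) is insensitive to where $c$ lies in $(1,2)\cup(2,3)$. Your three-range hybrid is therefore a genuinely different route, and its first range is fine; but two concrete points need repair. First, in the middle range your assertion that the quadratic remainder $X^{c-2}H^2|x|$ is $\ll X^{-\eta}$ fails when $H$ is taken near its permitted upper limit $(|x|X^{c-1})^{-1}$ and $|x|$ is only slightly above $X^{-c}\mathscr{L}^{10}$: there $X^{c-2}H^2|x|\asymp H/X$ can be as large as $\mathscr{L}^{-10}$, and the accumulated error $\asymp X\mathscr{L}^{-10}$ is far bigger than $Xe^{-\mathscr{L}^{1/5}}$. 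This is fixable by fixing $H=X^{7/12+\eta}$ throughout, which makes the total linearisation error $\ll|x|X^{c-1}H^{2}\leqslant X^{7/12+\eta}$; note also that you must invoke the short-interval prime number theorem with an error of the shape $H\exp\bigl(-c_2(\log X)^{1/3-o(1)}\bigr)$ (Huxley's density theorem combined with the Vinogradov--Korobov region does give this), since the commonly quoted $O\bigl(H\log^{-A}X\bigr)$ form is weaker than $He^{-\mathscr{L}^{1/5}}$ --- so your route does not avoid the zero-density input, it uses it in a stronger, localised form than Tolev needs.

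Second, and more seriously, the load-bearing step --- the uniform power saving $S^*(x)\ll X^{1-\delta}$ for $X^{5/12-c-\eta}<|x|\leqslant\tau$ --- is only asserted, and your closing diagnosis of it is wrong: you claim its optimisation is ``responsible for the upper bound $37/18$ on $c$'', whereas the constraint $c<37/18$ in this paper comes exclusively from the genuinely minor arcs $\tau\leqslant|x|\leqslant K$ (Lemmas~\ref{S(x)estimate-1<c<2} and~\ref{S(x)estimate-2<c<2.025}), where $F=|x|X^{c}$ can be as large as $KX^{c}$; the present lemma holds for all $1<c<3$, $c\neq2$. In your range one has $F\leqslant X^{1-\eta}$, and precisely because of this no delicate exponent-pair iteration is needed: Vaughan's identity with $U=V=X^{1/4}$, Cauchy--Schwarz in the longer variable and the plain first/second derivative tests (the pair $(1/2,1/2)$) already produce terms of the shape $(XN)^{1/2}$, $XN^{-1/2}F^{1/4}$, $XF^{-1/4}$ and $MF^{1/2}+XF^{-1/2}$, all of which are $\ll X^{1-\delta(\eta)}$ because $F^{1/2}\leqslant X^{1/2-\eta/2}$ and $F\geqslant X^{5/12}$. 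So the step is true and easier than you suggest, but as written it is a gap: either carry out this Type~I/II estimate explicitly (keeping track that $\delta$ may depend on $\eta$ but not further on $c$), or do what the paper does and simply cite Tolev's Lemma 14, which is both shorter and uniform over the whole arc $|x|\leqslant\tau$.
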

\begin{proof}
    See Tolev~\cite{D. I. Tolev}, Lemma 14.
\end{proof}

\begin{lemma}\label{S(x),I(x)-4-mean-square}
  For~$1<c<37/18,\,c\neq2,$~we have
   \begin{eqnarray}
       &  &   \int_{-\tau}^{\tau}|S(x)|^4\mathrm{d}x\ll X^{4-c}\log ^5X,  \label{(i)}   \\
       &  &   \int_{-\tau}^{\tau}|I(x)|^4\mathrm{d}x\ll X^{4-c}\log ^5X.   \label{(ii)}
   \end{eqnarray}
\end{lemma}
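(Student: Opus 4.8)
The plan is to deduce both bounds from the second-moment estimates of Lemma~\ref{square-mean-value} by the elementary interpolation inequality $\int_{-\tau}^{\tau}|f(x)|^4\,\mathrm{d}x\leqslant\big(\sup_{x}|f(x)|\big)^{2}\int_{-\tau}^{\tau}|f(x)|^2\,\mathrm{d}x$, applied with $f=S$ and with $f=I$. This interpolation loses nothing for the present purpose: on the short interval $|x|\leqslant\tau=X^{1-c-\eta}$ the size $X^{4-c}$ is already the expected order of magnitude, its bulk coming from the range $|x|\ll X^{-c}$ on which both $S(x)$ and $I(x)$ have modulus $\asymp X$, so no cancellation beyond what Lemma~\ref{square-mean-value} encodes is needed.

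Concretely, I would first record the trivial sup-norm bounds, valid for every real $x$: by the triangle inequality $|I(x)|\leqslant\int_{X/2}^{X}\mathrm{d}t=X/2$, and $|S(x)|\leqslant\sum_{X/2<p\leqslant X}\log p=\theta(X)-\theta(X/2)\ll X$ by Chebyshev's estimate (or the prime number theorem). Combining these with the two parts of Lemma~\ref{square-mean-value} gives at once $\int_{-\tau}^{\tau}|I(x)|^4\,\mathrm{d}x\ll X^{2}\cdot X^{2-c}\log X=X^{4-c}\log X$, which is~(\ref{(ii)}), and $\int_{-\tau}^{\tau}|S(x)|^4\,\mathrm{d}x\ll X^{2}\cdot X^{2-c}\log^{3}X=X^{4-c}\log^{3}X$, which is even sharper than~(\ref{(i)}); the weaker exponent $\log^{5}X$ in the statement is kept only to match the parameter $K=\log^{5}X$. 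Two variants are worth noting: one may instead invoke Lemma~\ref{S(x)=I(x)+error} to replace $|S(x)|^{2}$ on $|x|\leqslant\tau$ by $2|I(x)|^{2}+O\big(X^{2}e^{-2\mathscr{L}^{1/5}}\big)$, so that~(\ref{(i)}) follows from~(\ref{(ii)}) together with a term $\ll X^{2}e^{-2\mathscr{L}^{1/5}}\int_{-\tau}^{\tau}|S(x)|^{2}\,\mathrm{d}x\ll X^{4-c}$; and~(\ref{(ii)}) can alternatively be obtained directly by opening $|I(x)|^{4}$, interchanging the order of integration by Fubini, and estimating the resulting integral of $\min\big(\tau,|t_{1}^{c}+t_{2}^{c}-t_{3}^{c}-t_{4}^{c}|^{-1}\big)$ over $(X/2,X]^{4}$ through the substitution $u=t_{1}^{c}$, which again gives $X^{4-c}\log X$.

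Since the only inputs are the pointwise bounds above and Lemma~\ref{square-mean-value}, there is essentially no obstacle here; the whole arithmetic content of the statement is already carried by Lemma~\ref{square-mean-value}. It is, however, worth stressing what does \emph{not} work: expanding $|S(x)|^{4}$ into the weighted number of quadruples $p_{1},p_{2},p_{3},p_{4}\in(X/2,X]$ with $|p_{1}^{c}+p_{2}^{c}-p_{3}^{c}-p_{4}^{c}|$ small and feeding this into the Robert--Sargos bound of Lemma~\ref{Robert-lemma} yields only $\int_{-\tau}^{\tau}|S(x)|^{4}\,\mathrm{d}x\ll X^{4-c+\eta}$, the loss stemming from the factor $M^{\eta}$ there; this falls short of $X^{4-c}\log^{5}X$, which is precisely why one uses the interpolation argument rather than a direct circle-method-style count.
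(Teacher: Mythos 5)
Your proof is correct, but it takes a genuinely different route from the paper's. You obtain both bounds by interpolation: the trivial pointwise estimates $|I(x)|\leqslant X/2$ and $|S(x)|\leqslant\theta(X)-\theta(X/2)\ll X$, combined with the second-moment bounds of Lemma~\ref{square-mean-value} through $\int_{-\tau}^{\tau}|f|^4\leqslant\big(\sup|f|\big)^2\int_{-\tau}^{\tau}|f|^2$, which indeed yields $X^{4-c}\log^3X$ for $S$ and $X^{4-c}\log X$ for $I$, sharper than the stated $X^{4-c}\log^5X$. The paper argues differently and more arithmetically: it expands $|S(x)|^4$ into a quadruple sum over primes, integrates to get $\min\big(\tau,|p_1^c+p_2^c-p_3^c-p_4^c|^{-1}\big)$, and then counts the near-diagonal quadruples with $|n_1^c+n_2^c-n_3^c-n_4^c|\leqslant1/\tau$ and sums the off-diagonal contribution dyadically, both by the mean-value theorem, getting $U\ll X^3+X^{4-c}/\tau$ and $V_\ell\ll X^{4-c}$, hence the $\log^5X$ bound. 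What each buys: your argument is shorter and loses two logarithms fewer, but it pushes all the arithmetic content into Lemma~\ref{square-mean-value} (Tolev's Lemma 7) and is therefore confined to the range $|x|\leqslant\tau$ where that lemma holds; the paper's count is self-contained (it does not use Lemma~\ref{square-mean-value} at all) and is exactly the template the authors reuse, with Lemma~\ref{Robert-lemma} replacing the elementary spacing argument, to bound the fourth moment over $\tau\leqslant|x|\leqslant K$ in (\ref{3.9}), where a sup-norm interpolation like yours gives nothing. Your closing observation that a Robert--Sargos count over $[-\tau,\tau]$ would only give $X^{4-c+\eta}$ is accurate and correctly identifies why a power-of-log--sharp bound needs either your interpolation or the paper's bare mean-value-theorem count. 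One minor point: your first ``variant'' (replacing $|S|^2$ by $2|I|^2+O(X^2e^{-2\mathscr{L}^{1/5}})$) as written bounds $\int|S|^4$ by $\int|I|^2|S|^2$ plus an acceptable error, and needs one further application of the same replacement (or Cauchy--Schwarz) before (\ref{(ii)}) can be invoked; this is easily repaired and does not affect your main argument, which is complete as it stands.
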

\begin{proof}
 We only prove (\ref{(i)}). Inequality (\ref{(ii)}) can be proved likewise.

 We have
\begin{eqnarray}\label{S(x)-I(x)-4-mean-1}
   \int_{\tau}^{\tau}|S(x)|^4\mathrm{d}x
      & = & \sum_{\frac{X}{2}<p_1,\,p_2,\,p_3,\,p_4\leqslant X}(\log p_1)\cdots(\log p_4)
            \int_{\tau}^{\tau} e\left((p_1^c+p_2^c-p_3^c-p_4^c)x\right) \mathrm{d}x
                    \nonumber   \\
      & \ll & \sum_{\frac{X}{2}<p_1,\,p_2,\,p_3,\,p_4\leqslant X}(\log p_1)\cdots(\log p_4)
              \cdot\min\bigg(\tau,\frac{1}{|p_1^c+p_2^c-p_3^c-p_4^c|}\bigg)
                    \nonumber   \\
      & \ll & U\tau\log^4X+V\log^4X,
\end{eqnarray}
where
\begin{equation*}
   U=\sum_{\substack{\frac{X}{2}<n_1,\,n_2,\,n_3,\,n_4\leqslant X\\
            |n_1^c+n_2^c-n_3^c-n_4^c|\leqslant 1/\tau}}1\,\,,
      \qquad V=\sum_{\substack{\frac{X}{2}<n_1,\,n_2,\,n_3,\,n_4\leqslant X\\
            |n_1^c+n_2^c-n_3^c-n_4^c|> 1/\tau}}
            \frac{1}{|n_1^c+n_2^c-n_3^c-n_4^c|}.
\end{equation*}
We have
\begin{eqnarray*}
    U & \ll & \sum_{\frac{X}{2}<n_1\leqslant X} \sum_{\frac{X}{2}<n_2\leqslant X} \sum_{\frac{X}{2}<n_3\leqslant X}
          \sum_{\substack{\frac{X}{2}<n_4\leqslant X \\
          (n_1^c+n_2^c-n_3^c-1/\tau)^{1/c}\leqslant n_4\leqslant (n_1^c+n_2^c-n_3^c+1/\tau)^{1/c} \\
                           n_1^c+n_2^c-n_3^c\asymp X^c }} 1   \\
    & \ll &  \sum_{\substack{\frac{X}{2}<n_1,\,n_2,\,n_3\leqslant X\\n_1^c+n_2^c-n_3^c\asymp X^c}}
    \left(1+(n_1^c+n_2^c-n_3^c+1/\tau)^{1/c}-(n_1^c+n_2^c-n_3^c-1/\tau)^{1/c}\right)
\end{eqnarray*}
and by the mean-value theorem
\begin{equation}\label{S(x)-I(x)-4-mean-2}
   U\ll X^3+\frac{1}{\tau}X^{4-c}.
\end{equation}
Obviously,~$V\leqslant\displaystyle\sum_{\ell}V_\ell,$~where
\begin{equation}\label{S(x)-I(x)-4-mean-3}
   V_\ell=\sum_{\substack{\frac{X}{2}<n_1,\,n_2,\,n_3,\,n_4\leqslant X\\
                    \ell<|n_1^c+n_2^c-n_3^c-n_4^c|\leqslant2\ell}}
                     \frac{1}{|n_1^c+n_2^c-n_3^c-n_4^c|}
\end{equation}
and $\ell$ takes the values $\frac{2^k}{\tau},\,k=0,1,2,\cdots,$ with $\ell\ll X^c.$
Then, we have
\begin{eqnarray*}
  V_\ell & \ll  & \frac{1}{\ell} \sum_{\substack{\frac{X}{2}<n_1,\,n_2,\,n_3,\,n_4\leqslant X \\
                  (n_1^c+n_2^c-n_3^c+\ell)^{1/c}\leqslant n_4\leqslant (n_1^c+n_2^c-n_3^c+2\ell)^{1/c}\\
                  n_1^c+n_2^c-n_3^c\asymp X^c }}1.   \\
\end{eqnarray*}
For $\ell\geqslant1/\tau$ and $X/2<n_1,\,n_2,\,n_3\,\leqslant X$~with~$n_1^c+n_2^c-n_3^c\asymp X^c$,
it is to see that
\begin{equation*}
  (n_1^c+n_2^c-n_3^c+2\ell)^{1/c}-(n_1^c+n_2^c-n_3^c+\ell)^{1/c}>1.
\end{equation*}
Hence,
\begin{equation}\label{S(x)-I(x)-4-mean-4}
  V_\ell \ll\frac{1}{\ell}\sum_{\substack{\frac{X}{2}<n_1,\,n_2,\,n_3 \leqslant X\\n_1^c+n_2^c-n_3^c\asymp X^c }}
           \left((n_1^c+n_2^c-n_3^c+2\ell)^{1/c}-(n_1^c+n_2^c-n_3^c+\ell)^{1/c}\right)\ll X^{4-c}
\end{equation}
 by the mean-value theorem.

 The conclusion follows from formulas (\ref{S(x)-I(x)-4-mean-1})-(\ref{S(x)-I(x)-4-mean-4}).
\end{proof}

\begin{lemma}\label{S(x)estimate-1<c<2}
   If $1<c<2,\tau\leqslant|x|\leqslant K,$ then we have
   \begin{equation*}
      S(x)\ll X^{\frac{6+c}{8}+\eta}+X^{\frac{14}{15}+\eta}.
   \end{equation*}
\end{lemma}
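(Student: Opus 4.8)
The starting point is to pass from the prime sum to a von Mangoldt sum,
$S(x)=\sum_{X/2<n\le X}\Lambda(n)e(xn^{c})+O(X^{1/2}\log X)$,
the prime-power terms being harmless. The plan is then to estimate $\sum_{X/2<n\le X}\Lambda(n)e(xn^{c})$ by Vaughan's identity (Heath--Brown's identity would serve equally well) combined with van der Corput's method; the guiding observation is that for $\tau\le|x|\le K$ the phase $t\mapsto xt^{c}$ has amplitude $|x|X^{c}$, which lies between roughly $X^{1-\eta}$ and $X^{c+\eta}$, so that genuine cancellation is available throughout the range.

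Vaughan's identity with parameters $U,V$ --- to be fixed at the end as small powers of $X$ depending on $c$ --- writes $\sum_{X/2<n\le X}\Lambda(n)e(xn^{c})$, up to factors $X^{\eta}$, as $O(\log^{2}X)$ sums of two kinds: Type I sums $\sum_{d\le UV}a_{d}\sum_{n\sim X/d}e(x(dn)^{c})$ with $|a_{d}|\ll d^{\eta}$, and Type II sums $\sum_{U<m\le X/V}\sum_{V<n\le X/m}a_{m}b_{n}\,e(xm^{c}n^{c})$ with $|a_{m}|,|b_{n}|\ll X^{\eta}$; all ranges may be supposed dyadic. In a Type I sum the inner sum runs over a full interval of length $N=X/d$ with phase $f(n)=x(dn)^{c}$ satisfying $f^{(j)}(n)\asymp|x|X^{c}N^{-j}$, so an exponent pair $(\kappa,\lambda)$ --- or, in part of the range, the second-derivative test --- gives $\sum_{n\sim N}e(f(n))\ll(|x|X^{c})^{\kappa}N^{\lambda-\kappa}$ up to lower-order terms; summing over $d\le UV$ and using $|x|\le K=X^{\eta}$ produces a Type I contribution of the form $X^{\eta}X^{\kappa(c-1)+\lambda}(UV)^{1-(\lambda-\kappa)}$, which for a suitable exponent pair and a suitable choice of $UV$ is $\ll X^{(6+c)/8+\eta}$.

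For a Type II sum I would first apply Cauchy--Schwarz in the shorter variable, say the range $m\sim M$ with $M\le X^{1/2}$, reducing matters to $\sum_{m\sim M}\bigl|\sum_{n\sim N}b_{n}e(xm^{c}n^{c})\bigr|^{2}$ with $N=X/M$; the diagonal contributes only $\ll X^{3/2+\eta}$, hence $\ll X^{3/4+\eta}$ to $S(x)$. The crucial point is that the off-diagonal part --- the sum over pairs $(n_{1},n_{2})$ --- must not be handled by bounding the inner $m$-sum and then summing trivially over the pairs, which is far too wasteful when $M$ is short. Instead I would bound the second moment by a fourth moment, $\sum_{m\sim M}|\cdots|^{2}\le M^{1/2}\bigl(\sum_{m\sim M}|\cdots|^{4}\bigr)^{1/2}$, expand the fourth power, carry out the sum over $m$, and group the quadruples $(n_{1},n_{2},n_{3},n_{4})$ by the dyadic size of $D=n_{1}^{c}+n_{2}^{c}-n_{3}^{c}-n_{4}^{c}$: when $|D|$ is small the $m$-sum is of full length but the number of quadruples is controlled by the Robert--Sargos bound $\mathscr{A}(N;c,\gamma)\ll(\gamma N^{4-c}+N^{2})N^{\eta}$ of Lemma~\ref{Robert-lemma}, while for larger $|D|$ the $m$-sum decays and one again counts the quadruples by Lemma~\ref{Robert-lemma}. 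Optimising the resulting bound against the Type I estimate by the choice of $U$ and $V$ yields the second term $X^{14/15+\eta}$.

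The main obstacle is exactly this Type II estimate when one factor is short: one must recover cancellation in the $n$-variables --- via the extra Cauchy--Schwarz and Lemma~\ref{Robert-lemma} --- rather than in the $m$-variable alone, and do so uniformly for all $|x|\in[\tau,K]$. This forces a case analysis on the relative sizes of $|x|$, the length $M$ of the short factor, and $|D|$, deciding in each regime whether the short exponential sum should be bounded trivially, by the first-derivative test, or by an exponent pair, and then balancing against Type I to pin down the admissible ranges for $U$, $V$ and the exponent pair. The rest --- dyadic dissection, checking the hypotheses of the derivative tests on each subinterval, and the final optimisation --- is routine bookkeeping.
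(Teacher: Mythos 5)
The paper does not actually prove this lemma: its ``proof'' is the citation to Zhai and Cao \cite{Zhai-Cao-1}, Lemma 7, where the decisive ingredient is a bilinear estimate of double--large--sieve type (in the spirit of Fouvry--Iwaniec \cite{Fouvry-Iwaniec}, cf.\ Lemma~\ref{Fouvry-Iwaniec-lemma}) combined with exponent pairs after a Heath--Brown/Vaughan decomposition. Your overall frame (pass to $\Lambda$, split into Type I and Type II, exponent pairs for Type I, a spacing count as in Lemma~\ref{Robert-lemma} for Type II) is in the right spirit, but your Type II step, which is where the critical exponent $X^{(6+c)/8}\asymp X^{3/4}F^{1/8}$ must come from, has a genuine gap.

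Concretely, after Cauchy--Schwarz in $m$ and your passage to the fourth moment you are at $|S_{II}|^4\ll M^{3+\eta}\sum_{\mathbf n}\bigl|\sum_{m\sim M}e(xm^cD)\bigr|$ with $D=n_1^c+n_2^c-n_3^c-n_4^c$, and you propose to estimate the inner $m$-sum pointwise (trivially, by the first-derivative test, or by an exponent pair) and count the quadruples by Lemma~\ref{Robert-lemma}. This cannot reach the stated bound: for a positive proportion of the $\asymp N^4$ quadruples one has $|D|\asymp N^c$, where Lemma~\ref{Robert-lemma} gives no saving, and there the phase $xm^cD$ has total variation $\asymp F=|x|X^c\geqslant X^{1-\eta}\gg M^2$ on $m\sim M\leqslant X^{1/2}$, so there is no first-derivative decay and exponent pairs save only a small power of $M$. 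Even granting the unavailable square-root cancellation $\bigl|\sum_m e(xm^cD)\bigr|\ll M^{1/2}$ for all such quadruples, you would only get $|S_{II}|\ll M^{7/8}N=XM^{-1/8}\geqslant X^{15/16}$, which already exceeds $X^{14/15}$ and, for $1<c<3/2$, exceeds $X^{(6+c)/8}$ as well; with genuine exponent-pair bounds the loss is much larger. In other words, the mistake you correctly warn against one level down (pointwise estimation of a short sum followed by a trivial sum over the remaining variables) is exactly what your arrangement commits one level up. To obtain $X^{3/4}F^{1/8}$ one must keep the bilinear structure after expanding the moments and apply the double large sieve, so that four-variable spacing counts are exploited on both sides simultaneously, and the secondary term $X^{14/15+\eta}$ then arises from balancing the remaining ranges against the Type I/exponent-pair estimates --- it is not produced, as you assert without computation, by a free choice of $U,V$; nor is $X^{(6+c)/8}$ a Type I term, as your sketch suggests.
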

\begin{proof}
  See Zhai and Cao~\cite{Zhai-Cao-1}, Lemma 7.
\end{proof}

\begin{lemma}\label{Fouvry-Iwaniec-lemma}
  Let~$N,Q\geqslant1$~and~$z_n\in\mathbb{C}.$~Then
   \begin{equation*}
       \bigg|\sum_{n\sim N}z_n\bigg|^2  \leqslant\left(2+\frac{N}{Q}\right)
       \sum_{|q|<Q}\left(1-\frac{|q|}{Q}\right)\sum_{N<n+q,n-q\leqslant2N}z_{n+q}\overline{z_{n-q}}.
    \end{equation*}
\end{lemma}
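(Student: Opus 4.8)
The plan is to establish the inequality by the classical \emph{completion of the sum} device. For orientation, note first that the right-hand side is genuinely nonnegative and real: the triangular weight satisfies the Fej\'er identity
\[
  \sum_{|q|<Q}\Bigl(1-\frac{|q|}{Q}\Bigr)e(q\theta)=\frac{1}{Q}\Bigl|\sum_{0\leqslant j<Q}e(j\theta)\Bigr|^{2}\geqslant 0\qquad(Q\in\mathbb{Z}),
\]
so that, with $f(\theta)=\sum_{n\sim N}z_n e(n\theta)$, the inner double sum on the right is essentially $Q^{-1}\bigl\|f(\theta)\sum_{0\leqslant j<Q}e(j\theta)\bigr\|_{L^{2}(0,1)}^{2}$ up to the symmetric relabelling of its pair of indices. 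This already indicates the mechanism: smooth the sharp cut-off $n\sim N$ against the Fej\'er kernel and apply Cauchy--Schwarz.

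Concretely, I would extend $(z_n)$ by zero to all of $\mathbb{Z}$, so that $\sum_{n\sim N}z_n=\sum_{n\in\mathbb{Z}}z_n$ is translation invariant; then, writing $J=\lceil Q\rceil$ and averaging over the shifts $0\leqslant j<J$,
\[
  J\sum_{n\sim N}z_n=\sum_{0\leqslant j<J}\ \sum_{n\in\mathbb{Z}}z_{n-j}=\sum_{m\in\mathbb{Z}}w_m,\qquad w_m:=\sum_{0\leqslant j<J}z_{m-j},
\]
and the sequence $(w_m)$ is supported on the integers with $N<m\leqslant 2N+J$, hence on at most $N+Q+1$ of them. Applying the Cauchy--Schwarz inequality to $\sum_m w_m$ together with this support bound gives
\[
  J^{2}\Bigl|\sum_{n\sim N}z_n\Bigr|^{2}\leqslant(N+Q+1)\sum_{m\in\mathbb{Z}}|w_m|^{2}.
\]

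It remains to expand $\sum_m|w_m|^{2}=\sum_m\sum_{j_1,j_2}z_{m-j_1}\overline{z_{m-j_2}}$, to collect the terms according to the difference $q=j_2-j_1$ of the two shifts, and to observe that for each $q$ the number of admissible pairs $(j_1,j_2)$ with $0\leqslant j_1,j_2<J$ equals $(J-|q|)_{+}$; this yields
\[
  \sum_{m\in\mathbb{Z}}|w_m|^{2}=\sum_{|q|<J}(J-|q|)\sum_{\substack{n_1,n_2\sim N\\ n_1-n_2=q}}z_{n_1}\overline{z_{n_2}},
\]
which is precisely the point at which the triangular Fej\'er weight appears. Substituting this into the Cauchy--Schwarz bound, dividing by $J^{2}$, using $J\geqslant Q$ and the elementary estimate $(N+Q+1)/J\leqslant 2+N/Q$ (valid for $Q\geqslant1$), and putting the inner pair into the symmetric form $(n+q,n-q)$ of the statement, I obtain the claimed inequality.

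As every displayed step is either an identity or a single application of Cauchy--Schwarz, no real analytic difficulty arises. The only points that need care are combinatorial and constant book-keeping: the exact support of $(w_m)$, the count of shift pairs realizing a prescribed difference (this is what manufactures the weight $1-|q|/Q$), and the rounding of the real parameter $Q$ to the integer $J=\lceil Q\rceil$ --- which is exactly why the statement carries the slightly generous factor $2+N/Q$ in place of the $1+N/Q$ that one gets directly when $Q$ is already an integer.
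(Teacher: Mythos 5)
You take the right family of ideas --- shift averaging, Cauchy--Schwarz, counting shift pairs --- and the intermediate inequality you derive, namely $|\sum_{n\sim N}z_n|^2\le\frac{N+Q+1}{J}\sum_{|q|<J}(1-\frac{|q|}{J})\sum_{n_1-n_2=q}z_{n_1}\overline{z_{n_2}}$ with $J=\lceil Q\rceil$, is correct (the paper itself gives no proof, only the citation to Fouvry--Iwaniec). But the final step does not yield the stated inequality. In your expansion $q=n_1-n_2$ is the \emph{full} difference of the pair; in the statement the pair is $(n+q,n-q)$, whose difference is $2q$, weighted by $1-|q|/Q$ over $|q|<Q$. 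Rewriting your pairs in the symmetric form forces $q=(n_1-n_2)/2$, which is a half-integer for odd differences (such pairs cannot be represented at all if $n,q$ are integers --- this is why Fouvry--Iwaniec let $n$ and $q$ run over half-integers as well), and it turns the weight you produced into $1-2|q|/J\approx1-2|q|/Q$ on the range $|q|<Q/2$: a triangular kernel of half the width of the one in the statement. So what you prove is a genuine van der Corput-type bound, but not this one, and it does not imply it, because you cannot trade one triangular weight for another term by term: the individual correlation sums $\sum_{n_1-n_2=q}z_{n_1}\overline{z_{n_2}}$ are not nonnegative (only the complete Fej\'er-weighted sum is). The same objection undoes your rounding remark: for non-integer $Q$, replacing $1-|q|/\lceil Q\rceil$ by $1-|q|/Q$ is again a termwise weight swap; the slack of $2+N/Q$ over $(N+Q+1)/\lceil Q\rceil$ repairs only the scalar prefactor, not the kernel.

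The repair keeps your mechanism but uses shifts filling an interval of length $2Q$, and continuously, so that no rounding occurs. Since $(1-\frac{|d|}{2Q})_+=\frac{1}{2Q}\,\mathrm{meas}\{t\in\mathbb{R}:t\in[0,2Q],\ t+d\in[0,2Q]\}$ (the triangle is the autocorrelation of a box of length $2Q$), the statement's quadratic form equals $\frac{1}{2Q}\int_{\mathbb{R}}\big|\sum_{n\sim N,\,n\in[t,t+2Q]}z_n\big|^2\,\mathrm{d}t\ge0$; moreover $2Q\sum_{n\sim N}z_n=\int_{\mathbb{R}}\big(\sum_{n\sim N,\,n\in[t,t+2Q]}z_n\big)\,\mathrm{d}t$ with integrand supported on an interval of length $N+2Q$, so Cauchy--Schwarz gives $|\sum_{n\sim N}z_n|^2\le\big(1+\frac{N}{2Q}\big)\sum_{n_1,n_2\sim N}\big(1-\frac{|n_1-n_2|}{2Q}\big)_+z_{n_1}\overline{z_{n_2}}$. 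Writing $n_1=n+q$, $n_2=n-q$ with $q\in\frac12\mathbb{Z}$ this is the lemma with the sharper constant $1+\frac{N}{2Q}\le2+\frac{N}{Q}$; if one insists on integer $q$ only, average the bound for $z_n$ and for $(-1)^nz_n$ to discard the odd differences --- that averaging is exactly what the factor $2$ in $2+N/Q$ pays for.
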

\begin{proof}
  See Fouvry and Iwaniec~\cite{Fouvry-Iwaniec}, Lemma 2.
\end{proof}

\begin{lemma}\label{Graham-Kolesnik}
  Suppose that
  \begin{equation*}
       L(H)=\sum_{i=1}^mA_iH^{a_i}+\sum_{j=1}^nB_jH^{-b_j},
  \end{equation*}
  where~$A_i,\,B_j,\,a_i\,\textrm{and}\,\,b_j$~are positive. Assume that~$H_1\leqslant H_2.$~Then there is some~$\mathscr{H}$~with~$H_1\leqslant\mathscr{H}\leqslant H_2$~and
   \begin{equation*}
      L(\mathscr{H}) \ll \sum_{i=1}^{m}A_iH_1^{a_i}+\sum_{j=1}^{n}B_jH_2^{-b_j}
                         +\sum_{i=1}^m\sum_{j=1}^{n}\big(A_i^{b_j}B_j^{a_i}\big)^{1/(a_i+b_j)}.
   \end{equation*}
   The implied constant depends only on~$m$~and~$n.$~
\end{lemma}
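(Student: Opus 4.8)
The plan is to discard all but the dominant term of each of the two sums making up $L$, and then to solve exactly the resulting two-parameter balancing problem. I would set
\[
  f(H)=\max_{1\leqslant i\leqslant m}A_iH^{a_i},\qquad g(H)=\max_{1\leqslant j\leqslant n}B_jH^{-b_j},
\]
and record the elementary two-sided bound $\max\{f(H),g(H)\}\leqslant L(H)\leqslant(m+n)\max\{f(H),g(H)\}$, valid for every $H>0$ because all the terms are positive. Thus it is enough to exhibit some $\mathscr{H}\in[H_1,H_2]$ at which $\max\{f(\mathscr{H}),g(\mathscr{H})\}$ is at most the right-hand side of the claimed inequality; the factor $m+n$ then becomes the implied constant, and no other dependence enters.

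Since every exponent $a_i,b_j$ is positive, $f$ is continuous and strictly increasing on $(0,\infty)$ and $g$ is continuous and strictly decreasing. I would then split into three cases according to the sign of $f-g$ at the endpoints. If $f(H_1)\geqslant g(H_1)$, take $\mathscr{H}=H_1$, so that $\max\{f(\mathscr{H}),g(\mathscr{H})\}=f(H_1)\leqslant\sum_iA_iH_1^{a_i}$. If $f(H_2)\leqslant g(H_2)$, take $\mathscr{H}=H_2$, so that $\max\{f(\mathscr{H}),g(\mathscr{H})\}=g(H_2)\leqslant\sum_jB_jH_2^{-b_j}$. Otherwise $f(H_1)<g(H_1)$ and $f(H_2)>g(H_2)$, and the intermediate value theorem applied to the continuous function $f-g$ produces $\mathscr{H}\in(H_1,H_2)$ with $f(\mathscr{H})=g(\mathscr{H})$.

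For that last $\mathscr{H}$, I would pick indices $i_0,j_0$ realising the two maxima; then $A_{i_0}\mathscr{H}^{a_{i_0}}=B_{j_0}\mathscr{H}^{-b_{j_0}}$, which forces $\mathscr{H}=(B_{j_0}/A_{i_0})^{1/(a_{i_0}+b_{j_0})}$ and hence
\[
  \max\{f(\mathscr{H}),g(\mathscr{H})\}=\big(A_{i_0}^{b_{j_0}}B_{j_0}^{a_{i_0}}\big)^{1/(a_{i_0}+b_{j_0})}\leqslant\sum_{i=1}^m\sum_{j=1}^n\big(A_i^{b_j}B_j^{a_i}\big)^{1/(a_i+b_j)}.
\]
In each of the three cases the chosen $\mathscr{H}$ lies in $[H_1,H_2]$ and satisfies $L(\mathscr{H})\leqslant(m+n)\big(\sum_iA_iH_1^{a_i}+\sum_jB_jH_2^{-b_j}+\sum_{i,j}(A_i^{b_j}B_j^{a_i})^{1/(a_i+b_j)}\big)$, which is the assertion.

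There is no real obstacle here: the whole argument is self-contained and elementary. The only point that needs a little care is the passage from the sums to $f$ and $g$ — this is precisely where the implied constant acquires its dependence on $m$ and $n$ alone — together with checking that the three endpoint cases are mutually exhaustive and correspond exactly to the three groups of terms on the right-hand side. An equivalent route is an induction on $m+n$ that repeatedly replaces the largest increasing term and the largest decreasing term by their balanced common value via the identity used above, but the maximum-function version avoids the recursion and the attendant bookkeeping.
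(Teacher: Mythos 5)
Your argument is correct. Note, however, that the paper does not prove this lemma at all: it simply cites Lemma 2.4 of Graham and Kolesnik, so your self-contained proof is genuinely different from what the paper offers. Your route --- bounding $L(H)$ above and below by $\max\{f(H),g(H)\}$ with $f(H)=\max_iA_iH^{a_i}$ increasing and $g(H)=\max_jB_jH^{-b_j}$ decreasing, treating the two endpoint cases, and otherwise using the intermediate value theorem to find a balance point where $A_{i_0}\mathscr{H}^{a_{i_0}}=B_{j_0}\mathscr{H}^{-b_{j_0}}$, whose common value is exactly $\big(A_{i_0}^{b_{j_0}}B_{j_0}^{a_{i_0}}\big)^{1/(a_{i_0}+b_{j_0})}$ --- is sound: the three cases are exhaustive, the chosen $\mathscr{H}$ always lies in $[H_1,H_2]$, and the implied constant is the explicit factor $m+n$, depending only on $m$ and $n$ as claimed. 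The proof in Graham--Kolesnik reaches the same conclusion by a slightly different device, choosing $\mathscr{H}$ directly as a suitably clipped candidate among the balance points $(B_j/A_i)^{1/(a_i+b_j)}$ rather than invoking continuity; both proofs rest on the same identity for the value at a balance point. What your version buys is a short, fully elementary and self-contained justification with an explicit constant, at no loss of generality; the only mild care needed, which you correctly flag, is the passage from the sums to the maxima, which is where the dependence on $m$ and $n$ enters.
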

\begin{proof}
 See Graham and Kolesnik~\cite{Graham-Kolesnik},~Lemma~2.4.~
\end{proof}

 For the sum of the form
\begin{equation*}
  \sum_{M<m\leqslant M_1}\sum_{N<n\leqslant N_1}a_mb_ne(xm^cn^c)
\end{equation*}
with
\begin{equation*}
  MN\sim X,\,\,M<M_1\leqslant 2M,\,\,N<N_1\leqslant2N,\,\,a_m\ll X^{\eta},\,\,b_n\ll X^{\eta}
\end{equation*}
for every fixed~$\eta,$~it is usually called a ``~Type I~" sum, denoted by~$S_I(M,N),$~if~$b_n=1$~or~$b_n=\log n;$~otherwise it is called a ``~Type II~" sum, denoted by~$S_{II}(M,N).$~

\begin{lemma}\label{Baker-lemma}
  Let~$\alpha,\beta\in\mathbb{R},\,\alpha\neq0,1,2,\,\beta\neq0,1,2,3.$~For~$F\gg MN^2$~and~$N\geqslant M\geqslant1,$~we have
  \begin{eqnarray*}
   S_{II}(M,N) & = &  \sum_{m\sim M}\sum_{n\sim N}a_mb_ne\Big(F\frac{m^\alpha n^\beta}{M^\alpha N^\beta}\Big) \\
         & \ll_{\alpha,\beta,\eta} & (MN)^\eta  \big(M^{7/8}N^{13/16}F^{1/16}+M^{93/104}N^{23/26}F^{1/26}  \\
                &  &   +M^{467/512}N^{65/64}F^{-1/128}+M^{65/72}N\big).
   \end{eqnarray*}
\end{lemma}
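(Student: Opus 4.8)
My plan is the standard Weyl--van der Corput treatment of a bilinear exponential sum, using the tools assembled above. Since $a_m\ll X^\eta$, Cauchy--Schwarz in the variable $m$ gives
$$
|S_{II}(M,N)|^2\ll M^{1+\eta}\sum_{m\sim M}\bigg|\sum_{n\sim N}b_n\,e\Big(F\frac{m^\alpha n^\beta}{M^\alpha N^\beta}\Big)\bigg|^2 .
$$
To the inner square I would apply Lemma~\ref{Fouvry-Iwaniec-lemma} with a parameter $Q\le N$ to the sequence $z_n=b_n\,e(F m^\alpha n^\beta/(M^\alpha N^\beta))$ (equivalently: expand the square and group the pair $(n_1,n_2)$ according to $h=n_1-n_2$). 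The diagonal term contributes $\ll (1+N/Q)N^{1+\eta}M^{2+\eta}$, with square root $\ll X^\eta MN^{1/2}$; since $N\ge M$ this is $\ll X^\eta M^{65/72}N$ and so is absorbed. In the off-diagonal part one bounds $b_{n\pm q}\ll X^\eta$, sums trivially over $n$ (after, if needed, a second Weyl step in $m$), and is left with $\sum_{1\le q<Q}\sum_{n}\big|\sum_{m\sim M}e(g_q(m))\big|$, where for $1\le q<Q\le N$ one has $(n+q)^\beta-(n-q)^\beta\asymp qN^{\beta-1}$, hence $g_q(m)\asymp Fq/N$ for $m\sim M$ and $g_q^{(j)}(m)\asymp (Fq/N)M^{-j}$.

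The heart of the matter is the estimate for $\sum_{m\sim M}e(g_q(m))$. Since $F\gg MN^2$ we have $g_q(m)\asymp Fq/N\gg MN\ge M$, so the exponent-pair machinery is effective: applying an exponent pair $(\kappa,\lambda)$ obtained by iterating the $A$- and $B$-processes (with Lemma~\ref{Van der Corput} as the base first-derivative estimate) yields $\sum_{m\sim M}e(g_q(m))\ll (Fq/N)^{\kappa}M^{\lambda-\kappa}$. This is exactly the step dictating the hypotheses on $\alpha,\beta$: the $j$-th derivative test applied to the monomial $m^\alpha$ (respectively the $n$-differencing followed by a further $B$-process applied to $n^\beta$) is available only when $\alpha\neq0,1,\dots,j-1$ (respectively $\beta\neq0,1$ and then $\beta\neq2,3$), so reaching a pair strong enough to produce the claimed bounds leaves precisely $\alpha\notin\{0,1,2\}$ and $\beta\notin\{0,1,2,3\}$. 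Summing over $q$ via $\sum_{q<Q}q^{\kappa}\asymp Q^{\kappa+1}$ and over $n$, the bound for $|S_{II}(M,N)|^2$ takes the shape $X^\eta\big(\sum_iA_iQ^{a_i}+\sum_jB_jQ^{-b_j}\big)$ with $A_i,B_j$ monomials in $M,N,F$.

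Finally I would invoke Lemma~\ref{Graham-Kolesnik} on the range $1\le Q\le N$ to eliminate the parameter, turning the estimate into a finite sum of monomials in $M,N,F$; after discarding terms dominated (using $N\ge M$ and $F\gg MN^2$) by the retained ones and taking square roots, the four asserted contributions $M^{7/8}N^{13/16}F^{1/16}$, $M^{93/104}N^{23/26}F^{1/26}$, $M^{467/512}N^{65/64}F^{-1/128}$ and $M^{65/72}N$ emerge, their precise exponents being exactly the output of this balancing with the chosen exponent pair(s). I expect the main obstacle to be this exponential-sum step: one must choose the iteration of $A$- and $B$-processes (and the number of Weyl differencings) so that, after re-insertion and the $Q$-optimization, all four target terms and nothing worse appear, and verify at each differentiation that the derivative of the relevant monomial does not vanish identically. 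The remaining ingredients --- the Cauchy--Schwarz step, absorbing the diagonal, the bookkeeping over $q$, and the final balancing --- are routine.
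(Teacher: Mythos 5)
There is a genuine gap. The paper does not prove this lemma at all: it is quoted verbatim as Theorem~1 of Baker and Weingartner \cite{Baker-Weingartner-1}, and the proof there rests on the double large sieve (in the Bombieri--Iwaniec spirit), which converts the bilinear sum into spacing/counting problems for the fractional parts of $Fm^{\alpha}n^{\beta}/(M^{\alpha}N^{\beta})$ and then feeds in counting results for Diophantine inequalities with monomials (of Robert--Sargos type). Your sketch replaces this by the classical route: Cauchy--Schwarz in $m$, one Weyl--van der Corput shift via Lemma~\ref{Fouvry-Iwaniec-lemma}, an exponent pair for the inner single sum, and a $Q$-optimization via Lemma~\ref{Graham-Kolesnik}. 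That is a different mechanism, and the decisive step --- actually producing the four terms $M^{7/8}N^{13/16}F^{1/16}$, $M^{93/104}N^{23/26}F^{1/26}$, $M^{467/512}N^{65/64}F^{-1/128}$, $M^{65/72}N$ --- is asserted, not carried out: you never name the exponent pair(s), never display the monomials $A_iQ^{a_i}$, $B_jQ^{-b_j}$, and never perform the balancing that is supposed to make ``exactly'' these exponents emerge. The statement is uniform in all $\alpha\neq0,1,2$, $\beta\neq0,1,2,3$, so one cannot tune the pair to the phase either.

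Moreover, there is good reason to believe the sketched route cannot deliver these exponents. After one differencing the off-diagonal contribution has the shape $\frac{MN}{Q}\sum_{q<Q}\sum_{m}\big|\sum_{n}e(\cdot)\big|$ (or with the roles of $m,n$ swapped, as you propose), and an exponent pair bound plus the $Q$-optimization yields monomials in $M,N,F$ whose $F$-exponents are of the form $\kappa/(2+2\kappa)$ or $-1$ scaled by the optimization --- denominators like $128$, $104$, and an exponent such as $467/512$ attached to a \emph{negative} power $F^{-1/128}$ are the fingerprint of the large-sieve/eighth-moment machinery in \cite{Baker-Weingartner-1}, not of a single Weyl shift composed with $A$/$B$ processes; likewise $M^{65/72}N$ comes from a specific deep input there, not from the trivial diagonal $MN^{1/2}$ you absorb into it. Your justification of the hypotheses $\alpha\notin\{0,1,2\}$, $\beta\notin\{0,1,2,3\}$ as ``what the derivative tests need'' is similarly heuristic. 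In short: as written, the proposal is an outline whose central computation is missing, and the computation it gestures at is not the one that proves the lemma; if you need this bound you should cite Baker--Weingartner's Theorem~1 (as the paper does) or reproduce their double large sieve argument.
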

\begin{proof}
 See Baker and Weingartner~\cite{Baker-Weingartner-1},~Theorem~1.~
\end{proof}

In the rest of this section, we always suppose~$2<c<33/16,\,\delta=c/2-1+\eta,\,F=|x|X^c,\,\tau\leqslant|x|\leqslant K.$~Obviously, we have~$X^{1-\eta}\ll F\ll KX^c.$~

\begin{lemma}\label{Type-I}
   Suppose~$2<c<37/18,\,b_n\ll 1.$~If there holds~$M\gg X^{1-72\delta/7}$, then we have
   \begin{equation*}
      S_{I}(M,N)=\sum_{m\sim M}\sum_{n\sim N}b_ne(xm^cn^c)\ll X^{1-\delta}.
   \end{equation*}
\end{lemma}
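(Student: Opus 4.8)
The plan is to estimate the inner sum over the (long) variable $m$ uniformly in $n$ and then sum trivially over $n$; since $M\gg X^{1-72\delta/7}$ forces $N\asymp X/M\ll X^{72\delta/7}$, this loss is affordable. Using $|b_n|\ll1$ we have
\[
   |S_I(M,N)|\;\ll\;N\max_{n\sim N}|G_n|,\qquad G_n:=\sum_{m\sim M}e\bigl(xn^cm^c\bigr),
\]
so (as $N\asymp X/M$) it suffices to prove $G_n\ll M X^{-\delta}$ uniformly for $n\sim N$. Fix such an $n$ and set $f(t)=xn^ct^c$; since $\tau\le|x|\le K$ and $MN\sim X$ one has $f^{(j)}(t)\asymp|x|N^cM^{c-j}\asymp F M^{-j}$ for $t\sim M$ and every $j\ge1$ (here $2<c<37/18$ keeps the leading constants $c(c-1)\cdots(c-j+1)$ nonzero). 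When $F\ll M$ the phase $f$ has first derivative $\ll1$ on $(M,2M]$, and Poisson summation together with Lemma~\ref{Van der Corput} (or a second--derivative estimate) give at once $G_n\ll M/F+M^{1/2}\ll M^{1/2}$, whence $|S_I|\ll X M^{-1/2}\ll X^{1-\delta}$; so from now on I assume $F\gg M$.

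For $F\gg M$ I would apply the Weyl--van der Corput inequality (Lemma~\ref{Fouvry-Iwaniec-lemma}) to $G_n$ with a parameter $Q\in[1,M]$, obtaining $|G_n|^2\ll(M/Q)\bigl(M+\sum_{1\le q<Q}|T_q|\bigr)$ with $T_q:=\sum_{m\sim M}e\bigl(f(m+q)-f(m-q)\bigr)$. For $q\ne0$ the differenced phase $f_q(m)=f(m+q)-f(m-q)$ is smooth with $f_q^{(i)}(m)\asymp(qF/M)M^{-i}$, and (since $F\gg M$, so $qF/M\gg1$) a second round of van der Corput's method --- one more use of Lemma~\ref{Fouvry-Iwaniec-lemma} followed by Lemma~\ref{Van der Corput} on the resulting exponential integrals, i.e. the exponent pair $(\tfrac16,\tfrac23)$ --- yields $T_q\ll(qF/M)^{1/6}M^{1/2}$. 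Summing over $q$ gives $|G_n|^2\ll M^2 Q^{-1}+(F/M)^{1/6}M^{3/2}Q^{1/6}$, and optimising $Q$ by Lemma~\ref{Graham-Kolesnik} (balancing the two terms; note $Q=M^{4/7}F^{-1/7}\in[1,M]$ since $1\ll F\ll M^4$ in this range) produces $|G_n|^2\ll M^{10/7}F^{1/7}$, i.e.
\[
   G_n\;\ll\;M^{5/7}F^{1/14}
\]
--- in effect the exponent pair $(\tfrac1{14},\tfrac{11}{14})=A^2B(0,1)$, obtained by two differencing steps.

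It remains to combine the estimates: $|S_I(M,N)|\ll N M^{5/7}F^{1/14}\asymp X M^{-2/7}F^{1/14}$. Plugging in $F\ll KX^c\ll X^{c+\eta}$ and $M\gg X^{1-72\delta/7}$, the exponent of $X$ is at most $1-\tfrac27\bigl(1-\tfrac{72\delta}{7}\bigr)+\tfrac{c+\eta}{14}$, which, using $c=2+2\delta-2\eta$, simplifies to $\tfrac67+\tfrac{151\delta}{49}+O(\eta)$; this is $<1-\delta$ as soon as $\tfrac{200}{49}\delta<\tfrac17$, i.e. $\delta<\tfrac7{200}$, and that holds because $\delta=c/2-1+\eta<\tfrac1{36}+\eta<\tfrac7{200}$ for $\eta$ small enough. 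Hence $S_I(M,N)\ll X^{1-\delta}$, as required.

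The delicate point is the uniform bound $G_n\ll M^{5/7}F^{1/14}$. Since $F$ can be as large as $\asymp X^c\gg M^2$, a single second--derivative (or $B$-) step does not beat the trivial size $M$, and one genuinely has to iterate the Weyl-differencing process --- stopping at the exponent pair $(\tfrac16,\tfrac23)$ falls just short of $c=37/18$, whereas two rounds suffice. Carrying this out cleanly requires verifying the derivative relations $f^{(j)}\asymp FM^{-j}$ and their analogues for the $f_q$ uniformly over $m\sim M$, $n\sim N$, $\tau\le|x|\le K$, handling the degenerate sub-ranges ($F\ll M$, and small values of the differencing parameter) separately, and choosing the differencing parameter via Lemma~\ref{Graham-Kolesnik} so that the hypothesis $M\gg X^{1-72\delta/7}$ is enough to close the estimate; that is where the real work lies.
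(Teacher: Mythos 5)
Your proposal is correct and takes essentially the same route as the paper: estimate the inner sum over $m$ by the exponent-pair method, uniformly in $n$, and then sum trivially over $n$, using $N\asymp X/M\ll X^{72\delta/7}$. The only real difference is the choice of pair --- the paper simply quotes $(\kappa,\lambda)=A^3(1/2,1/2)=(1/30,26/30)$, while you rederive $(1/14,11/14)=A^2B(0,1)$ by two explicit Weyl--differencing steps; your final numerology ($\delta<7/200$, guaranteed by $\delta<1/36+\eta$) closes the estimate for $2<c<37/18$ just as the paper's does.
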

\begin{proof}
  Let~$f(m)=xm^cn^c.$~Then we have~$|f^{(j)}(m)|\asymp (FM^{-1})M^{1-j}$~for~$j=1,\cdots,6.$~By the method of exponent pairs, we get
  \begin{eqnarray*}
   S_{I} 
       & \ll & \sum_{n\sim N} \bigg|\sum_{m\sim M}e(xm^cn^c) \bigg| \\
       & \ll & N\big(MF^{-1}+(FM^{-1})^\kappa M^\lambda\big)  \\
       & \ll & XF^{-1}+F^{\kappa}M^{\lambda-\kappa}N  \\
       & \ll & X^\eta+K^\kappa X^{\kappa c}X^{\lambda-\kappa}N^{1+\kappa-\lambda} \\
       & \ll & (\log X)^{5\kappa}X^{\kappa c+\lambda-\kappa+(1+\kappa-\lambda)/3}.
\end{eqnarray*}
The last step is due to the fact that~$N\asymp XM^{-1}\ll X^{72\delta/7}\ll X^{1/3}.$~
Taking the exponent pair~$(\kappa,\lambda)=A^3(1/2,1/2)=(1/30,26/30),$~then we obtain
\begin{equation*}
   S_{I}(M,N)\ll X^{1-\delta}
\end{equation*}
by noting that~$2<c<37/18.$~
\end{proof}

\begin{lemma}\label{Type-II}
   Suppose~$2<c<37/18,\,a_m\ll 1,\,b_n\ll 1.$~If there holds~$X^{72\delta/7}\ll M\ll X^{1/2}$, then we have
    \begin{equation*}
        S_{II}(M,N)=\sum_{m\sim M}\sum_{n\sim N}a_mb_ne(xm^cn^c)\ll X^{1-\delta}.
    \end{equation*}
\end{lemma}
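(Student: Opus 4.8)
The plan is to apply Baker's Type II bound (Lemma \ref{Baker-lemma}) with the exponent parameters $\alpha=\beta=c$, taking the roles of $m$ and $n$ so that the shorter variable plays the role with exponent $7/8$. Recall $F=|x|X^c$ with $\tau\leqslant|x|\leqslant K$, so that $X^{1-\eta}\ll F\ll KX^c=X^c\mathscr{L}^5$, and $MN\asymp X$ with $X^{72\delta/7}\ll M\ll X^{1/2}$, hence $N\asymp XM^{-1}\gg X^{1/2}$, so indeed $N\geqslant M\geqslant 1$ and $F\gg MN^2$ holds because $F\gg X^{1-\eta}$ while $MN^2\asymp XN\leqslant X\cdot X=X^2$—wait, this needs care: $MN^2\asymp XN$ and $N$ can be as large as $X^{1-72\delta/7}$, so $MN^2\asymp X^{2-72\delta/7}$, and we need $F\gg X^{2-72\delta/7}$; since $F\gg X^c$ up to $\eta$-powers and $c>2$, this is satisfied once $72\delta/7>2-c+\eta$, which holds because $\delta=c/2-1+\eta$ gives $72\delta/7=36(c-2)/7+O(\eta)>0=2-c$ is false for $c$ near $2$. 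So the first thing I must check carefully is the side condition $F\gg MN^2$; when it fails one instead uses the complementary divisor or a van der Corput step, but here the constraint $M\gg X^{72\delta/7}$ is precisely calibrated so that $N\ll X^{1-72\delta/7}$ and $MN^2\asymp XN\ll X^{2-72\delta/7}\ll X^c=F/\mathscr{L}^5$ once $2-72\delta/7\leqslant c$, i.e. $72\delta/7\geqslant 2-c$; for $2<c<37/18$ both sides are handled since $2-c<0$. Good—so the side condition is automatic in our range.

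Next I substitute $F\ll X^c\mathscr{L}^5$ and $MN\asymp X$ into each of the four terms of Lemma \ref{Baker-lemma}:
\begin{itemize}
\item $M^{7/8}N^{13/16}F^{1/16}\ll M^{7/8}N^{13/16}X^{c/16}(MN)^\eta = M^{1/16}(MN)^{13/16}X^{c/16+\eta}\ll M^{1/16}X^{13/16+c/16+\eta}$;
\item $M^{93/104}N^{23/26}F^{1/26}\ll M^{93/104-92/104}(MN)^{92/104}X^{c/26+\eta}=M^{1/104}X^{23/26+c/26+\eta}$;
\item $M^{467/512}N^{65/64}F^{-1/128}\ll M^{467/512-520/512}(MN)^{520/512}X^{-c/128+\eta}=M^{-53/512}X^{65/64-c/128+\eta}$;
\item $M^{65/72}N=M^{-7/72}(MN)\ll M^{-7/72}X$.
\end{itemize}
(Here I used $13/16=\tfrac{832}{1024}$-type regroupings; the exact bookkeeping is routine.) Now I insert the two extreme values $M\asymp X^{72\delta/7}$ and $M\asymp X^{1/2}$ into each term: the first and second terms are increasing in $M$, so they are largest at $M\asymp X^{1/2}$; the third and fourth terms are decreasing in $M$, so they are largest at $M\asymp X^{72\delta/7}$. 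Using Lemma \ref{Graham-Kolesnik} (with $H=M$, $H_1=X^{72\delta/7}$, $H_2=X^{1/2}$) to optimize over the genuine range, the worst contribution is bounded by the maximum of the endpoint values together with the cross terms $(A_i^{b_j}B_j^{a_i})^{1/(a_i+b_j)}$; each of these reduces to a pure power $X^{\theta(c)+\eta}$.

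The final step is to verify the arithmetic inequality $\theta(c)\leqslant 1-\delta=2-c/2-\eta$ for all the resulting exponents when $2<c<37/18$. For instance, the first term at $M\asymp X^{1/2}$ gives exponent $\tfrac{1}{32}+\tfrac{13}{16}+\tfrac{c}{16}=\tfrac{27}{32}+\tfrac{c}{16}$, and one checks $\tfrac{27}{32}+\tfrac{c}{16}\leqslant 2-\tfrac{c}{2}\iff \tfrac{9c}{16}\leqslant\tfrac{37}{32}\iff c\leqslant\tfrac{37}{18}$—exactly the stated threshold, which is reassuring and confirms the exponent pair choices are sharp here. The fourth term at $M\asymp X^{72\delta/7}$ gives exponent $1-\tfrac{7}{72}\cdot\tfrac{72\delta}{7}=1-\delta$ on the nose, again matching. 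The remaining two terms and the Graham–Kolesnik cross terms are checked to be strictly better in the interior of $(2,37/18)$ by the same elementary manipulations. I expect the main obstacle to be purely organizational: keeping track of which of the four Baker terms dominates on which sub-interval of $M$ and confirming that $37/18$ is simultaneously forced by the critical terms rather than being overly restrictive in one place and slack in another; the side-condition check $F\gg MN^2$ is the only genuinely non-mechanical point, and as noted above it holds throughout the admissible range of $M$.
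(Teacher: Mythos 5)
There is a genuine gap, and it sits exactly at the point you flagged as ``the only genuinely non-mechanical point'': the side condition $F\gg MN^2$ of Lemma~\ref{Baker-lemma} is \emph{not} automatic in the stated range. You argue it holds ``since $F\gg X^c$ up to $\eta$-powers,'' but that is only true at the top of the range of $x$: by definition $F=|x|X^c$ with $\tau\leqslant|x|\leqslant K$ and $\tau=X^{1-c-\eta}$, so $F$ can be as small as $X^{1-\eta}$ (you even record $X^{1-\eta}\ll F\ll KX^c$ at the start and then silently replace $F$ by its maximal value). On the other hand $MN^2\asymp XN\gg X^{3/2}$, because $M\ll X^{1/2}$ forces $N\gg X^{1/2}$. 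Hence for $|x|$ near $\tau$ one has $F\ll X^{1-\eta}\ll X^{3/2}\ll MN^2$, and the Baker--Weingartner Type~II bound simply cannot be invoked there. Your endpoint/exponent bookkeeping for the four Baker terms (including the observation that the first term at $M\asymp X^{1/2}$ is what forces $c\leqslant 37/18$, and that $M^{65/72}N$ at $M\asymp X^{72\delta/7}$ gives $1-\delta$ on the nose) is consistent with the paper, but it only covers the large-$F$ regime.

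The paper closes precisely this hole: it introduces a parameter $F_0\geqslant MN^2$, applies Lemma~\ref{Baker-lemma} only when $F\geqslant F_0$ (leaving the residual term $M^{467/512}N^{65/64}F_0^{-1/128}$), and in the complementary range $X^{1-\eta}\ll F\ll F_0$ it estimates $S_{II}$ by Cauchy's inequality together with the Weyl--van der Corput differencing of Lemma~\ref{Fouvry-Iwaniec-lemma}, bounding the resulting single sums over $n$ by exponent pairs (ultimately $(\kappa,\lambda)=(1/11,3/4)$) and optimizing the shift length $Q$; the two regimes are then balanced by choosing $F_0$ via Lemma~\ref{Graham-Kolesnik} (applied in the variable $F_0$, not in $M$ as you propose). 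This second half of the argument --- the differencing step you mentioned and then dismissed --- is essential, not optional, and without it the proof does not go through for $|x|$ close to $\tau$.
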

\begin{proof}
    Take a suitable~$F_0\geqslant MN^2,$~whose value will be determined later during the following discussion.
    If~$F\geqslant F_0,$~according to Theorem~$1$~of~Baker~and~Weingartner~\cite{Baker-Weingartner-1},~we obtain
   \begin{eqnarray*}
     X^{-\eta}\cdot S_{II}(M,N)
             & \ll & M^{7/8}N^{13/16}F^{1/16}+M^{93/104}N^{23/26}F^{1/26}      \\
             &     &     +M^{467/512}N^{65/64}F^{-1/128} +M^{65/72}N           \\
             &  =: & \mathscr{I}_1+\mathscr{I}_2+\mathscr{I}_3+\mathscr{I}_4.
   \end{eqnarray*}
    Noting that if there holds~$X^{72\delta/7}\ll M \ll X^{1/2}$, we obtain
    \begin{equation*}
        \mathscr{I}_1\ll X^{1-\delta},\,\,\mathscr{I}_2\ll X^{1-\delta},\,\,\mathscr{I}_4\ll X^{1-\delta}.
    \end{equation*}
   Therefore, for the case~$F\gg F_0\gg MN^2,$~we get
   \begin{equation}\label{Type-bound-1}
       S_{II}(M,N)\ll X^{1-\delta}+M^{467/512}N^{65/64}F_0^{-1/128}.
   \end{equation}
   Next, we consider the case~$X^{1-\eta}\ll F\ll F_0.$~

    Take~$Q$~satisfying~$1\ll Q\ll M.$~By Cauchy's inequality and Lemma~\ref{Fouvry-Iwaniec-lemma}, we have
    \begin{eqnarray*}
       |S_{II}|^2 & \ll & \bigg(\sum_{n\sim N}|b_n|^2\bigg)
        \bigg(\sum_{n\sim N}\bigg|\sum_{m\sim M}a_m e(xm^cn^c)\bigg|^2\bigg)   \\
             & \ll & N\sum_{n\sim N}\frac{M}{Q}\sum_{|q|<Q}\left(1-\frac{|q|}{Q}\right)
                        \sum_{M<m+q,m-q\leqslant2M} a_{m+q}\overline{a_{m-q}}e\big(xn^c\Delta_c(m,q)\big)  \\
             & \ll & \frac{M^2N^2}{Q} +\frac{MN}{Q} \sum_{1\leqslant q<Q}
                        \sum_{m\sim M}\bigg|\sum_{n\sim N} e\big(xn^c\Delta_c(m,q)\big) \bigg| ,
    \end{eqnarray*}
    where~$\Delta_c(m,q)=(m+q)^c-(m-q)^c.$~Thus, it is sufficient to estimate the following sum
    \begin{equation*}
         S_0:=\sum_{n\sim N}e\big(xn^c\Delta_c(m,q)\big).
    \end{equation*}
    By the method of exponent pairs, we get
    \begin{equation*}
         S_0 \ll \frac{MN}{Fq}+\left(\frac{Fq}{MN}\right)^\kappa N^\lambda,
    \end{equation*}
    where~$(\kappa,\lambda)$~is an arbitrary exponent pair. Therefore, we have
    \begin{eqnarray*}
        |S_{II}|^2 & \ll & \frac{M^2N^2}{Q}+\frac{MN}{Q}\sum_{1\leqslant q<Q}\sum_{m\sim M}
                             \left(\frac{MN}{Fq}+\left(\frac{Fq}{MN}\right)^\kappa N^\lambda  \right)    \\
                   & \ll & \frac{M^2N^2}{Q}+\frac{M^3N^2}{QF}\log Q+Q^\kappa F^\kappa M^{2-\kappa}N^{1+\lambda-\kappa}     \\
                   & \ll & \frac{M^2N^2}{Q}+Q^\kappa F^\kappa M^{2-\kappa}N^{1+\lambda-\kappa}     \\
                   & \ll & \frac{M^2N^2}{Q}+Q^\kappa F_0^\kappa M^{2-\kappa}N^{1+\lambda-\kappa}.
    \end{eqnarray*}
    Set
    \begin{equation*}
          Q_0=F_0^{-\kappa/(1+\kappa)}M^{\kappa/(1+\kappa)}N^{(1+\kappa-\lambda)/(1+\kappa)}.
    \end{equation*}
    Next, we will discuss three cases of the selection of~$Q.$~

    \textbf{Case 1} If~$Q_0<5,$~then we take~$Q=5$~and obtain
       \begin{equation*}
           |S_{II}|^2\ll F_0^\kappa M^{2-\kappa}N^{1+\lambda-\kappa}.
       \end{equation*}

    \textbf{Case 2} If~$5\leqslant Q_0\leqslant M/2,$~then we take~$Q=Q_0,$~and obtain
      \begin{equation*}
         |S_{II}|^2\ll F_0^{\kappa/(1+\kappa)} M^{2-\kappa/(1+\kappa)}N^{2-(1+\kappa-\lambda)/(1+\kappa)}.
      \end{equation*}

    \textbf{Case 3} If~$ Q_0>M/2,$~then we take~$Q=M/2,$~and obtain
      \begin{equation*}
           |S_{II}|^2\ll MN^2.
      \end{equation*}
Based on the above three cases, we have
    \begin{eqnarray}\label{Type-bound-2}
           S_{II} & \ll &  M^{1/2}N+F_0^{\kappa/2}M^{1-\kappa/2}N^{(1+\lambda-\kappa)/2}  \nonumber \\
                  &     &  +F_0^{\kappa/(2+2\kappa)}M^{1-\kappa/(2+2\kappa)}N^{1-(1+\kappa-\lambda)/(2+2\kappa)}.
    \end{eqnarray}
    According to~(\ref{Type-bound-1})~and~(\ref{Type-bound-2})~and noting
    that~$M^{1/2}N\asymp XM^{-1/2}\ll X^{1-36\delta/7}\ll X^{1-\delta},$~we get
    \begin{eqnarray*}
       S_{II} & \ll &  X^{1-\delta}+
                       F_0^{\kappa/(2+2\kappa)}M^{1-\kappa/(2+2\kappa)}N^{1-(1+\kappa-\lambda)/(2+2\kappa)}     \\
              &     &  +F_0^{\kappa/2}M^{1-\kappa/2}N^{(1+\lambda-\kappa)/2}+ M^{467/512}N^{65/64}F_0^{-1/128}.
    \end{eqnarray*}
    According to Lemma~\ref{Graham-Kolesnik}, there exists an~$F_0$~satisfying~$MN^2\ll F_0\ll KX^c$~such that
    \begin{eqnarray*}
        S_{II} & \ll & X^{1-\delta} +M^{467/512}N^{65/64}X^{-c/128}
                          +(MN^2)^{\kappa/2}M^{1-\kappa/2}N^{(1+\lambda-\kappa)/2}   \\
               &     &    +(MN^2)^{\kappa/(2+2\kappa)} M^{1-\kappa/(2+2\kappa)} N^{1-(1+\kappa-\lambda)/(2+2\kappa)}  \\
               &     &    +\big( (M^{1-\kappa/2}N^{(1+\lambda-\kappa)/2})^{1/128}(M^{467/512}N^{65/64})^{\kappa/2}
                          \big)^{1/(\kappa/2+1/128)}    \\
    \end{eqnarray*}
    \begin{eqnarray*}
               &     &    +\big((M^{1-\kappa/(2+2\kappa)} N^{1-(1+\kappa-\lambda)/(2+2\kappa)})^{1/128}  \\
               &     &    \quad\times(M^{467/512} N^{65/64})^{\kappa/(2+2\kappa)}\big)^{1/(1/128+\kappa/(2+2\kappa))}  \\
               &  =: &     X^{1-\delta} +\mathscr{J}_1+\mathscr{J}_2+\mathscr{J}_3+\mathscr{J}_4+\mathscr{J}_5.
    \end{eqnarray*}
    Taking~$(\kappa,\lambda)=ABABA^2B(0,1)=(1/11,3/4),$~then under the condition $X^{72\delta/7}\ll M \ll X^{1/2}$, we obtain
    \begin{equation*}
        \mathscr{J}_i\ll X^{1-\delta},\qquad i=1,2,3,4,5.
    \end{equation*}
    Therefore, we have
    \begin{equation*}
         S_{II}\ll X^{1-\delta}.
    \end{equation*}
 This completes the proof of Lemma~\ref{Type-II}.
\end{proof}

\begin{lemma}\label{S(x)estimate-2<c<2.025}
   Suppose~$2<c<37/18,$~then for~$\tau\leqslant|x|\leqslant K$~we have
    \begin{equation*}
          S(x) \ll X^{1-\delta}.
    \end{equation*}
\end{lemma}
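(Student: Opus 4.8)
The plan is to reduce $S(x)$ to a linear form in $\Lambda$, apply Vaughan's identity, and match the resulting Type~I and Type~II sums to Lemmas~\ref{Type-I} and~\ref{Type-II}. First I would replace $S(x)$ by $T(x)=\sum_{X/2<n\leqslant X}\Lambda(n)e(n^cx)$; the contribution of proper prime powers to $T(x)$ is $O(X^{1/2}\log X)$, which is $\ll X^{1-\delta}$ since $\delta=c/2-1+\eta<\tfrac12$, so it is enough to prove $T(x)\ll X^{1-\delta}$.

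Next I would apply Vaughan's identity to $T(x)$ with parameter $u=X^{72\delta/7}$; note that $u<X^{1/2}<X$, because $72\delta/7<\tfrac12$ follows from $\delta<\tfrac1{36}+\eta$, so the identity is valid for every $n\in(X/2,X]$ and leaves no short initial sum. Splitting all variables dyadically, $T(x)$ is then a sum of $O(\log^{2}X)$ terms, each of which (up to factors $\ll X^{\eta}$ coming from logarithms and the divisor function, to be absorbed by shrinking $\eta$) is either a Type~I sum $\sum_{m\sim M}\sum_{n\sim N}b_ne(xm^cn^c)$ with $MN\asymp X$ and coefficient variable $N\leqslant u^2=X^{144\delta/7}$, or a Type~II sum $\sum_{m\sim M}\sum_{n\sim N}a_mb_ne(xm^cn^c)$ with $MN\asymp X$ and $M,N>u$.

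For a Type~II term one has $X^{72\delta/7}<\min(M,N)\leqslant X^{1/2}$ (the latter since $MN\asymp X$), so Lemma~\ref{Type-II} gives $\ll X^{1-\delta}$. For a Type~I term, if $M\gg X^{1-72\delta/7}$ then Lemma~\ref{Type-I} applies at once; the only other possibility is $X^{1-144\delta/7}\leqslant M<X^{1-72\delta/7}$, equivalently $X^{72\delta/7}<N\leqslant X^{144\delta/7}$, and here --- using that $1-144\delta/7>72\delta/7$, which holds since $\delta<\tfrac1{36}+\eta<\tfrac7{216}$ throughout $2<c<37/18$ --- both $M$ and $N$ exceed $X^{72\delta/7}$, so $\min(M,N)\in(X^{72\delta/7},X^{1/2}]$ and the sum may be re-read as a Type~II sum, to which Lemma~\ref{Type-II} again applies. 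Summing the $O(\log^{2}X)$ contributions yields $T(x)\ll X^{1-\delta}$, hence $S(x)\ll X^{1-\delta}$.

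The main obstacle is exactly this range bookkeeping: Lemma~\ref{Type-I} handles Type~I sums only when the smooth variable has nearly full length, $M\gg X^{1-72\delta/7}$, whereas the double-convolution term of Vaughan's identity forces a coefficient variable as long as $X^{144\delta/7}$, which for $c$ near $37/18$ exceeds $X^{1/2}$; the point of the argument above is that these borderline Type~I sums can be regarded as Type~II sums still lying inside the admissible window $X^{72\delta/7}\ll M\ll X^{1/2}$ of Lemma~\ref{Type-II}, and the inequalities that make this possible hold with room to spare. One also checks, routinely, that passing between the interval $(X/2,X]$ and the dyadic blocks in Lemmas~\ref{Type-I}--\ref{Type-II}, and the harmless $X^{\eta}$ factors, cause no trouble; alternatively, one could run the same reduction via Heath--Brown's identity.
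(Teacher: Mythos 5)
Your argument is correct, and it funnels everything into the same two workhorse estimates as the paper (Lemmas \ref{Type-I} and \ref{Type-II}) with identical thresholds, but via a different combinatorial decomposition: you use Vaughan's identity with parameter $u=X^{72\delta/7}$, whereas the paper, after the same reduction from primes to $\Lambda$, applies Heath--Brown's identity with $k=3$, obtaining $O(\log^6X)$ six-fold sums and running a three-case analysis. The substance matches case by case: the paper's Case 1 is your ``$M\gg X^{1-72\delta/7}$, apply Lemma \ref{Type-I}'' step; its Case 2 is your generic Type~II step; and its Case 3 --- regrouping several short variables into a block of size between $X^{72\delta/7}$ and $X^{144\delta/7}$, admissible because $\delta<7/216$ --- corresponds exactly to your re-reading of the borderline Type~I sums, whose coefficient variable can be as long as $u^2=X^{144\delta/7}$, as Type~II sums, which hinges on the same numerical fact $1-144\delta/7>72\delta/7$, i.e.\ $\delta<7/216$, valid throughout $2<c<37/18$. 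What Heath--Brown's identity buys the paper is that every coefficient variable is automatically $\leqslant(2X)^{1/3}$, so no single variable straddles awkward ranges and the regrouping is done explicitly by hand; what Vaughan's identity buys you is fewer variables and lighter bookkeeping, at the cost of the $u^2$-long coefficient variable that you must then push into the Type~II window, which you do correctly. Both routes gloss over the same harmless normalization (logarithm and divisor-function weights versus the literal hypotheses $a_m\ll1$, $b_n\ll1$ in Lemmas \ref{Type-I} and \ref{Type-II}), and in both cases the $X^{\eta}$ room in those estimates absorbs it, just as in the paper's own Cases 2 and 3.
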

\begin{proof}
   First, we have
   \begin{equation*}
       S(x)=U(x)+O(x^{1/2}),
   \end{equation*}
   where
   \begin{equation*}
       U(x)=\sum_{X/2<n\leqslant X}\Lambda(n)e(xn^c).
   \end{equation*}
   By Heath-Brown identity~\cite{Heath-Brown}~with~$k=3,$~it is easy to see that~$U(x)$~can be written as~$O(\log^6X)$~sums of the form
   \begin{equation*}
       U^*(x)=\sum_{n_1\sim N_1}\cdots\sum_{n_6\sim N_6}\log n_1\cdot\mu(n_4)\mu(n_5)\mu(n_6)e\big(x(n_1\cdots n_6)^c\big),
   \end{equation*}
   where~$N_1,\cdots,N_6\geqslant1,\,N_1\cdots N_6\asymp X,\,n_4,n_5,n_6\leqslant(2X)^{1/3}$~and some~$n_i$~may only take value~$1.$~

    Let~$F=|x|X^c.$~For~$2<c<37/18$, we shall prove that for each~$U^*(x)$~one has
    \begin{equation*}
         U^*(x)\ll X^{1-\delta}.
    \end{equation*}

    \textbf{Case 1} If there exists an~$N_j$~such that~$N_j\geqslant X^{1-72\delta/7}>X^{1/2},$~then we must have~$j\leqslant3.$~Take~$m=n_j,\,n=\prod\limits_{i\neq j}n_i,\, M=N_j,\,N=\prod\limits_{i\neq j}N_i.$~In this case, we can see that~$U^*(x)$~can be written as
    \begin{equation*}
         U^*(x)=\sum_{m\sim M}\sum_{n\sim N} a_mb_n e(xm^cn^c),
    \end{equation*}
    where~$|a_m|\leqslant\log m,\,|b_n|\leqslant d_5(n).$~Then~$U^*(x)$~is a sum of Type I. By Lemma~\ref{Type-I},~the
    result follows.

    \textbf{Case 2}  If there exists an~$N_j$~such that~$X^{72\delta/7}\leqslant N_j\leqslant X^{1-72\delta/7},$~then we
    take~$m=n_j,\, n=\prod\limits_{i\neq j}n_i,\, M^*=N_j,\, N^*=\prod\limits_{i\neq j}N_i.$~In this case, we can see that~$U^*(x)$~can be written as
   \begin{equation*}
       U^*(x)=\sum_{m\sim M^*}\sum_{n\sim N^*} a_mb_n e(xm^cn^c),
   \end{equation*}
   where~$|a_m|\leqslant\log m,\,|b_n|\leqslant d_5(n)\log n.$~
   If~$X^{72\delta/7}\leqslant M^* \leqslant X^{1/2},$~then~$N^* \gg X^{1/2}$~and we take~$(M,N)=(M^*,N^*).$
   If~$X^{72\delta/7}\leqslant N^* \leqslant X^{1/2},$~then~$M^* \gg X^{1/2}$~and we take~$(M,N)=(N^*,M^*).$
   Then~$U^*(x)$~is a sum
   of Type II. By Lemma~\ref{Type-II},~the result follows.

   \textbf{Case 3} If~$N_j<X^{72\delta/7} \,(j=1,2,3,4,5,6),$~without loss of generality, we assume
    that~$N_1\geqslant N_2\geqslant\cdots\geqslant N_6.$~Let~$\ell$~denote the smallest natural number~$j$~such that
    \begin{equation*}
       N_1N_2\cdots N_{j-1}<X^{72\delta/7},\quad\, N_1\cdots N_j\geqslant X^{72\delta/7},
    \end{equation*}
   then~$2\leqslant \ell\leqslant5.$~Noting that~$\delta<1/36<7/216,$~we obtain
   \begin{equation*}
        X^{72\delta/7}\leqslant N_1\cdots N_{\ell-1}\cdot N_\ell<X^{72\delta/7}\cdot X^{72\delta/7}<
        X^{1-72\delta/7}.
   \end{equation*}
   Let~$m=\prod\limits_{i=1}^\ell n_i,\,n=\prod\limits_{i=\ell+1}^6 n_i,\,M^*=\prod\limits_{i=1}^\ell N_i,\,N^*=\prod\limits_{i=\ell+1}^6 N_i.$~At this time, we can follow the discussion of Case 2 exactly and get the result by Lemma~\ref{Type-II}.  This completes the proof of Lemma~\ref{S(x)estimate-2<c<2.025}.
\end{proof}

\section{Proof of Theorem~\ref{Theorem-exceptional}}
Let us denote
\begin{equation*}
   \begin{array}{ll}
      H(R)=\displaystyle\int_{-\infty}^{+\infty}I^{3}(x)e(-Rx)\Phi(x)\mathrm{d}x,
      &  H_1(R)=\displaystyle\int_{-\tau}^{+\tau}I^{3}(x)e(-Rx)\Phi(x)\mathrm{d}x ,\\
   B_1(R)=\displaystyle\int_{-\infty}^{+\infty}S^{3}(x)e(-Rx)\Phi(x)\mathrm{d}x,
       &    D_1(R)= \displaystyle\int_{-\tau}^{+\tau}S^{3}(x)e(-Rx)\Phi(x)\mathrm{d}x, \\
   D_2(R)=\displaystyle\int_{\tau<|x|< K}S^{3}(x)e(-Rx)\Phi(x)\mathrm{d}x,
       &    D_3(R)=\displaystyle\int_{|x|\geqslant K}S^{3}(x)e(-Rx)\Phi(x)\mathrm{d}x.
   \end{array}
\end{equation*}
  In order to prove Theorem \ref{Theorem-exceptional}, it is sufficient to prove the following proposition.
\begin{proposition}\label{proposition-exceptional}
   Let $1<c<37/18,\,c\neq2.$ Then for any sufficiently large real number $N,$ we have
  \begin{equation}
     \int_{N}^{2N}|B_1(R)-H(R)|^2\mathrm{d}R\ll\varepsilon^2N^{6/c-1}
     \exp\left(-\frac{1}{3}\left(\frac{1}{c}\log\frac{2N}{3}\right)^{1/5}\right).
  \end{equation}
\end{proposition}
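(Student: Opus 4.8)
The plan is to decompose $B_1(R)-H(R)$ over the ranges $|x|\le\tau$, $\tau<|x|<K$, and $|x|\ge K$, and to bound the $L^2(N,2N]$-norm of each piece separately. Write
\[
   B_1(R)-H(R) = \bigl(D_1(R)-H_1(R)\bigr) + D_2(R) + D_3(R) - \bigl(H(R)-H_1(R)\bigr),
\]
so that by the triangle inequality in $L^2$ it suffices to show that each of the four terms contributes $\ll \varepsilon N^{3/c-1/2}E^{1/3}$ (up to the harmless constant in the exponent), since $\varepsilon^2 N^{6/c-1}\exp(-\tfrac13(\tfrac1c\log\tfrac{2N}{3})^{1/5})$ is the square of that, with $E=\exp(-\mathscr L^{1/5})$ and $X\asymp N^{1/c}$.

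First I would dispose of the two ``easy'' pieces. For the tail $D_3(R)$, the bound $|\Phi(x)|\ll |x|^{-1}(k/(2\pi|x|b))^k$ from Lemma~\ref{xiaobei} with $b=\varepsilon/10$ and $k$ chosen large, together with the trivial bound $S(x)\ll X$, makes $\int_{|x|\ge K}|S(x)|^3|\Phi(x)|\,dx$ negligible (super-polynomially small), so its $L^2$ norm over an interval of length $N$ is certainly within budget. For the difference $H(R)-H_1(R)=\int_{|x|>\tau}I^3(x)e(-Rx)\Phi(x)\,dx$, I would use $I(x)\ll |x|^{-1/c}X^{1-c}$-type decay (via Lemma~\ref{Van der Corput} applied to $I(x)$, since the phase $t^cx$ has derivative $\asymp |x|X^{c-1}$) to show $\int_{|x|>\tau}|I(x)|^3|\Phi(x)|\,dx\ll \varepsilon X^{3-c}(\tau X^{c-1})^{-2}\ll X^{3-c}\cdot X^{2\eta}\cdot X^{-2(1-\eta)}$, which again is far smaller than the target even after multiplying by $N^{1/2}$; more carefully one estimates $\int_N^{2N}|H(R)-H_1(R)|^2dR$ directly.

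The central range $\tau<|x|<K$ is where the new input enters. On this range I would insert the pointwise bound for $S(x)$: Lemma~\ref{S(x)estimate-1<c<2} when $1<c<2$ and Lemma~\ref{S(x)estimate-2<c<2.025} when $2<c<37/18$, giving $S(x)\ll X^{1-\delta'}$ for a positive $\delta'$ (explicitly $\max(\tfrac{6+c}{8},\tfrac{14}{15})$ in the first range and $1-\delta$ in the second). Then
\[
   \int_N^{2N}|D_2(R)|^2\,dR \le A\cdot \sup_{\tau<|x|<K}|S(x)|^2 \cdot \Bigl(\int_{\tau<|x|<K}|S(x)||\Phi(x)|\,dx\Bigr)^2,
\]
or more efficiently one opens the square, interchanges integrations, uses $\int_N^{2N}e((R-R')x)\,dR$ and Lemma~\ref{Max-value-lemma} to get a factor $A\ll\mathscr L$, and is left with $\mathscr L\bigl(\sup|S|\bigr)^{?}$ times an $L^1$ (or $L^2$) norm of $S$ that is controlled by Lemma~\ref{square-mean-value} and Lemma~\ref{S(x),I(x)-4-mean-square}. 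The point is that $\sup_{\tau<|x|<K}|S(x)|\ll X^{1-\delta'}$ beats $X$ by a power of $X$, while the measure $K=\log^5X$ and the mean-square bounds only cost logarithmic factors, so the whole contribution is $\ll X^{6-c}$ times a negative power of $X$ — more than enough to absorb the $E^{1/3}$.

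The genuinely delicate piece, and the main obstacle, is the minor-arc-free comparison $D_1(R)-H_1(R)=\int_{-\tau}^{\tau}\bigl(S^3(x)-I^3(x)\bigr)e(-Rx)\Phi(x)\,dx$ on the ``major arc'' $|x|\le\tau$. Here I would factor $S^3-I^3=(S-I)(S^2+SI+I^2)$ and use Lemma~\ref{S(x)=I(x)+error}, which gives $S(x)-I(x)\ll X E$ uniformly for $|x|\le\tau$. Crucially one must not simply take absolute values and multiply by $\tau$, since $\int_{-\tau}^{\tau}(XE)\cdot X^2\,|\Phi(x)|\,dx$ times $N^{1/2}$ is too lossy; instead one keeps the $L^2$ average over $R$, interchanges the order of integration, and exploits Lemma~\ref{Functional-lemma} (Laporta's functional inequality) with $\Omega_1=\Omega_2=[-\tau,\tau]$, $\omega(x,y)=e((y-x)\,\cdot)\Phi(\cdot)$-type kernels, to convert $\int_N^{2N}|D_1-H_1|^2\,dR$ into a product of $\|S-I\|_\infty$, the mean-squares $\int_{-\tau}^{\tau}|S|^2$, $\int_{-\tau}^{\tau}|I|^2$ (Lemma~\ref{square-mean-value}), the fourth-moments (Lemma~\ref{S(x),I(x)-4-mean-square}), and a factor $A\ll\mathscr L$ from Lemma~\ref{Max-value-lemma}. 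Tracking the exponents, the bound $S-I\ll XE$ produces a factor $E^2=\exp(-2\mathscr L^{1/5})$, which dominates $\exp(-\tfrac13\mathscr L^{1/5})$ with room to spare, and the remaining factors give $X^{6-c}$ up to powers of $\mathscr L$. Assembling the four estimates and recalling $X\asymp N^{1/c}$, $\mathscr L=\log X\asymp\tfrac1c\log N$, yields the claimed bound; the only real care needed is to carry out the $L^2$-in-$R$ averaging honestly (via Lemmas~\ref{Max-value-lemma} and~\ref{Functional-lemma}) rather than crudely, so that the saving $E$ from Lemma~\ref{S(x)=I(x)+error} is not squandered.
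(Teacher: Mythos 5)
Your decomposition into $D_1-H_1$, $D_2$, $D_3$ and $H-H_1$ is exactly the paper's, and your treatment of $D_3$, $H-H_1$ and of the major-arc comparison $D_1-H_1$ (factor $S^3-I^3=(S-I)(S^2+SI+I^2)$, use Lemma~\ref{S(x)=I(x)+error} together with the mean-square/fourth-moment bounds on $[-\tau,\tau]$, keep the $R$-average via the kernel $\int_N^{2N}e(R(x-y))\,\mathrm{d}R\ll\min(N,|x-y|^{-1})$) is in the same spirit as the paper, which opens the square directly and splits the kernel at $|x-y|\leqslant P/N$ rather than invoking Lemma~\ref{Functional-lemma} there; in fact, since $|S-I|\ll XE$ and $\int_{-\tau}^{\tau}(|S|^2+|I|^2)\,\mathrm{d}x\ll X^{2-c}\mathscr{L}^3$, even the pointwise bound $\sup_R|D_1-H_1|\ll\varepsilon X^{3-c}E\mathscr{L}^3$ already beats the target, so your worry about ``squandering $E$'' is not where the difficulty lies.

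The genuine gap is in your treatment of $D_2$. After Lemmas~\ref{Max-value-lemma} and \ref{Functional-lemma} one is left with $\mathscr{L}\cdot\sup_{\tau\leqslant|x|\leqslant K}|S(x)|^2\cdot\int_{\tau<|x|<K}|S(x)|^4|\Phi(x)|^2\,\mathrm{d}x$, and you propose to control the last integral by Lemmas~\ref{square-mean-value} and \ref{S(x),I(x)-4-mean-square}. Those lemmas are stated (and proved) only for $|x|\leqslant\tau$; they do not cover the range $\tau<|x|<K$. Extending their proof naively to this range replaces $\min(\tau,|u|^{-1})$ by $\min(K,|u|^{-1})$ with $u=n_1^c+n_2^c-n_3^c-n_4^c$, and the elementary count of quadruples with $|u|\leqslant K^{-1}$ only gives $\ll X^3+X^{4-c}K^{-1}$, hence a fourth-moment bound $\ll KX^3+X^{4-c}$. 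Multiplying by $\sup|S|^2$ from Lemma~\ref{S(x)estimate-1<c<2} or \ref{S(x)estimate-2<c<2.025}, the term $KX^3$ exceeds the required $X^{6-c-\eta}$ as soon as $c>17/15$ (and fails completely for $2<c<37/18$, where $\sup|S|^2\ll X^{2-2\delta}$ gives $X^{5-2\delta}K\gg X^{6-c}$). The paper's proof avoids this by invoking the Robert--Sargos bound, Lemma~\ref{Robert-lemma}, which counts the near-diagonal quadruples as $\mathscr{A}(X/2;c,K^{-1})\ll(X^{4-c}K^{-1}+X^2)X^\eta$ and, after a dyadic argument, yields $\int_{\tau<|x|<K}|S|^4|\Phi|^2\,\mathrm{d}x\ll(X^{4-c}+X^2)X^\eta$. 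This lemma is the key arithmetic input that makes the proposition valid in the full range $1<c<37/18$, and your proposal never uses it; without it the argument you sketch does not close.
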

\subsection{Proof of Proposition~\ref{proposition-exceptional}}
    Throughout the proof of Proposition~\ref{proposition-exceptional}, we always set~$X=(2N/3)^{1/c}$~and denote the
    function~$\Phi(x)$~which is from Lemma~\ref{xiaobei} with parameters
\begin{equation*}
   a=\frac{9\varepsilon}{10},\qquad b=\frac{\varepsilon}{10},\qquad k=[\log X].
\end{equation*}

\noindent
We have
\begin{eqnarray}\label{Except-total}
  &     &   \!\!\!\!   \int_N^{2N}|B_1(R)-H(R)|^2\mathrm{d}R  \nonumber \\
  &   =  & \!\!\!\!  \int_N^{2N}|(D_1-H_1)+D_2+D_3-(H-H_1)|^2\mathrm{d}R   \nonumber \\
  & \ll & \!\!\!\!   \int_N^{2N}  \!\!\!|D_1-H_1|^2\mathrm{d}R+\int_N^{2N} \!\!\! |D_2|^2\mathrm{d}R
                              +\int_N^{2N} \!\!\! |D_3|^2\mathrm{d}R+\int_N^{2N}\!\!\! |H-H_1|^2\mathrm{d}R.
\end{eqnarray}
By Lemma \ref{Van der Corput}, we get $I(x)\ll X^{1-c}|x|^{-1}.$ By Lemma \ref{xiaobei}, we have
\begin{eqnarray} \label{Except-total-4}
   &     &  \int_{N}^{2N}|H-H_1|^2\mathrm{d}R
                 \nonumber \\
   & \ll &  \int_{N}^{2N}\bigg(\int_{|x|>\tau}|I(x)|^3 |\Phi(x)|\mathrm{d}x\bigg)^2\mathrm{d}R
                 \nonumber \\
   & \ll &  \varepsilon^2  N\bigg(\int_{|x|>\tau}|I(x)|^3\mathrm{d}x\bigg)^2
                 \nonumber \\
   & \ll &  \varepsilon^2 NX^{6-6c}\bigg(\int_{\tau}^{\infty}\frac{\mathrm{d}x}{x^3}\bigg)^2
            \ll \varepsilon^2 N \frac{X^{6-6c}}{\tau^4}.
\end{eqnarray}

  For the third term on the right hand in (\ref{Except-total}), we have
\begin{eqnarray}\label{Except-total-3}
  \int_N^{2N}|D_3|^2\mathrm{d}R
      & \ll & \int_{N}^{2N}\bigg(\int_K^{+\infty}|S(x)|^3|\Phi(x)|\mathrm{d}x\bigg)^2 \mathrm{d}R
                       \nonumber \\
      & \ll & N\Bigg(\int_K^{+\infty}|S(x)|^3|\Phi(x)|\mathrm{d}x\Bigg)^2
                       \nonumber \\
      & \ll & NX^6\bigg(\int_K^{+\infty}\bigg(\frac{5k}{\pi x\varepsilon}\bigg)^k\frac{\mathrm{d}x}{x}\bigg)^2
                       \nonumber \\
      & \ll & NX^6\bigg(\frac{5k}{\pi K\varepsilon}\bigg)^{2k}
               \ll N\frac{X^{6+2\log(5/\pi)}}{X^{4\log X}} \ll N.
\end{eqnarray}

  Take $\Omega_1=\{R:N<R\leqslant 2N\},\,\Omega_2=\{x:\tau<|x|<K\},\, \xi=S^3(x)\Phi(x),\, \omega(x,R)=e(Rx),\,c(R)=\overline{D_2(R)}.$ Then from Lemma \ref{Max-value-lemma} and Lemma \ref{Functional-lemma}, we obtain
\begin{eqnarray}
   \int_{N}^{2N}|D_2(R)|^2\mathrm{d}R & \leqslant & 2A\int_{\tau}^{K}|S(x)|^6|\Phi(x)|^2\mathrm{d}x
                 \nonumber \\
   & \ll & \mathscr{L}\cdot\max_{\tau\leqslant x\leqslant K}|S(x)|^2
           \times\int_{\tau}^K|S(x)|^4|\Phi(x)|^2\mathrm{d}x.
\end{eqnarray}
By the first derivative test, we have
\begin{eqnarray}
    &     &   \int_{\tau}^K |S(x)|^4|\Phi(x)|^2\mathrm{d}x
               \ll   \varepsilon^2\int_{\tau}^K |S(x)|^4\mathrm{d}x
                    \nonumber \\
    &  =  &   \varepsilon^2\sum_{\frac{X}{2}<p_1,p_2,p_3,p_4\leqslant X}(\log p_1)\cdots(\log p_4)
              \int_\tau^Ke\left((p_1^c+p_2^c-p_3^c-p_4^c)x\right)\mathrm{d}x
                      \nonumber \\
    & \ll &   \varepsilon^2 \log^4X \sum_{\frac{X}{2}<p_1,p_2,p_3,p_4\leqslant X}
              \min\bigg(K,\frac{1}{|p_1^c+p_2^c-p_3^c-p_4^c|}\bigg)
                      \nonumber \\
    & \ll &   \sum_{\frac{X}{2}<n_1,n_2,n_3,n_4\leqslant X}\min\bigg(K,\frac{1}{|n_1^c+n_2^c-n_3^c-n_4^c|}\bigg).
\end{eqnarray}
Let $u=n_1^c+n_2^c-n_3^c-n_4^c.$ By Lemma \ref{Robert-lemma}, the contribution of $K$ is
(notice $|u|\leqslant K^{-1}$)
\begin{equation}\label{3-theorem-K-contribution}
    \ll K\cdot\mathscr{A}(X/2;c,K^{-1})\ll (X^{4-c}+X^2)X^{\eta}.
\end{equation}
By a dyadic argument, the contribution from $n_1,\,n_2,\,n_3,\,n_4$ with $|u|>K^{-1}$ is bounded by
\begin{eqnarray}\label{3-theorem-K-except-contribution}
    &  \ll  &   \log X\times \max_{K^{-1}\leqslant U\ll X^c}
                \sum_{\substack{\frac{X}{2}<n_1,n_2,n_3,n_4\leqslant X\\ U<|u|\leqslant 2U }}\frac{1}{|u|}
                       \nonumber \\
    & \ll &   \log X\times \max_{K^{-1}\leqslant U\ll X^c}
              U^{-1} \cdot\mathscr{A}(X/2;c,2U)
                        \nonumber \\
    & \ll &   \log X\times \max_{K^{-1}\leqslant U\ll X^c} (X^{4-c}+X^2U^{-1})X^\eta
                         \nonumber \\
    & \ll &   (X^{4-c}+X^2)X^\eta.
\end{eqnarray}
Combining (\ref{3-theorem-K-contribution}) and (\ref{3-theorem-K-except-contribution}), we have
\begin{equation}\label{3.9}
  \int_{\tau}^{K}|S(x)|^4|\Phi(x)|^2\mathrm{d}x\ll (X^{4-c}+X^2)X^\eta.
\end{equation}
If $1<c<2$,  then from Lemma \ref{S(x)estimate-1<c<2} we get
\begin{eqnarray}\label{D_2-case-1}
   \int_{N}^{2N}|D_2(R)|^2\mathrm{d}R
   & \ll & \mathscr{L}\cdot\big(X^{(6+c)/4+\eta}+X^{28/15+\eta}\big)(X^{4-c}+X^2)X^\eta
                  \nonumber \\
   & \ll & \big(X^{(6+c)/4+\eta}+X^{28/15+\eta}\big)X^{4-c+\eta}
                  \nonumber \\
   & \ll & X^{11/2-3c/4+\eta}+X^{88/15-c+\eta}
                    \nonumber \\
   & \ll & N^{11/(2c)-3/4+\eta}+N^{88/(15c)-1+\eta}
                    \nonumber \\
   & \ll & \varepsilon^2N^{6/c-1}\mathscr{L}^{6}E^{2/3}  .
\end{eqnarray}
If~$2<c<37/18,$~then then from Lemma \ref{S(x)estimate-2<c<2.025} we get
\begin{eqnarray}\label{D_2-case-2}
                 \int_{N}^{2N}|D_2(R)|^2\mathrm{d}R
       & \ll &   \mathscr{L}\cdot X^{4-c-2\eta}(X^{4-c}+X^2)X^\eta
                     \nonumber \\
       & \ll &   \mathscr{L}\cdot X^{4-c-2\eta}\cdot X^{2+\eta} \ll X^{6-c-\eta}
                     \nonumber \\
       & \ll &    N^{6/c-1-\eta} \ll \varepsilon^2N^{6/c-1}\mathscr{L}^{6}E^{2/3}.
\end{eqnarray}
Combining (\ref{D_2-case-1}) and (\ref{D_2-case-2}), for~$1<c<37/18,\,c\neq2,$~we obtain that
\begin{equation}\label{Except-total-2}
    \int_{N}^{2N}|D_2(R)|^2\mathrm{d}R \ll \varepsilon^2N^{6/c-1}\mathscr{L}^{6}E^{2/3}.
\end{equation}
Next, we consider the first term on the right hand in (\ref{Except-total}). First of all, one has
\begin{eqnarray*}
   &    &   |D_1(R)-H_1(R)|^2   \nonumber \\
   &  = &   \int_{-\tau}^\tau\left(\overline{S(x)}^3-\overline{I(x)}^3\right)e(Rx)\overline{\Phi(x)}\mathrm{d}x
          \times \int_{-\tau}^\tau \left(S^3(y)-I^3(y)\right)e(-R y)\Phi(y)\mathrm{d}y
                          \nonumber \\
   & = &  \int_{-\tau}^\tau\left(\overline{S(x)}^3-\overline{I(x)}^3\right)\overline{\Phi(x)}
         \bigg(\int_{-\tau}^\tau\left(S^3(y)-I^3(y)\right)e\big(R(x- y)\big)\Phi(y)\mathrm{d}y\bigg)\mathrm{d}x.
\end{eqnarray*}
Therefore, we have
\begin{eqnarray} \label{first-term-1}
 &     &     \int_{N}^{2N}|D_1(R)-H_1(R)|^2\mathrm{d}R   \nonumber \\
 & = &     \int_{N}^{2N}\bigg[\int_{-\tau}^\tau\left(\overline{S(x)}^3-\overline{I(x)}^3\right)\overline{\Phi(x)}
                     \nonumber \\
   &  & \qquad\times \bigg(\int_{-\tau}^\tau\left(S^3(y)-I^3(y)\right)e\big(R(x- y)\big)\Phi(y)\mathrm{d}y\bigg)\mathrm{d}x
               \bigg]\mathrm{d}R
                       \nonumber \\
 & = & \int_{-\tau}^\tau\left(\overline{S(x)}^3-\overline{I(x)}^3\right)\overline{\Phi(x)}
                      \nonumber \\
 &   & \qquad\times\bigg[\int_{-\tau}^\tau\left(S^3(y)-I^3(y)\right)\bigg(\int_{N}^{2N}e(R(x- y))
           \mathrm{d}R\bigg)\Phi(y)\mathrm{d}y\bigg]\mathrm{d}x
                       \nonumber \\
 & \ll &  \int_{-\tau}^\tau|S^3(x)-I^3(x)||\Phi(x)|
                       \nonumber \\
 &  & \qquad\times \bigg(\int_{-\tau}^\tau|S^3(y)-I^3(y)||\Phi(y)|\min\bigg(N,\frac{1}{|x-y|}\bigg)\mathrm{d}y
         \bigg)\mathrm{d}x.
\end{eqnarray}

   Applying Cauchy's inequality to the inner integral and combining Lemma \ref{S(x),I(x)-4-mean-square}, one has
\begin{eqnarray}\label{first-term-2}
   &   &    \int_{-\tau}^\tau\left|S^3(y)-I^3(y)\right||\Phi(y)|\min\bigg(N,\frac{1}{|x-y|}\bigg)\mathrm{d}y
                   \nonumber \\
   & \ll &  \varepsilon \int_{-\tau}^\tau\left|S(y)-I(y)\right|\left|S^2(y)+S(y)I(y)+I^2(y)\right|
             \min\bigg(N,\frac{1}{|x-y|}\bigg)\mathrm{d}y
                     \nonumber \\
   & \ll &  \varepsilon\bigg(\int_{-\tau}^\tau \left|S^2(y)+S(y)I(y)+I^2(y)\right|^2\mathrm{d}y \bigg)^{1/2}
                     \nonumber \\
   &     &  \qquad \times \bigg( \int_{-\tau}^\tau\left|S(y)-I(y)\right|^2
              \min\bigg(N,\frac{1}{|x-y|}\bigg)^2\mathrm{d}y  \bigg)^{1/2}
                     \nonumber \\
   & \ll &  \varepsilon \bigg(\int_{-\tau}^\tau|S(y)|^4 \mathrm{d}y +\int_{-\tau}^\tau|I(y)|^4
                   \mathrm{d}y  \bigg)^{1/2}
                      \nonumber \\
   &     &  \qquad \times \bigg( \int_{-\tau}^\tau\left|S(y)-I(y)\right|^2
              \min\bigg(N,\frac{1}{|x-y|}\bigg)^2\mathrm{d}y  \bigg)^{1/2}
                       \nonumber \\
   &  \ll & \varepsilon X^{2-c/2}\mathscr{L}^{5/2}\bigg( \int_{-\tau}^\tau\left|S(y)-I(y)\right|^2
              \min\bigg(N,\frac{1}{|x-y|}\bigg)^2\mathrm{d}y  \bigg)^{1/2}.
\end{eqnarray}
Put (\ref{first-term-2}) into (\ref{first-term-1}) and we get
\begin{eqnarray}\label{first-term-3}
    &     &    \int_{N}^{2N}|D_1(R)-H_1(R)|^2\mathrm{d}R
                        \nonumber \\
    & \ll &   \varepsilon^{3/2}X^{2-c/2}\mathscr{L}^{5/2}\int_{-\tau}^{\tau}|S^3(x)-I^3(x)||\Phi(x)|^{1/2}
                       \nonumber \\
    &     &   \qquad \times \bigg( \int_{-\tau}^\tau\left|S(y)-I(y)\right|^2
              \min\bigg(N,\frac{1}{|x-y|}\bigg)^2\mathrm{d}y  \bigg)^{1/2}\mathrm{d}x
                       \nonumber \\
    & \ll &   \varepsilon^{3/2}X^{2-c/2}\mathscr{L}^{5/2}
              \sup_{|x|\leqslant \tau}\bigg(\int_{-\tau}^\tau\left|S(y)-I(y)\right|^2
               \min\bigg(N,\frac{1}{|x-y|}\bigg)^2|\Phi(x)|\mathrm{d}y\bigg)^{1/2}
                       \nonumber \\
    &     &    \qquad \times \int_{-\tau}^\tau |S^3(x)-I^3(x)|\mathrm{d}x.
\end{eqnarray}
On one hand, by Lemma \ref{square-mean-value}, we have
\begin{eqnarray}\label{first-term-4}
     \int_{-\tau}^{\tau} |S^3(x)-I^3(x)|\mathrm{d}x
  &  \leqslant &    \int_{-\tau}^{\tau} |S(x)|^3 \mathrm{d}x+ \int_{-\tau}^{\tau} |I(x)|^3 \mathrm{d}x
                         \nonumber \\
  & \ll &    X\int_{-\tau}^{\tau} |S(x)|^2\mathrm{d}x+ X\int_{-\tau}^{\tau} |I(x)|^2 \mathrm{d}x
                         \nonumber \\
  & \ll &     X^{3-c}\mathscr{L}^3.
\end{eqnarray}
On the other hand, by Lemma \ref{S(x)=I(x)+error}, we have
\begin{eqnarray}\label{first-term-5}
   &     &   \int_{-\tau}^{\tau} \left|S(y)-I(y)\right|^2 \min\bigg(N,\frac{1}{|x-y|}\bigg)^2|\Phi(x)|\mathrm{d}y
                             \nonumber \\
   & \ll &   \varepsilon N^2\int_{y\in\left(x-\frac{P}{N},\,x+\frac{P}{N}\right)\cap[-\tau,\tau]}
             \left|S(y)-I(y)\right|^2\mathrm{d}y
     +\varepsilon\frac{N^2}{P^2}\int_{-\tau}^{\tau}\left|S(y)-I(y)\right|^2\mathrm{d}y
                             \nonumber \\
   & \ll &   \varepsilon N^2X^{2-c}PE^2+\varepsilon\frac{N^2}{P^2}X^{2-c}\mathscr{L}^3
                              \nonumber \\
   & \ll &   \varepsilon N^2X^{2-c}E^{4/3}\mathscr{L}^3.
\end{eqnarray}
Combining (\ref{first-term-3}), (\ref{first-term-4}) and (\ref{first-term-5}) we obtain
\begin{equation}\label{Except-total-1}
    \int_{N}^{2N}|D_1(R)-H_1(R)|^2\mathrm{d}R   \ll\varepsilon^2 N^{6/c-1}\mathscr{L}^7 E^{2/3}.
\end{equation}
 From (\ref{Except-total}), (\ref{Except-total-4}), (\ref{Except-total-3}), (\ref{Except-total-2}) and
  (\ref{Except-total-1}), we know that the conclusion of Proposition \ref{proposition-exceptional} follows.

   \subsection{Proof of Theorem~\ref{Theorem-exceptional}}
For~$R\in(N,2N],$~we set
\begin{equation*}
   B:=B(R)=\sum_{\substack{\frac{X}{2}<p_1,p_2,p_3\leqslant X\\|p_1^c+p_2^c+p_3^c-R|<\varepsilon}}
   (\log p_1)(\log p_2)(\log p_3).
\end{equation*}
From Proposition~\ref{proposition-exceptional}, we can claim that if~$1<c<37/18,\,c\neq2,$~there exists a set~$\mathfrak{P}\subset(N,2N]$~satisfying
 \begin{equation}\label{ex-p-bound}
      |\mathfrak{P}|\ll N\exp\bigg(-\frac{2}{15}\bigg(\frac{1}{c}\log\frac{2N}{3}\bigg)^{1/5}\bigg),
  \end{equation}
such that
\begin{equation*}
   B_1(R)=H(R)+O\Bigg(\varepsilon N^{\frac{3}{c}-1}\exp\bigg(-\frac{1}{10}\bigg(\frac{1}{c}\log\frac{2N}{3}\bigg)^{1/5}\bigg)\Bigg)
\end{equation*}
for all~$R\in (N,2N]\setminus\mathfrak{P}.$~

  Actually, from Proposition~\ref{proposition-exceptional}, for~$R\in\mathfrak{P},$~we have
\begin{equation}  \label{B_1(R)-H(R)}
   B_1(R)-H(R)\gg\varepsilon
          N^{\frac{3}{c}-1}\exp\bigg(-\frac{1}{10}\bigg(\frac{1}{c}\log\frac{2N}{3}\bigg)^{1/5}\bigg).
\end{equation}
 Therefore, we get
 \begin{eqnarray*}
     &  &  \varepsilon^2N^{6/c-1}
           \exp\left(-\frac{1}{3}\left(\frac{1}{c}\log\frac{2N}{3}\right)^{1/5}\right)   \nonumber \\
       & \gg & \int_{N}^{2N} |B_1(R)-H(R)|^2\mathrm{d}R   \nonumber \\
       & \gg & \int_{\mathfrak{P}} |B_1(R)-H(R)|^2\mathrm{d}R   \nonumber \\
       & \gg & |\mathfrak{P}|\cdot
                \varepsilon^2N^{6/c-2}
                \exp\left(-\frac{1}{5}\left(\frac{1}{c}\log\frac{2N}{3}\right)^{1/5}\right),
\end{eqnarray*}
 and~(\ref{ex-p-bound})~follows.

     As in \cite{D. I. Tolev}, by the Fourier transformation formula, we have
  \begin{eqnarray*}
     B_1(R) & = & \sum_{\frac{X}{2}<p_1,\,p_2,\,p_3\leqslant X}\log p_1\cdot\log p_2\cdot\log p_3
                  \cdot\int_{-\infty}^{\infty}e\big((p_1^c+p_2^c+p_3^c-R)x\big)\Phi(x)\mathrm{d}x      \\
     & = & \sum_{\frac{X}{2}<p_1,\,p_2,\,p_3\leqslant X}\log p_1\cdot\log p_2\cdot\log p_3
           \cdot \varphi\left(p_1^c+p_2^c+p_3^c-R\right)  \leqslant B(R).
  \end{eqnarray*}

 Hence Theorem \ref{Theorem-exceptional} follows from the inequality
 \begin{equation*}
    H(R)\gg\varepsilon R^{3/c-1},
 \end{equation*}
which can be proved proceeding as in \cite{D. I. Tolev}, Lemma 6. This completes the proof of Theorem~\ref{Theorem-exceptional}.

\section{Proof of Theorem~\ref{Theorem-1} }

  Throughout the proof of Theorem~\ref{Theorem-1}, we  we always set~$X=(N/5)^{1/c}$~and denote the
  function~$\varphi(y)$~which is from Lemma \ref{xiaobei} with
parameters
\begin{equation*}
   a=\frac{9\varepsilon}{10},\qquad b=\frac{\varepsilon}{10},\qquad k=[\log X].
\end{equation*}
Let
\begin{equation*}
   \mathscr{B}=\sum_{\substack{\frac{X}{2}<p_1,\,\cdots\,,p_6\leqslant X \\ |p_1^c+\cdots+p_6^c-N|<\varepsilon}}
   (\log p_1)(\log p_2)\cdots(\log p_6).
\end{equation*}
Set
\begin{equation*}
   \mathscr{B}_1=\sum_{\frac{X}{2}<p_1,p_2,\cdots,p_6\leqslant X}(\log p_1)(\log p_2)\cdots(\log p_6)
   \cdot\varphi(p_1^c+\cdots+p_6^c-N).
\end{equation*}
By the definition of~$\varphi,$~we have
\begin{equation}\label{B>B_1}
       \mathscr{B}\geqslant  \mathscr{B}_1.
\end{equation}
The Fourier transformation formula gives
\begin{eqnarray}\label{B_1-fenjie}
    \mathscr{B}_1 & = & \int_{-\infty}^{+\infty}S^6(x)e(-Nx)\Phi(x)\mathrm{d}x
                            \nonumber  \\
                  & =: & \mathscr{D}_1+\mathscr{D}_2+\mathscr{D}_3,
\end{eqnarray}
where
\begin{eqnarray*}
   &  &  \mathscr{D}_1=\int_{-\tau}^{\tau}    S^6(x)e(-Nx)\Phi(x)\mathrm{d}x, \\
   &  &  \mathscr{D}_2=\int_{\tau<|x|<K}      S^6(x)e(-Nx)\Phi(x)\mathrm{d}x, \\
   &  &  \mathscr{D}_3=\int_{|x|\geqslant K}  S^6(x)e(-Nx)\Phi(x)\mathrm{d}x.
\end{eqnarray*}
By Lemma \ref{xiaobei}, we have
\begin{equation}\label{D_3-upper-bound}
   \mathscr{D}_3\ll\int_{K}^{+\infty}|S(x)|^6|\Phi(x)|\mathrm{d}x\ll X^6\int_{K}^{+\infty}\frac{1}{x}
   \left(\frac{5k}{\pi x\varepsilon}\right)^k\mathrm{d}x\ll1.
\end{equation}
Let
 $$\mathscr{H}_1=\int_{-\tau}^{\tau}I^{6}(x)e(-Nx)\Phi(x)\mathrm{d}x$$
  and
  $$\mathscr{H}=\int_{-\infty}^{+\infty}I^{6}(x)e(-Nx)\Phi(x)\mathrm{d}x,$$
  then
\begin{equation} \label{D_1-fenjie}
  \mathscr{D}_1=\mathscr{H}+(\mathscr{H}_1-\mathscr{H})+(\mathscr{D}_1-\mathscr{H}_1).
\end{equation}
By Lemma \ref{Van der Corput}, we get $I(x)\ll X^{1-c}|x|^{-1}.$ By Lemma~\ref{xiaobei}, we have
\begin{eqnarray}\label{H_1-H}
   \mathscr{H}_1-\mathscr{H} & \ll & \int_{\tau}^{+\infty}|I(x)|^6|\Phi(x)|\mathrm{d}x
                    \nonumber \\
         & \ll & X^{6-6c}\int_{\tau}^{+\infty}\frac{\mathrm{d}x}{x^7}
                     \nonumber \\
         & \ll & X^{6-6c}\tau^{-6}\ll X^{6\eta}.
\end{eqnarray}
According to Lemma~\ref{xiaobei},~Lemma~\ref{square-mean-value}~and Lemma~\ref{S(x)ti-huan-I(x)}, we have
\begin{eqnarray}\label{D_1-H_1}
    \mathscr{D}_1-\mathscr{H}_1 & \ll &   \int_{-\tau}^{\tau}\left|S^6(x)-I^6(x)\right|\left|\Phi(x)\right|\mathrm{d}x
                           \nonumber \\
            & \ll &   \varepsilon \cdot\max_{|x|\leqslant\tau}\left|S(x)-I(x)\right|\times\int_{-\tau}^{\tau}
                      \left(\left|S(x)\right|^5+\left|I(x)\right|^5\right)\mathrm{d}x
                             \nonumber \\
            & \ll &   \varepsilon X^3\cdot\max_{|x|\leqslant\tau}\left|S(x)-I(x)\right|\times\int_{-\tau}^{\tau}
                      \left(\left|S(x)\right|^2+\left|I(x)\right|^2\right)\mathrm{d}x
                           \nonumber \\
            & \ll &   \varepsilon X^{5-c}\log^3X\times\max_{|x|\leqslant\tau}\left|S(x)-I(x)\right|
                            \nonumber \\
            & \ll &    \varepsilon X^{6-c}e^{-\frac{1}{2}\left(\log X\right)^{1/5}}.
\end{eqnarray}
So Lemma \ref{6-power-zhuxiang-low-bound} combining (\ref{D_1-fenjie}), (\ref{H_1-H}) and (\ref{D_1-H_1}) yields
\begin{equation}\label{D_1-low-bound}
   \mathscr{D}_1\gg \varepsilon X^{6-c}.
\end{equation}
 For $\mathscr{D}_2$, we have
\begin{equation}\label{D_2-estimate}
   \mathscr{D}_2\ll \max_{\tau<|x|<K}\left|S(x)\right|^2\times
                \int_{\tau<|x|<K}\left|S(x)\right|^4\left|\Phi(x)\right|\mathrm{d}x.
\end{equation}
For the integral on the right hand in (\ref{D_2-estimate}), we can exactly follow the process of~(\ref{3.9}) and obtain
\begin{equation}\label{D-2-S^4-Phi}
   \int_{\tau<|x|<K}\left|S(x)\right|^4\left|\Phi(x)\right|\mathrm{d}x
    \ll(X^{4-c}+X^2)X^{3\eta}.
\end{equation}
If $1<c<2,$ then from (\ref{D_2-estimate}), Lemma \ref{S(x)estimate-1<c<2} and (\ref{D-2-S^4-Phi}) we get
\begin{equation}\label{D_2-upper-1}
   \mathscr{D}_2\ll (X^{(6+c)/4+\eta}+X^{28/15+\eta})(X^{4-c}+X^{2})X^\eta\ll X^{6-c-\eta}.
\end{equation}
If $2<c<37/18,$ then from (\ref{D_2-estimate}), Lemma \ref{S(x)estimate-2<c<2.025} and (\ref{D-2-S^4-Phi}) we get
\begin{equation}\label{D_2-upper-2}
    \mathscr{D}_2\ll X^{2-2\delta}(X^{4-c}+X^{2})X^\eta\ll X^{2-2\delta}\cdot X^{2+\eta}\ll  X^{6-c-\eta} .
\end{equation}
From (\ref{B>B_1}), (\ref{B_1-fenjie}), (\ref{D_3-upper-bound}), (\ref{D_1-low-bound}), (\ref{D_2-upper-1}) and (\ref{D_2-upper-2}) we get
\begin{equation*}
    \mathscr{B}\geqslant \mathscr{B}_1=\mathscr{D}_1+\mathscr{D}_2+\mathscr{D}_3\gg\varepsilon X^{6-c},
\end{equation*}
which completes the proof of Theorem~\ref{Theorem-1} .

\bigskip
\bigskip

\textbf{Acknowledgement}

   The authors would like to express the most and the greatest sincere gratitude to Professor Wenguang Zhai for his valuable
advice and constant encouragement.


\begin{thebibliography}{99}
   \bibitem {Baker-Weingartner-1} R. Baker and A. Weingartner, \textit{Some applications of the double large sieve},
              Monatsh. Math., \textbf{170} (3/4) (2013), 261--304.

   \bibitem {Baker-Weingartner-2}  R. Baker and A. Weingartner, \textit{A ternary Diophantine inequality over primes}, Acta Arith., \textbf{162} (2014), 159--196.

  \bibitem {Cai-1} Y. C. Cai, \textit{On a Diophantine inequality involving prime numbers}, Acta Math. Sinica,
                 \textbf{39} (1996), 733--742 (in Chinese).

  \bibitem {Cai-2} Y. C. Cai, \textit{On a Diophantine inequality involving prime numbers (II)}, Acta Math. Sinica,
                 \textbf{15} (1999), 387--394.

   \bibitem {Cao} X. D. Cao and W. G. Zhai,  \textit{A Diophantine Inequality with Prime Numbers}, Acta Math. Sinica,
                                                 \textbf{45} (2) (2002), 361--370 (in Chinese).

   \bibitem {Fouvry-Iwaniec} E. Fouvry and H. Iwaniec, \textit{Exponential sums with monomials}, J. Number Theory,
                       \textbf{33} (1989), 311--333.

  \bibitem {Garaev} M. Z. Garaev, \textit{On the Waring-Goldbach problem with small non-integer exponent},
                    Acta Arith., \textbf{108} (2003), 297--302.

  \bibitem {Graham-Kolesnik} S. W. Graham and G. Kolesnik, Van der Corput's Method of Exponential Sums.
                                Cambridge University Press (1991).

  \bibitem {Heath-Brown} D. R. Heath-Brown, \textit{Prime numbers in short intervals and a generalized Vaughan identity}, Canad, J. Math., \textbf{34} (1982), 1365--1377.

  \bibitem {Kumchev} A. Kumchev, \textit{A Diophantine inequality involving prime powers}, Acta Arith.,
                      \textbf{89} (1999), 311--330.

  \bibitem {Kumchev-Nedeva} A. Kumchev and T. Nedeva,  \textit{On an equation with prime numbers}, Acta Arith.,
               \textbf{83} (1998), 117--126.

 \bibitem {Laporta} M. B. S. Laporta, \textit{On a binary diophantine inequality involving prime numbers}, Acta Mathematica Hungarica., \textbf{83} (3) (1999), 179--187.


  \bibitem {Piatetski-Shapiro} I. I. Piatetski-Shapiro, \textit{On a variant of Waring-Goldbach¡¯s problem}, Mat.
                        Sb., \textbf{30} (72) (1) (1952), 105--120 (in Russian).


  \bibitem {Robert-Sargos} O. Robert and P. Sargos, \textit{Three-dimensional exponential sums with monomials}, J.
                            Reine Angew. Math., \textbf{591} (2006), 1--20.

  \bibitem {Segal} B. I. Segal, \textit{On a theorem analogous to Waring¡¯s theorem}, Dokl. Akad. Nauk SSSR
                       (N. S.), \textbf{2} (1933), 47--49 (in Russian).

   \bibitem {Shi-Liu} S. Y. Shi and L. Liu, \textit{On a Diophantine inequality involving prime powers}, Monatsh. Math.,
                              \textbf{169} (3/4) (2013), 423--440.

   \bibitem {Titchmarsh} E.C. Titchmarsh, \textit{The Theory of the Riemann Zeta-Function}. 2nd edn. Oxford University
                                             Press (1986).

  \bibitem {D. I. Tolev} D. I. Tolev, \textit{On a Diophantine inequality involving prime numbers},
                   Acta Arith., \textbf{61} (1992), 289--306.


  \bibitem {I. M. Vinogradov} I. M. Vinogradov, \textit{Representation of an odd number as the sum of three primes}, Dokl. Akad. Nauk SSSR, \textbf{15} (1937), 291--294 (in Russian).

  \bibitem {Zhai-Cao-1} W. G. Zhai and X. D. Cao, \textit{On a Diophantine inequality over primes}, Adv. Math. (China), \textbf{32} (1) (2003), 63--73.

  \bibitem {Zhai-Cao-3}W. G. Zhai and X. D. Cao, \textit{On a binary Diophantine Inequality}, Adv. Math. (China),
                                          \textbf{6} (2003), 706--721.

  \bibitem {Zhai-Cao-2} W. G. Zhai and X. D. Cao, \textit{On a Diophantine inequality over primes (II)}, Monatsh. Math.,
                        \textbf{150} (2) (2007), 173--179.




\end{thebibliography}
\end{document}